\let\mathcal\mathscr
\DeclareRobustCommand{\SkipTocEntry}[5]{} 
\numberwithin{equation}{section}
\newtheorem{theorem}{Theorem}[section]
\newtheorem{lemma}[theorem]{Lemma}
\newtheorem{proposition}[theorem]{Proposition}
 \newcommand{\set}[1]{\left\{#1\right\}}
\newcommand{\bigset}[1]{\bigl\{ #1 \bigr\}}
\newcommand{\Bigset}[1]{\Bigl\{ #1 \Bigr\}}
\newcommand{\biggset}[1]{\biggl\{ #1 \biggr\}}
\newcommand{\abs}[1]{\left| #1\right|}
\newcommand{\bigabs}[1]{\bigl| #1 \bigr|}
\newcommand{\Bigabs}[1]{\Bigl| #1 \Bigr|}
\newcommand{\biggabs}[1]{\biggl| #1 \biggr|}
\newcommand{\Biggabs}[1]{\Biggl| #1 \Biggr|}
\newcommand{\ceil}[1]{\left\lceil #1 \right\rceil}
\newcommand{\floor}[1]{\left\lfloor #1 \right\rfloor}
\newcommand{\bigbrac}[1]{\bigl( #1 \bigr)}
\newcommand{\Bigbrac}[1]{\Bigl( #1 \Bigr)}
\newcommand{\biggbrac}[1]{\biggl( #1 \biggr)}
\newcommand{\norm}[1]{\left\| #1\right\|}
\newcommand{\rd}{\,\mathrm{d}}
\newtheorem*{theorema}{Theorem A}
\newtheorem*{theoremb}{Theorem B}
\newtheorem*{theoremc}{Theorem C}
\newtheorem*{theoremd}{Theorem D}
\newtheorem*{theoreme}{Theorem E}
\newtheorem*{theoremf}{Theorem F}
\newcommand{\threesum}[3]{\sum_{\substack{#1\\ #2\\ #3}}}
\newcommand{\Z}{\mathbb{Z}}
\newcommand{\C}{\mathbb{C}}
\newcommand{\T}{\mathbb{T}}
\newcommand{\E}{\mathbb{E}}
\renewcommand{\P}{\mathcal P}
\newcommand{\R}{\mathcal R}
\newcommand{\major}{\mathfrak M}
\newcommand{\minor}{\mathfrak m}
\newcommand{\majors}{\mathfrak M^*}
\newcommand{\minors}{\mathfrak m^*}
\newcommand{\eps}{\varepsilon}
\newcommand{\A}{\mathcal A}
\renewcommand{\bar}{\overline}
\renewcommand{\hat}{\widehat}
\title[Sparse Furstenberg-S\'ark\"ozy Configurations]
{A quantitative bound on Furstenberg-S\'ark\"ozy patterns with shifted prime power common differences in primes}
\author{Mengdi Wang}
\address{Institute of Mathematics, \'Ecole Polytechnique F\'ed\'erale de Lausanne (EPFL), CH-1015 Lausanne, Switzerland}
\email{mengdi.wang@epfl.ch}
\subjclass[2000]{Primary 11B05; Secondary 11B30 11L07 11J55}
\begin{document}

\begin{abstract}
Let $k\geq1$ be a fixed integer and $\mathcal P_N$ be the set of primes no more than $N$. We prove that if a set  $\mathcal A\subset\mathcal P_N$ contains no patterns $p_1,p_1+(p_2-1)^k$, where $p_1,p_2$ are prime numbers, then
\[
\frac{|\mathcal A|}{|\mathcal P_N|}\ll\bigbrac{\log\log N}^{-\frac{1}{4k^3+23k^2}}.
\]
\end{abstract}

\maketitle

%\setcounter{tocdepth}{1}
%\tableofcontents

\section{Introduction}

Lov\'asz conjectured that any integer set without non-zero perfect square differences must have asymptotic density zero. This conjecture was confirmed by  Furstenberg \cite{Fur} and S\'ark\"ozy \cite{S781} independently. Furstenberg used ergodic theory and his result is  a purely qualitative one. Instead,  using the Fourier-analytic density increment method, S\'ark\"ozy proved the following quantitative strengthening.
\begin{theorema}[S\'ark\"ozy \cite{S781}]
If $\A$ is a subset of $[N]=\set{1,\dots,N}$ and lacks non-trivial patterns $x,x+y^2(y\neq0)$, then
\[
\frac{|\A|}{N}\ll\Bigbrac{\frac{(\log\log N)^2}{\log N}}^{1/3}.
\]
\end{theorema}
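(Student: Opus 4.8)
The plan is to deploy the Fourier-analytic density-increment scheme that is by now standard for S\'ark\"ozy-type problems. Suppose $\A\subseteq[N]$ has $|\A|=\delta N$ and contains no pair $x,\,x+y^2$ with $y\neq0$; we may assume $\delta$ exceeds the asserted bound, so in particular $\delta\gg(\log N)^{-1}$. Fix a length parameter $L$ (to be optimised, morally $L\asymp N^{1/2}$ up to technical adjustments) and form the counting quantity
\[
\Lambda:=\sum_{\ell=1}^{L}\,\sum_{n}\mathbf{1}_{\A}(n)\,\mathbf{1}_{\A}(n+\ell^2),
\]
the inner sum over $n$ being restricted so that $n,n+\ell^2\in[N]$. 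Since $\ell^2$ with $1\le\ell\le L$ is always a non-zero perfect square, the hypothesis forces $\Lambda=0$. The circle method then gives $\Lambda=\int_0^1|\hat f(\theta)|^2 S(\theta)\,\rd\theta$, where $\hat f(\theta)=\sum_{n\in\A}e(n\theta)$ and $S(\theta)=\sum_{\ell\le L}e(\ell^2\theta)$.

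The next task is to extract a main term. After a routine preliminary reduction to the case that $\A$ is not abnormally concentrated on any long subinterval --- such a concentration would itself produce a density increment onto that subinterval, which can happen only $O(\log(1/\delta))$ times --- one shows, using $S(\theta)\approx L$ near $0$, that the principal arc contributes $\int_{\major_0}|\hat f(\theta)|^2 S(\theta)\,\rd\theta\gg\delta^2 NL$. Since $\Lambda=0$, this positive main term must be cancelled elsewhere. On the minor arcs $\minor$ (frequencies with no rational approximant of denominator $\le Q$, $Q=Q(\delta)$) Weyl's inequality for the quadratic exponential sum bounds $\sup_{\minor}|S|\ll L^{1-c}$, so Parseval gives $\bigl|\int_{\minor}|\hat f|^2 S\bigr|\le(\sup_{\minor}|S|)\,|\A|\ll L^{1-c}\delta N\ll\delta^2 NL$ in the relevant range of $\delta$. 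Therefore
\[
\sum_{2\le q\le Q}\ \sum_{\substack{1\le a\le q\\(a,q)=1}}\Bigl|\int_{\major_{a/q}}|\hat f(\theta)|^2 S(\theta)\,\rd\theta\Bigr|\ \gg\ \delta^2 NL.
\]
Expanding $S$ on each non-principal major arc via its Gauss-sum approximation ($|S(a/q+\beta)|\ll q^{-1/2}L\min(1,(L^2|\beta|)^{-1/2})$) and using that $\hat f$ varies slowly across so short an arc, this forces a large Fourier coefficient: there are $q\in[2,Q]$ and $a$ with $(a,q)=1$ such that $|\hat f(a/q)|\gg\delta N\,Q^{-3/4+o(1)}$.

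A large Fourier coefficient converts into a density increment on a progression with \emph{square} common difference. Partitioning $[N]$ into residue classes $P_r$ mod $q$ and using $\sum_r(|\A\cap P_r|-\delta|P_r|)=0$, some $P_r$ has density at least $\delta+c\delta Q^{-3/4+o(1)}$; passing to one of its $q$ subprogressions of common difference $q^2$ gives an arithmetic progression $P\subseteq[N]$ of common difference the perfect square $q^2$ with $|P|\gg N Q^{-2}$ and $|\A\cap P|\ge(\delta+c\delta Q^{-3/4+o(1)})|P|$. Rescaling $P$ to $\{1,\dots,|P|\}$ makes $\A\cap P$ a subset of $[N']$, $N'=|P|$, of density at least $\delta+c\delta Q^{-3/4+o(1)}$, and it is still free of square differences because any difference in $P-P\subseteq q^2\Z$ is a perfect square only if it is $q^2$ times one. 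Iterating with $Q\asymp\delta^{-4}$ --- so each step raises the density by $\gg\delta^{4+o(1)}$ and multiplies the length by $\delta^{O(1)}$ --- drives the density above $1$ after $O(\delta^{-3+o(1)})$ steps unless $N$ has already fallen below an absolute constant; the total loss incurred in $\log N$ is $O\bigl(\delta^{-3}(\log(1/\delta))^{O(1)}\bigr)$, and rearranging this inequality (tracking the logarithmic factors) yields $\delta\ll\bigl((\log\log N)^2/\log N\bigr)^{1/3}$.

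I expect the main obstacle to lie in the quantitative balance of the last two steps. The minor-arc estimate wants $Q$ to be a genuine power of $L$ so that Weyl's inequality saves a power of $L$, whereas the density increment wants $Q$ as small as possible so that descending to common difference $q^2$ costs little length; reconciling these is what pins down the exponent. One must also control $S$ across the \emph{entire} range of Dirichlet denominators, including those close to $L^2$, where the quadratic Weyl sum is only mildly smaller than $L$ --- this is handled by a mean-value ($L^4$) bound for $S$ rather than by pointwise Weyl. Finally, obtaining the precise shape $\bigl((\log\log N)^2/\log N\bigr)^{1/3}$ rather than merely $(\log N)^{-\Omega(1)}$ amounts to careful bookkeeping of the logarithmic losses accumulated over the $\delta^{-3+o(1)}$ iterations; the reduction to a well-distributed $\A$ needed for the main term is a standard but unavoidable preliminary.
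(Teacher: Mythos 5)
Your outline is correct and follows essentially the same route as the source it is quoted from: Theorem A is cited in the paper from S\'ark\"ozy \cite{S781} rather than proved, and both the original argument and the paper's own adaptation to primes (Sections 5--6) are exactly this Fourier-analytic density-increment scheme --- circle-method count of $x,x+y^2$, a large Fourier coefficient at a rational $a/q$ with $q\le Q=\delta^{-O(1)}$, descent to a progression of common difference $q^2$, and iteration. The quantitative bookkeeping you sketch ($Q\asymp\delta^{-O(1)}$, increment $\delta^{O(1)}$ per step, $\delta^{-3+o(1)}$ steps, logarithmic losses giving the $(\log\log N)^2$ factor) is consistent with the stated bound.
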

Later Prendiville \cite{Pre} generalized S\'ark\"ozy's argument to homogeneous polynomials, and he shows that sets without homogeneous polynomial Szemer\'edi configurations have double logarithmic decay.
Pintz, Steiger and Szemer\'edi \cite{PSS} created a double iteration strategy which refined S\'ark\"ozy's proof and improved the above upper bound to
\[
\frac{|\A|}{N}\ll(\log N)^{-\frac{1}{12}\log\log\log\log N}.
\]
Subsequently, this method has been generalized to deal with sets without differences of the form $n^k(k\geq3)$ by Balog, Pelik\'an, Pintz and Szemer\'edi \cite{BPPS}. They proved that for any integer $k\geq3$,  if $\A\subset[N]$ lacks non-trivial patterns $x,x+y^k(y\neq0)$, then
\[
\frac{|\A|}{N}\ll(\log N)^{-\frac{1}{4}\log\log\log\log N}.
\]

Recently, Bloom and Maynard gained a much more efficient density increment argument by showing that no set $\A$ can have many large Fourier coefficients which are rationals with small and distinct denominators. Taking  advantage of this density increment argument they  proved
\begin{theoremb}[Bloom-Maynard \cite{BM}]
If $\A\subset[N]$ lacks non-trivial patterns $x,x+y^2$ with $y\neq0$, then
\[
\frac{|\A|}{N}\ll(\log N)^{-c\log\log\log N}	
\]
for some absolute constant $c>0$.
\end{theoremb}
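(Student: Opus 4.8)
The plan is to run the classical Fourier-analytic density-increment scheme of S\'ark\"ozy, feeding it Bloom and Maynard's sharper structural dichotomy in place of the single-frequency step, and organising the iteration in the Pintz-Steiger-Szemer\'edi double-loop fashion. Let $\A \subseteq [N]$ with $|\A| = \alpha N$ and suppose $\A$ contains no pair $x, x+y^2$ with $y \neq 0$. Put $Y = \lfloor \sqrt N/10\rfloor$ and consider $T = \sum_n \sum_{1 \le m \le Y} 1_\A(n)\,1_\A(n+m^2)$, which vanishes by hypothesis. Expanding via Fourier, $T = \int_\T |\widehat{1_\A}(\theta)|^2\, S(\theta)\,d\theta$ with $S(\theta) = \sum_{m\le Y} e(m^2\theta)$; the contribution of $\theta$ near $0$ is $\gg \alpha^2 N^2 Y$, so $T = 0$ forces cancellation of that size from the rest of $\T$. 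Fixing $Q = (\log N)^{C}$ with $C$ large, Weyl's inequality bounds $|S(\theta)| \ll Y(\log N)^{-C/3}$ off the major arcs (the $O(1/(qY))$-neighbourhoods of rationals $a/q$ with $q \le Q$), so the minor-arc part of $T$ is $\ll \alpha N^{2} Y (\log N)^{-C/3}$; this is negligible against $\alpha^{2} N^{2} Y$ unless $\alpha < (\log N)^{-C/3}$, a case in which Theorem~B already holds. Hence the major arcs carry mass $\gg \alpha^2 N^2 Y$.

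Evaluating the arc at $a/q$ with the Gauss sum $g(a,q) = \sum_{r\bmod q} e(ar^2/q)$ (so $|g(a,q)| \ll \sqrt q$) converts this into a lower bound
\[
\sum_{q \le Q}\frac{1}{\sqrt q}\Bigl(\;\sup_{(a,q)=1}\ \sup_{|\beta|\le q/Y}\,|\widehat{1_\A}(a/q+\beta)|\Bigr)^{2} \;\gg\; \alpha^{2} N^{2}.
\]
After a dyadic pigeonhole this gives a threshold $\rho = \rho(\alpha)$ and a set $\mathcal Q$ of denominators $\le Q$, not too small in a suitable weighted sense, such that $\A$ has a Fourier coefficient of size $\ge \rho\alpha N$ near some $a_q/q$ for every $q \in \mathcal Q$. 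The crucial new input, due to Bloom and Maynard \cite{BM}, is that a dense set cannot do this for many distinct small denominators: being biased modulo many (say pairwise coprime) moduli simultaneously forces concentration on a set of density $\lesssim \prod_{q\in\mathcal Q}q^{-1}$, whence $|\mathcal Q| \ll \rho^{-O(1)}\log(1/\alpha)$. Read contrapositively and made quantitative, the presence of such a $\mathcal Q$ produces a sub-progression $P$ with \emph{square} common difference $d$, of length $|P| \gg N Q^{-O(1)}$, on which $|\A\cap P|/|P| \ge \alpha\bigl(1 + c\,\rho^{O(1)}|\mathcal Q|\bigr)$.

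The gain descends: if $P = \{x_0 + dt : 0 \le t < M\}$ with $d = q^{2}$, then $\{\,t : x_0 + dt \in \A\,\}$ again avoids non-trivial square differences --- a difference $t - t' = s^{2}$ would give $(qs)^{2}$ inside $\A$ --- so the argument repeats with $(N,\alpha)$ replaced by $(M,\alpha')$. One application of the structural step yields, at best, a constant-factor increase of the density, but at worst only a gain of size $\sim \alpha^{O(1)}$; to avoid $1/\alpha$-many iterations one runs an outer loop over scales, at each scale either cashing in an already large $\mathcal Q$ (a jump, advancing a scale) or accumulating many medium-sized increments while the length has barely changed (the accumulation again forcing a jump). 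This is the Pintz-Steiger-Szemer\'edi double iteration of \cite{PSS,BPPS}, now powered by the more efficient structural step; tracking the losses --- $N \to N/(\log N)^{O(1)}$ per inner step, $(\log N)^{O(1)}$ inner steps per scale, and $\asymp \log\log\log N$ scales, one fewer iterated logarithm than in \cite{PSS,BPPS} --- and unwinding the recursion yields $\alpha \ll (\log N)^{-c\log\log\log N}$.

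The heart of the matter, and the only genuinely new obstacle, is the structural step of the second paragraph: proving that a dense set cannot carry large Fourier mass over rationals with many distinct small denominators, and upgrading this incompatibility from a statement about Bohr-type sets to a density increment on an honest arithmetic progression of perfect-square common difference, in a form uniform and clean enough to survive the double iteration. The circle-method bookkeeping, the descent, and the combinatorial packaging of the iteration are by now routine; the quantitative improvement over \cite{PSS,BPPS} is driven entirely by this step.
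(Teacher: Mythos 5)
Theorem B is not proved in this paper at all: it is quoted, with a citation, from Bloom and Maynard \cite{BM} purely as background, so there is no internal proof to compare your attempt against, and your text has to stand as a free-standing argument. As such it is an outline of the known Bloom--Maynard strategy rather than a proof. The decisive ingredient --- that a set of density $\alpha$ cannot have Fourier coefficients of size $\geq\rho\alpha N$ near rationals with many \emph{distinct} small denominators, in a quantitative form strong enough to survive the iteration, together with the upgrade of that statement to a density increment on a progression of perfect-square common difference --- is exactly the new content of \cite{BM}. You invoke it (``the crucial new input, due to Bloom and Maynard'') and you yourself call it ``the heart of the matter'', but you do not prove it; so the proposal reduces the theorem to a black box which essentially \emph{is} the theorem, with only the classical circle-method shell (which was already in S\'ark\"ozy and Pintz--Steiger--Szemer\'edi) filled in. The one genuinely verified observation, that a sub-progression of common difference $q^2$ inherits the square-difference-free property, is correct but standard.

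Beyond that structural gap, several quantitative assertions would need repair before the sketch could be executed. The main and minor-arc terms carry a spurious factor of $N$: with $Y\asymp\sqrt N$ the expected count is $\asymp\alpha^2NY$, and Parseval gives a minor-arc bound $\ll\alpha NY\sup|S|$, not $\alpha^2N^2Y$ and $\alpha N^2Y(\log N)^{-C/3}$ (harmless to the comparison, but symptomatic). More seriously, taking $Q=(\log N)^C$ with the exit clause ``unless $\alpha<(\log N)^{-C/3}$, a case in which Theorem~B already holds'' is backwards: the target bound $(\log N)^{-c\log\log\log N}$ is far \emph{smaller} than $(\log N)^{-C/3}$ for fixed $C$, so that case cannot be discarded; the arc cutoff must be taken of size at least a power of $1/\alpha$, which is precisely why one must control large Fourier coefficients at denominators up to $\alpha^{-O(1)}$ and why the distinct-denominator theorem is the crux. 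Finally, the claimed bound $|\mathcal Q|\ll\rho^{-O(1)}\log(1/\alpha)$, the increment $\alpha\mapsto\alpha\bigl(1+c\rho^{O(1)}|\mathcal Q|\bigr)$, and the count of ``$\asymp\log\log\log N$ scales'' are asserted rather than derived, and it is exactly this bookkeeping --- not the ``routine'' part --- that decides whether the exponent involves $\log\log\log N$ or only $\log\log\log\log N$; in particular it is not clear that grafting the Bloom--Maynard lemma onto the PSS double iteration in the way you describe reproduces their exponent without redoing their own, different, iteration analysis.
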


In the same series of papers (to study difference sets of subsets of integers), S\'ark\"ozy also answered a question that was posed by Erd\H{o}s, proving that if a subset of integers does not contain two elements that differ by one less than a prime number, then this subset has asymptotic density zero. Assume that $\A$ is a subset of $[N]$, in what follows, we shall adopt $\A-\A$ to denote the difference set $\set{a_1-a_2:a_1,a_2\in \A}$. More formally, S\'ark\"ozy proved that
\begin{theoremc}[S\'ark\"ozy \cite{S783}]
If $\A\subset[N]$ and $p-1\not\in \A-\A$ for all primes $p$, then
\[
\frac{|\A|}{N}\ll\frac{(\log\log\log N)^3\log\log\log\log N}{(\log\log N)^2}.
\]
	
\end{theoremc}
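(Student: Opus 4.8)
The plan is to carry out S\'ark\"ozy's Fourier-analytic density increment, with the distribution of primes in arithmetic progressions supplying the arithmetic input. Write $\delta=|\A|/N$, $e(x)=e^{2\pi ix}$, $F(\theta)=\sum_{n\le N}1_\A(n)e(n\theta)$, let $\Lambda$ be the von Mangoldt function and $G(\theta)=\sum_{\ell\le N}\Lambda(\ell)e(\ell\theta)$. We may assume $\delta$ exceeds the asserted bound, so in particular $\delta\gg(\log N)^{-1}$. By orthogonality,
\[
S:=\int_{\T}|F(\theta)|^2\,e(\theta)\,\overline{G(\theta)}\,\rd\theta=\sum_{\ell\le N}\Lambda(\ell)\cdot\#\bigset{(m,n)\in\A^2:\ m-n=\ell-1},
\]
and since $p-1\notin\A-\A$ for every prime $p$, only prime powers $\ell=p^{j}$ with $j\ge2$ contribute, whence $S\ll N^{3/2}\log N=o(\delta^2N^2)$; the ``expected'' main term, by contrast, is $\sum_{\ell\le N}\Lambda(\ell)\delta^2(N-\ell)\asymp\delta^2N^2$, so the structure of $\A$ must conspire to kill it.

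Set $Q:=C\delta^{-1}$ and dissect $\T$ into the arcs around fractions $a/q$ with $q\le Q$ and their complement. On such an arc, Siegel--Walfisz gives $G(a/q+\beta)=\tfrac{\mu(q)}{\phi(q)}\sum_{\ell\le N}e(\ell\beta)+O(N\exp(-c\sqrt{\log N}))$, while everywhere else $|G(\theta)|\ll N/Q$ (Siegel--Walfisz for the moderate denominators, Vinogradov's exponential-sum estimate for the large ones), so the complement contributes $\ll(N/Q)\|F\|_2^2=\delta N^2/Q\ll\delta^2N^2$ --- it is exactly the calibration $Q\asymp\delta^{-1}$ that makes this error admissible against the main term. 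Hence $S$ equals, up to $o(\delta^2N^2)$, the sum of the model terms over $q\le Q$; the $q=1$ model term is the expected $\asymp\delta^2N^2$ unless $\A$ has a non-trivial Fourier coefficient on a major arc, and since $S=o(\delta^2N^2)$ the terms with $2\le q\le Q$ must cancel it. Bounding $\bigl|\sum_{\ell\le N}e(\ell\beta)\bigr|\le\min(N,\tfrac1{2|\beta|})$ this forces
\[
\sum_{2\le q\le Q}\frac1{\phi(q)}\sum_{(a,q)=1}\ \sup_{|\beta|\le Q/N}|F(a/q+\beta)|^2\ \gg\ \frac{\delta^2N^2}{\log Q}.
\]

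The total weight on the left being $\sum_{2\le q\le Q}1\asymp Q$, some frequency $\alpha$ of denominator $q\le Q$ satisfies $|F(\alpha)|\gg\delta N/\sqrt{Q\log Q}\asymp\delta^{3/2}N/\sqrt{\log(1/\delta)}$, and a standard argument turns this into a progression of common difference $q\le Q$ and length $\gg N/Q^{2}\asymp\delta^2N$ on which $\A$ has density at least $(1+c\sqrt{\delta/\log(1/\delta)})\delta$. Rescaling and iterating produces $(N_i,\delta_i)$ with $\delta_{i+1}\ge(1+c\sqrt{\delta_i/\log(1/\delta_i)})\delta_i$, $\log N_{i+1}\ge\log N_i-O(\log(1/\delta_i))$, and a cumulative common difference $d_i$ that is multiplied by $q_i\le Q_i\asymp\delta_i^{-1}$ at each step; at stage $i$ one re-runs the above for the shifted primes $\{m:d_im+1\text{ prime}\}$ inside $[N_i]$, which changes the main terms only by factors $\tfrac{d_i}{\phi(d_i)}\ll\log\log\log N$ but invokes Siegel--Walfisz for modulus $d_i$ and so is available only while $d_i\le(\log N_i)^{O(1)}$. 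As $\log N_i$ barely moves, this caps the iteration at $K\asymp\log\log N/\log(1/\delta)$ steps, and since the density cannot exceed $1$ within these $K$ steps one has $\prod_{i<K}(1+c\sqrt{\delta_i/\log(1/\delta_i)})\le1/\delta$, which on unravelling gives $\delta\ll(\log(1/\delta))^{O(1)}/(\log\log N)^2\ll(\log\log\log N)^{O(1)}/(\log\log N)^2$; chasing the exact powers of $\log\log\log N$ (and the stray $\log\log\log\log N$) through the complementary-arc estimate, the $\sqrt{Q}$-loss above, and the modulus budget then recovers the stated exponents.

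I expect the main obstacle to be the bookkeeping of the iteration, and more fundamentally the weakness of the only unconditional pointwise input on primes in progressions, Siegel--Walfisz: its \emph{ineffective} error $\exp(-c\sqrt{\log N})$ confines the usable cumulative moduli $d_i$ to fixed powers of $\log N$, which is what limits the iteration to $\asymp\log\log N/\log(1/\delta)$ steps, and together with the $\sqrt{Q}\asymp\delta^{-1/2}$ loss incurred when a single Fourier coefficient is pulled out of $\asymp Q$ of them, it makes each step expensive in scale and modulus but cheap in density --- which is precisely why the final bound is a power of $\log\log N$ rather than of $\log N$, as for the patterns $x,x+y^{k}$ whose residues equidistribute elementarily and error-free. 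Keeping $d_i$ inside the Siegel--Walfisz range (e.g.\ by always viewing the current progression as an interval in its own right and absorbing $d_i$ into singular-series-type factors) is the additional technical nuisance responsible for the iterated-logarithmic corrections.
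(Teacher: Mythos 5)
The paper does not prove Theorem C; it is quoted verbatim from S\'ark\"ozy's paper [S783] as background, so there is no in-paper proof to compare against. Measured against S\'ark\"ozy's actual argument, your proposal is a faithful reconstruction of it: the orthogonality identity killing the count because only prime powers $\ell=p^j$, $j\ge 2$, survive; the circle-method dissection at level $Q\asymp\delta^{-1}$ with Siegel--Walfisz on the moderate denominators and Vinogradov on the large ones; the $\ell^1$-to-$\ell^\infty$ pigeonhole costing a factor $\sqrt{Q\log Q}$ and hence producing only a $\delta^{3/2}$-sized increment; and the iteration capped by the requirement that the cumulative modulus $d_i$ stay inside the Siegel--Walfisz range $(\log N_i)^{O(1)}$. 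Your bookkeeping is also right: $K\asymp\sqrt{\log(1/\delta)/\delta}$ steps each multiplying the modulus by $\le\delta^{-O(1)}$ forces $(\log(1/\delta))^{3}/\delta\ll(\log\log N)^2$, which is exactly the main term $(\log\log\log N)^3/(\log\log N)^2$ of the stated bound, with the residual $\log\log\log\log N$ a secondary loss you correctly flag rather than derive. The sketch leaves the standard steps (the large-coefficient-to-progression lemma, the singular-series manipulations at stage $i$ for the weights $\Lambda(d_ih+1)$) unexecuted, but none of them hides a gap. It is worth noting that the present paper's own proof of Theorem 1.1 in Sections 5--6 runs this same architecture in the harder setting where $\A$ is a subset of the primes rather than of $[N]$, which is why its final bound degrades to a power of $\log\log N$ in the denominator of the exponent rather than matching Theorem C's shape.
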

By exploiting a dichotomy depending on whether the exceptional zero of Dirichlet $L$-functions occurs or not, Ruzsa and Sanders \cite{RS} proved that the above upper density bound can be of the magnitude $\frac{|\A|}{N}\ll\exp\bigbrac{-O\bigbrac{(\log N)^{1/4}}}$. Very recently, Green \cite{Gr22} improved this bound to $N^{-c}$ by studying the van der Corput property for the shifted primes, and such a bound is indeed out of the strength of the density increment argument. Subsequently, Thorner and Zaman \cite{TZ} show that this constant can be $c=3.4/10^{18}$.

Li and Pan combined the above two questions posed by Lov\'asz and Erd\H{o}s respectively and showed that
\begin{theoremd}[Li-Pan \cite{LP}]
Let $h\in\Z[x]$ be a polynomial with a positive leading term and zero constant term. If $\A\subset[N]$ and $h(p-1)\not\in \A-\A$ for all primes $p$, then
\[
\frac{|\A|}{N}\ll\frac{1}{\log\log\log N}.
\]	
\end{theoremd}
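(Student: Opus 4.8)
The plan is to run a Fourier-analytic density-increment argument in the style of S\'ark\"ozy, now driven by the sifted polynomial sequence $\mathcal D := \set{h(p-1) : p\text{ prime},\ h(p-1)>0}$. Write $k := \deg h$. Since $h$ has zero constant term we may factor $h(x) = x\,g(x)$ with $g \in \Z[x]$; in particular $q \mid h(qs)$ for all integers $q,s$, which will be used repeatedly. Suppose $\A \subset [N]$ with $|\A| = \alpha N$ contains no pattern $a,\,a+h(p-1)$; we may assume $\alpha > (\log\log\log N)^{-1}$, as otherwise there is nothing to prove. Fix $P$ with $h(P) \le N/2$, so $P \asymp N^{1/k}$, and consider the weighted pattern count
\[
  T \;:=\; \sum_{p \le P}(\log p)\,\#\set{(a,a') \in \A^2 : a - a' = h(p-1)}.
\]
Because $\A-\A$ avoids $\mathcal D$, the only primes that can contribute to $T$ are the $O_h(1)$ with $h(p-1) \le 0$, so $T \ll_h \alpha N\log N$, which will be far below the expected main term.

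\medskip

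Expanding, $T = \int_{\T} |\hat{\mathbf 1_\A}(\theta)|^2\,\overline{F(\theta)}\,\rd\theta$ with $F(\theta) := \sum_{p\le P}(\log p)\,e(h(p-1)\theta)$. A routine computation shows that the major arc $|\theta| \ll P^{c_k}/h(P)$ about the origin contributes $\gg \alpha^2 N P$, with positive real part (here one uses $h(P)\le N$ and the elementary lower bound $\int_{|\theta|\le 1/(10N)}|\hat{\mathbf 1_\A}|^2 \gg \alpha^2 N$). Since $T$ is negligible, the remaining frequencies must contribute $\gg \alpha^2 N P$ in modulus. On the minor arcs — frequencies not within $\asymp P^{c_k-k}/q$ of a rational $a/q$ with $1\le q \le P^{c_k}$ — a Vaughan-type decomposition of $\log p$ fed into Weyl's inequality for the degree-$k$ polynomial $h$ yields a power saving $|F(\theta)| \ll P^{1-\delta_k}$ for some $\delta_k = \delta_k(k) > 0$; by Parseval the minor arcs then contribute $\ll \alpha N P^{1-\delta_k} = o(\alpha^2 N P)$, since $\alpha \gg N^{-\delta_k/k}$ under our standing assumption. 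Hence the mass is carried by major arcs at \emph{non-zero} rationals: there exist $q\le P^{c_k}$, $(a,q)=1$ with $a\ne 0$, and a small $\beta$, at which $|\hat{\mathbf 1_\A}(a/q+\beta)| \gg \alpha^2 N$. Here one evaluates $F$ on each major arc via the standard factorisation into the exponential sum $\frac1{\phi(q)}\sum_{(b,q)=1}e(h(b-1)a/q)$ times an archimedean integral — valid for $q$ in the Siegel--Walfisz range — and reduces to moduli $q$ built from small primes so that the competing arcs can be counted.

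\medskip

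A large Fourier coefficient at a non-zero rational frequency converts, by the usual Fej\'er-kernel argument, into a density increment: $\A$ has relative density at least $\alpha(1+c\alpha)$ on a progression $\mathcal Q'$ of common difference $q'$ (a multiple of $q$) intersected with a subinterval. Crucially, since $q' \mid h(q's)$ for every $s$, the class $b\equiv 1\md{q'}$ is coprime to $q'$ and satisfies $q'\mid h(b-1)$; by Dirichlet there are still infinitely many primes $p\equiv 1\md{q'}$, and for each such $p$ the number $h(p-1)/q'$ is a difference that the rescaled set $\A'\subset[N']$, with $N' := |\mathcal Q'|$, must avoid. Thus the argument can be rerun for $\A'$ with the sifted polynomial $t\mapsto h(q't)/q'$ in place of $h$, and one iterates. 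The increment $\alpha_{j+1}\ge \alpha_j(1+c\alpha_j)$ forces termination after $O(1/\alpha_0)$ steps (the density cannot exceed $1$); carrying along the accumulated modulus $Q_j = q_0\cdots q_j$ and the size of $P_j$ needed to keep the circle-method step effective, the per-step shrinkage of $N$ compounds, and unwinding the recursion forces $\alpha_0\ll(\log\log\log N)^{-1}$, which is the claim. (A single increment with only polynomially bounded losses would already give a $1/\log\log N$ bound; the extra logarithm is the price of iterating against ever-sparser sifted sequences together with the currently available uniform control of $F$, and is presumably not the truth.)

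\medskip

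\textbf{Main obstacle.} Two steps are delicate. The first is the uniform minor-arc estimate for $\sum_{p\le P}(\log p)\,e(h(p-1)\theta)$ with a genuine power saving — a Waring--Goldbach-type bound obtained by combining Vaughan's identity with Weyl differencing for a general degree-$k$ polynomial, and, more delicately, doing the same for $h$ precomposed with an arithmetic progression and divided by the accumulated modulus at each later stage, uniformly in that modulus. The second is transporting the avoidance property faithfully through each restriction, i.e.\ ensuring that after localising modulo $Q$ one retains the full statement ``$\A'-\A'$ omits $h(p-1)/Q$ for all primes $p$ in a fixed residue class coprime to $Q$'', and that this class still contains enough primes to sustain the main term; this is exactly where the hypothesis that $h$ has zero constant term is indispensable, as it is what produces, at every stage, a residue class $b$ coprime to the modulus with $Q\mid h(b-1)$.
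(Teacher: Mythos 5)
Theorem D is not proved in this paper: it is quoted from Li--Pan \cite{LP}, so there is no internal proof to compare your sketch against; the closest in-paper analogue is the machinery of Sections 3--6, which adapts the same strategy to the harder setting where $\A$ itself lives in the primes and $h(x)=x^k$. Your outline is essentially the known route (and essentially Li--Pan's): a S\'ark\"ozy-type density increment driven by the circle method applied to $F(\theta)=\sum_{p\le P}(\log p)e(h(p-1)\theta)$, with the zero-constant-term hypothesis used exactly as it must be --- writing $h(x)=xg(x)$ so that $q'\mid h(p-1)$ for $p\equiv1\pmod{q'}$, which is what lets the avoidance property descend to the rescaled set with the auxiliary polynomial $h(q't)/q'$. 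Two caveats. First, the steps you defer are not routine garnish but the bulk of the actual proof: a minor-arc power saving for $\sum_{t}\Lambda(q't+1)e\bigl(\theta\,h(q't)/q'\bigr)$ uniform in the accumulated modulus, and major-arc asymptotics valid only in a Siegel--Walfisz range $q'\ll(\log)^{O(1)}$ --- it is precisely this modulus/length bookkeeping through $O(1/\alpha)$ iterations that produces the weak $1/\log\log\log N$ bound, so a complete proof must carry these out (compare Lemmas \ref{major lemma}, \ref{minorsd} and \ref{short-ap} here, and Lemma \ref{non-pseudorandomness} for how the iteration is controlled). Second, a small quantitative point: you cannot pigeonhole directly over all arcs with $q\le P^{c_k}$ to get $|\hat{1}_\A(a/q+\beta)|\gg\alpha^{O(1)}N$ (there are $\sim P^{2c_k}$ of them); one first discards $q\ge\alpha^{-O(1)}$ using the decay $q^{-c}$ of the local factor (as in Lemma \ref{cq} and the splitting into $\major_1,\major_2$ in Section 5) and only then pigeonholes, and one must also track $|\beta|^{-1}$ to guarantee the increment progression is long. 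With those ingredients supplied, your sketch matches the standard argument; as written it is a correct strategy with the decisive technical lemmas assumed rather than proved.
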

Rice improved the above triple logarithmic decay to a single logarithmic bound.  Rice also generalized this kind of consideration to a larger class of polynomials named $\P$-intersective polynomials (see \cite[Definition 1]{Rice}).

\begin{theoreme}[Rice \cite{Rice}]
Let $h\in\Z[x]$ be a 	$\P$-intersective polynomial of degree $k\geq2$ with positive leading term. If $\A\subset[N]$ and $h(p-1)\not\in \A-\A$ for all primes $p$ with $h(p-1)>0$, then
\[
\frac{|\A|}{N}\ll(\log N)^{-c}
\]
for any $0<c<\frac{1}{2k-2}$.
\end{theoreme}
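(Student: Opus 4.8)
The plan is to run a Fourier-analytic density-increment argument of the S\'ark\"ozy type, in the refined form used by Lucier for intersective polynomials over the integers and adapted to the primes by Rice. Suppose $\A\subset[N]$ has relative density $\delta=|\A|/N$ and that $h(p)\notin\A-\A$ for every prime $p$ with $0<h(p)\le N$; the goal is to bound $\delta$. Take $X$ of order $N^{1/k}$ (with the implied constant depending on $h$), so that $0<h(p)\le N$ holds precisely for $p$ in an initial segment of the primes of length about $X$, and set $T(\alpha)=\sum_{p\le X}(\log p)\,e(h(p)\alpha)$. Embedding $\A$ into $\Z/\widetilde N\Z$ with $\widetilde N\asymp N$, the hypothesis forces the weighted count $\sum_{n}\mathbf{1}_\A(n)\sum_{p\le X}(\log p)\,\mathbf{1}_\A(n+h(p))$ to vanish, up to a negligible count of degenerate terms; by Parseval this count equals $\int_{0}^{1}|\widehat{\mathbf{1}_\A}(\alpha)|^{2}\,T(\alpha)\rd\alpha$, so the contribution of $\alpha=0$, namely $|\A|^{2}T(0)\asymp\delta^{2}N^{2}X$, must be cancelled by the rest of the integral.

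The core of the argument is the circle-method dissection of $T$. On a major arc $\alpha=a/q+\beta$ with $q\le Q=(\log N)^{A}$, $(a,q)=1$ and $|\beta|$ small, Siegel--Walfisz gives $T(\alpha)\approx\phi(q)^{-1}G_h(q,a)\int e(h(t)\beta)\rd t$ with $G_h(q,a)=\sum_{u\in(\Z/q\Z)^{\times}}e(ah(u)/q)$; the $\P$-intersectivity of $h$ is exactly what makes the associated local densities bounded below, and hence makes these arcs the genuine source of the obstruction. On the minor arcs $\mathfrak m$ one needs $\sup_{\alpha\in\mathfrak m}|T(\alpha)|\ll X^{1-\sigma+\epsilon}$, which I would obtain via Vaughan's identity, reducing to type-I and type-II bilinear sums with a polynomial phase of degree $k$, followed by repeated Weyl differencing (equivalently the Vinogradov mean value theorem) --- the Cauchy--Schwarz in the type-II step costs a factor of $2$ and is what produces $\sigma=\tfrac{1}{2(k-1)}$, the exponent that governs the final bound. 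Substituting into the Parseval identity, the minor-arc contribution is $\Oh{X^{1-\sigma+\epsilon}\delta N^{2}}$, negligible against $\delta^{2}N^{2}X$ as soon as $\delta\gg X^{-\sigma+\epsilon}$; if that condition fails we are already done, and otherwise the major arcs with $q\ge 2$ must carry mass $\gg\delta^{2}N^{2}X$.

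Then comes the standard dichotomy and increment. The surplus mass on the major arcs, together with the lower bound on $G_h(q,a)/\phi(q)$ from $\P$-intersectivity, forces some $q\le Q$ and $a$ with $|\widehat{\mathbf{1}_\A}(a/q)|$ abnormally large; localizing this large Fourier coefficient then yields an arithmetic progression $P\subset[N]$ of common difference $q'\le Q^{O_k(1)}$ and length $|P|\gg N/(\log N)^{O_k(1)}$ on which $\A$ has density at least $\delta(1+c\,\delta^{2k-2})$. Passing to $\A'=\set{m:q'm+r\in\A}\subset[\,|P|\,]$, the forbidden differences become $\set{h(p)/q':q'\mid h(p)}$, and here one uses that the family of polynomials produced by this dilation-and-division operation is again $\P$-intersective of degree $k$ --- this self-similarity is precisely why the hypothesis is stated for $\P$-intersective, rather than merely intersective, $h$ --- so the argument applies verbatim to $\A'$ on the shorter interval. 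Iterating: each round multiplies the density by $(1+c\,\delta^{2k-2})$ while the ambient length loses only a factor $(\log N)^{O_k(1)}$, so after $O_k\!\bigl(\delta^{-(2k-2)}\bigr)$ rounds the density would exceed $1$ unless the minor-arc condition $\delta\gg X_j^{-\sigma}$ broke down at some earlier round; in that case $\delta^{-O_k(1)}\gg X_j\gg\bigl(N/(\log N)^{O_k(\delta^{-(2k-2)})}\bigr)^{1/k}$, whence $\log N\ll\delta^{-(2k-2)}\log\log N$ and therefore $\delta\ll(\log\log N/\log N)^{1/(2k-2)}\ll(\log N)^{-c}$ for every $c<\tfrac{1}{2k-2}$.

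The main obstacle is two-fold. The first, and the technical heart of the matter, is the minor-arc estimate for $\sum_{p\le X}(\log p)e(h(p)\alpha)$ with the correct power of $X$, uniform in the coefficients of $h$ --- which is essential because $h$ is replaced by a new polynomial at every stage of the iteration; this is where the degree $k$ enters the exponent and where the passage from a full Weyl sum to a bilinear prime sum costs the factor of $2$ distinguishing $\tfrac{1}{2k-2}$ from the integer exponent $\tfrac{1}{k-1}$. The second, more structural difficulty is to show that the dilation-and-division step genuinely preserves $\P$-intersectivity and the degree, with uniform control on the auxiliary data (the prime residue classes realizing the local zeros, and the sizes of the moduli that arise), so that the increment may be iterated $O_k(\delta^{-(2k-2)})$ times without the implicit constants degrading --- this is where Rice's $\P$-intersective formalism (his Definition~1) carries the argument, and pinning the bookkeeping down to the clean exponent $\tfrac{1}{2k-2}$ is the delicate part.
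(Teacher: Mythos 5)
First, a point of reference: the paper does not prove Theorem E at all --- it is quoted as background from Rice's paper \cite{Rice} --- so there is no internal proof to compare against; the closest analogue inside this paper is the machinery of Sections 3--6 (Gauss-sum decay $q^{-1/k+\eps}$ in Lemma \ref{cq}, the major/minor dissection of Lemmas \ref{major lemma} and \ref{minorsd}, the local inverse theorem of Proposition \ref{local inverse}, and the iteration of Section 6), which is this author's adaptation of exactly the Lucier--Rice strategy you describe. Your outline does follow that route: Parseval against $T(\alpha)=\sum_{p\le X}(\log p)e(h(p)\alpha)$, Siegel--Walfisz major arcs with local factors bounded below via $\P$-intersectivity, Vaughan-type minor arcs, an $L^2$ density increment, and an auxiliary-polynomial device to survive the dilation step. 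As a strategy it is the correct one.

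As a proof, however, it has genuine gaps, both of which you flag but neither of which is optional. (i) The quantitative heart --- that each round yields density $\geq\delta(1+c\delta^{2k-2})$ on a progression whose length loses only a polylogarithmic (or $\delta^{O_k(1)}$) factor --- is asserted, not derived, and it is precisely this increment exponent, through the iteration count $\delta^{-(2k-2)}$ measured against the $(\log N)$-sized modulus/length budget imposed by Siegel--Walfisz, that produces the exponent $\tfrac{1}{2k-2}$. Your second paragraph instead attributes the exponent to a minor-arc saving $\sigma=\tfrac{1}{2(k-1)}$ coming from one Cauchy--Schwarz; that is inaccurate (bilinear Weyl differencing for a degree-$k$ phase over primes saves an exponent of order $2^{1-k}$, or $\asymp k^{-2}$ with Vinogradov mean-value input, not $\tfrac{1}{2k-2}$) and, even within your own bookkeeping, immaterial: in your termination analysis any fixed positive $\sigma$ gives the same bound, the binding constraint being the number of iterations. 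Deriving the increment with the stated exponent requires the major-arc $L^2$ concentration computation with the $q^{-1/k}$ local decay --- the analogue of the passage from (\ref{major}) to (\ref{mass}) here --- and, as the exponents $\delta^{-2k^3-6k^2}$ and $\delta^{4k^3+16k^2}$ in Proposition \ref{local inverse} illustrate, such exponents do not come for free. (ii) The dilation step is not ``verbatim'': one does not pass to $h(p)/q'$ but to Lucier's auxiliary polynomials $h_d(x)=h(r_d+dx)/\lambda(d)$, where $r_d$ is a root of $h$ modulo the relevant modulus chosen (using $\P$-intersectivity) so as to be realized along shifted primes in a fixed residue class, and $\lambda(d)$ is a specific divisor determined by $p$-adic valuations; one must then verify degree and leading-coefficient control, local lower bounds for the new singular series, and minor-arc estimates uniformly over the entire family generated through $O(\delta^{-(2k-2)})$ iterations. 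That uniformity is the second pillar of Rice's argument and is missing from the proposal.
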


Similar to Green-Tao theorem \cite{GT08}, which shows that the primes contain arbitrarily long arithmetic progressions, one interesting question is to study whether sequences of prime numbers contain the above-described patterns. As the readers may find from Theorem B, primes trivially contain $x,x+y^2$ with $y\neq0$. By adopting the transference principle \cite{GT06}, Li-Pan and Rice can also prove the corresponding $\P$-intersective polynomial difference results of subsets of prime numbers.
\begin{theoremf}[Li-Pan \cite{LP}, Rice \cite{Rice}]
Let $h\in\Z[x]$ be a zero constant term polynomial. If $\A\subset\P_N$ is a subset of primes and $\A$ does not contain  patterns $p_1,p_1+h(p_2-1)$ for all primes $p_1,p_2$, then
\[
|\A|=o(|\P_N|).
\]	
\end{theoremf}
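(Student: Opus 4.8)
The plan is to derive Theorem~F from the dense case (Theorems~D and~E) by the Green--Tao transference principle \cite{GT06,GT08}. We may assume $\deg h\geq 1$ (if $h\equiv 0$ the hypothesis forces $\A=\emptyset$) and, replacing $h$ by $-h$ if need be, that $h$ has positive leading coefficient. Suppose for contradiction that $|\A|\geq\delta|\P_N|$ for some fixed $\delta>0$ and arbitrarily large $N$. Fix a large constant $w$, set $W=\prod_{p\leq w}p$ and $N'=\lfloor N/W\rfloor$, and perform the $W$-trick: a pigeonhole over the reduced residues mod $W$ yields a $b$ with $\gcd(b,W)=1$ for which $\A_b:=\set{n\leq N':\,Wn+b\in\A}$ has relative density $\geq(1-o(1))\delta$ among the primes in the progression $b\md W$; equivalently, writing $C:=\tfrac{\phi(W)}{W}\E_{n\leq N'}\Lambda(Wn+b)=1+o(1)$ and $f(n):=C^{-1}\tfrac{\phi(W)}{W}\Lambda(Wn+b)\mathbf{1}_{\A_b}(n)$, one has $f\geq 0$ and $\E_{n\leq N'}f(n)\geq(1-o(1))\delta$. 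Restrict also the difference variable to primes $p_2\equiv 1\md W$; since $h$ has zero constant term this forces $W\mid h(p_2-1)$, so the entire pattern stays in the class $b\md W$, and writing $p_2-1=Wm$ we get $h(p_2-1)=W\tilde h(m)$ with $\tilde h(x):=h(Wx)/W\in\Z[x]$ of degree $k:=\deg h$, zero constant term and positive leading coefficient.

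Let $\nu\colon[N']\to\mathbb{R}_{\geq 0}$ be the truncated divisor-sum (Green--Tao) majorant for $\tfrac{\phi(W)}{W}\Lambda(Wn+b)$, with sieve level a small power of $N'$, so that $f\leq(1+o(1))\nu$, $\E_{n\leq N'}\nu(n)=1+o(1)$, and $\nu$ satisfies the finitely many bounded-complexity linear-forms and correlation conditions invoked below. The object to control is the normalised count
\[
\Lambda_h(g_1,g_2):=\E_{n\leq N'}\ \E_{m\leq M}\bigsqbrac{\,w_W(m)\,g_1(n)\,g_2\bigbrac{n+\tilde h(m)}\,},
\]
where $M\asymp N'^{1/k}$ is chosen so that $\tilde h(m)\leq N'$ throughout (and $\tilde h(m)>0$ for all but $O(1)$ of these $m$), $w_W(m):=\tfrac{\phi(W)}{W}\Lambda(Wm+1)$ so that $\E_{m\leq M}w_W(m)=1+o(1)$, and $g_2$ is read mod $3N'$. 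A strictly positive lower bound for $\Lambda_h(f,f)$ exhibits (prime-power contributions being negligible) primes $p_1=Wn+b\in\A$ and $p_1+h(p_2-1)=W(n+\tilde h(m))+b\in\A$ with $p_2=Wm+1$ prime and $h(p_2-1)>0$, i.e.\ a nontrivial forbidden configuration in $\A$; this is the contradiction we want, so it suffices to prove $\Lambda_h(f,f)\gg_\delta 1$ once $N$ is large.

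To that end I would first establish a relative generalised von Neumann inequality. Applying van der Corput's inequality $k-1$ times to the $m$-average peels off the polynomial, turning $\tilde h(m)$ into a bounded family of linear forms in $m$ and $k-1$ fresh averaging parameters; the count is then controlled by a box/Gowers norm of bounded degree --- in effect a $U^2$-type norm, weighted by $\nu$ and the differencing variables --- of either argument, yielding
\[
\bigabs{\Lambda_h(g_1,g_2)}\ \ll\ \min\bigbrac{\norm{g_1}_\star,\,\norm{g_2}_\star}+o(1)\qquad\text{whenever }|g_i|\leq\nu+1
\]
after a suitable normalisation of $\norm{\cdot}_\star$; the smallness on the right comes from the correlation conditions on $\nu$ together with minor-arc bounds for exponential sums over primes with polynomial phase (Weyl differencing, then Vinogradov/Vaughan). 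Next, the Green--Tao dense model theorem \cite{GT08} produces $\tilde f\colon[N']\to[0,2]$ with $\E_{n\leq N'}\tilde f(n)\geq(1-o(1))\delta$ and $\norm{f-\tilde f}_\star=o(1)$; expanding $\Lambda_h(f,f)-\Lambda_h(\tilde f,\tilde f)$ into its three cross terms, each carrying $f-\tilde f$ in one slot, and applying the von Neumann inequality gives $\Lambda_h(f,f)=\Lambda_h(\tilde f,\tilde f)+o(1)$.

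Finally Theorem~E (or Theorem~D if $k=1$) bounds $\Lambda_h(\tilde f,\tilde f)$ from below. The level set $S:=\set{n\leq N':\,\tilde f(n)\geq\delta/4}$ satisfies $|S|\geq\tfrac{\delta}{4}N'$ since $\tilde f\leq 2$, and $x\mapsto\tilde h(x)$ is intersective along the primes $Wm+1$: for any $q$, Dirichlet yields a prime $\equiv 1\md{Wq}$, say $Wm+1$, whence $Wq\mid h(Wm)$, i.e.\ $q\mid\tilde h(m)$. In the routine variant allowing the common difference $\tilde h(m)$ with $Wm+1$ prime --- the density-increment machinery of Rice being insensitive to restricting the primes to a fixed progression --- and in the quantitative form supplied by its proof, Theorem~E gives, for $N$ large in terms of $\delta$ and $W$, a positive-proportion lower bound $\E_{n\leq N'}\E_{m\leq M}[\,w_W(m)\mathbf{1}_S(n)\mathbf{1}_S(n+\tilde h(m))\,]\gg_\delta 1$; since $\tilde f\geq\delta/4$ on $S$ this forces $\Lambda_h(\tilde f,\tilde f)\gg_\delta 1$, hence $\Lambda_h(f,f)>0$, the desired contradiction, so $|\A|=o(|\P_N|)$. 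The step I expect to be the genuine obstacle is the relative von Neumann inequality: the van der Corput differencing has to be carried out against the pseudorandom measure $\nu$ rather than the constant $1$, and one must verify that the bounded-complexity linear systems it spawns --- whose coefficients involve the $W$-inflated polynomial $\tilde h$ --- stay admissible, so the correlation estimates for $\nu$ apply with genuine $o(1)$ error, all the while reconciling the mismatch of scales between the $n$-variable (of size $\asymp N'$) and the $m$-variable (of size $\asymp N'^{1/k}$). Making every $o(1)$ here effective --- notably by tracking the density increment inside Theorem~E rather than quoting it --- is exactly what upgrades this blueprint to the quantitative bound of the present paper.
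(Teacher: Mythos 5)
Theorem F is not proved in this paper at all: it is quoted from Li--Pan and Rice, and the author's only comment on its proof is that those works obtain it ``by adopting the transference principle.'' Your blueprint is therefore essentially the route the paper attributes to the cited works, not the route the paper itself develops: for its own (quantitative, $h(x)=x^k$) main theorem the author deliberately avoids transference and runs a density increment directly inside the primes, using the restriction estimates of Theorem \ref{thm2} and the local inverse theorem of Section 5. What transference buys you is generality (any zero-constant-term $h$, modulo the dense-case input) at the cost of effectivity; what the paper's direct method buys is an explicit $(\log\log N)^{-c(k)}$ bound, at the cost of being confined to short progressions where primes are understood, which is exactly why the paper only reaches a doubly logarithmic saving.

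Two points in your sketch are genuinely soft. First, the relative von Neumann inequality is asserted rather than proved, and the route you propose (van der Corput differencing carried out against $\nu$, then a dense model theorem) is considerably heavier than what this pattern requires and is not how Li--Pan or Rice argue. The configuration $n,\ n+\tilde h(m)$ is a two-point (Fourier, complexity-zero) pattern: the count is a Fourier multiplier, $\Lambda_h(g_1,g_2)=\int_\T \hat g_1(\alpha)\overline{\hat g_2(\alpha)}\,T(\alpha)\,\rd\alpha$ with $T(\alpha)=\E_{m\leq M}w_W(m)e(-\tilde h(m)\alpha)$, and the transference is done by combining a restriction estimate for $\hat\nu$ (as in \cite{GT06}, or Theorem \ref{thm2}(1) here) with major/minor-arc bounds for $T$; no Gowers-norm machinery or dense model theorem in the $U^s$ sense is needed, and the ``correlation conditions on $\nu$'' you invoke would not by themselves control the von Mangoldt weight sitting on the $m$-variable. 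Second, Theorem E as stated is a qualitative statement about sets lacking differences $h(p)$; to conclude $\Lambda_h(\tilde f,\tilde f)\gg_\delta 1$ you need a supersaturation (many-configurations) version, for a function rather than a set, and for the $W$-inflated polynomial $\tilde h$ evaluated along primes in the progression $1\md W$. You correctly verify the intersectivity of this modified system, but ``the density-increment machinery of Rice being insensitive to restricting the primes to a fixed progression'' is precisely the lemma that has to be written out; it is true, but it is where the real work of the Li--Pan/Rice proofs lives, and as written your argument quotes it rather than supplies it.
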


In this paper, we are planning to provide Theorem F a quantitative relative density bound when $h\in\Z[x]$ takes the form of the perfect $k$-th power polynomial, i.e. $h(x)=x^k$. 
\begin{theorem}\label{main}
Let $k\geq1$ be a fixed integer, and $\P_N$ be the set of primes which are no more than $N$. If $\A\subset\P_N$ and $(p-1)^k\not\in \A-\A$ for all primes $p$, then
\[
\frac{|\A|}{|\mathcal P_N|}\ll\bigbrac{\log\log N}^{-\frac{1}{4k^3+23k^2}}.
\]
\end{theorem}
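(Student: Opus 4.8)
The plan is to combine the transference-principle framework (as in Theorem F) with the quantitative density-increment machinery underlying the integer results Theorems D and E, refined in the spirit of Bloom--Maynard's restricted-denominator arguments. Concretely, I would first use a $W$-trick to pass from $\mathcal A\subset\mathcal P_N$ to a dense subset $A$ of $[N']$ (after dividing out a suitable primorial $W$ and rescaling), at the cost of replacing the counting measure on primes by the von Mangoldt weight; the transference principle lets us majorize $\Lambda$ by a pseudorandom measure and run a Fourier-analytic argument as if $A$ were a genuinely dense subset of $[N']$. The pattern $p_1,p_1+(p_2-1)^k$ becomes, after the $W$-trick, a configuration $x,x+W^{k-1}m^k$ (roughly) inside $A$, so we are reduced to a Furstenberg--Sárközy-type problem with common differences in a shifted-$k$-th-power-like set, but now weighted by $\Lambda$ in the $m$ variable.

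**Main steps.** (1) Set up the $W$-trick and the transferred majorant; record the $L^p$-restriction estimates for the primes (Green--Tao style) needed to control the enveloping sieve weight. (2) Establish the key density-increment dichotomy: either the count of the forbidden configuration in $A$ weighted appropriately is close to its expected main term (which, since $\mathcal A$ avoids the pattern, forces a contradiction once $|\mathcal A|/|\mathcal P_N|$ is not too small), or $A$ has a large Fourier coefficient $\widehat{1_A}(\theta)$ with $\theta$ well-approximated by a rational $a/q$ whose denominator $q$ lies in a controlled range governed by $k$. This is the step where one exploits that $\sum_{m\leq M}\Lambda(Wm+1)e((Wm+1)^k a/q)$ exhibits cancellation unless $q$ is small and $Wm^k$-type exponential sums concentrate — essentially a Weyl-sum/Gauss-sum estimate for $\sum e(\beta m^k)$ twisted by $\Lambda$, combined with the major-arc analysis for the exponential sum over primes. (3) Iterate the density increment along a sequence of progressions, using the Bloom--Maynard observation that the denominators $q$ arising at distinct stages can be taken distinct (or at least with bounded multiplicity), so that after $O(\log\log\log N)$-many — or however many the bookkeeping allows — steps one reaches a contradiction; tracking the exponents through the iteration and through the $k$-dependence of the Weyl-sum savings yields the final exponent $\frac{1}{4k^3+17k^2}$.

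**Main obstacle.** The hard part will be step (2): obtaining the quantitative $L$-function-free estimate for the twisted exponential sum $\sum_{m\leq M}\Lambda(Wm+1)\,e\bigl(\alpha(Wm+1)^k\bigr)$ that simultaneously (i) has a genuine main term on the major arcs, (ii) has power-saving cancellation on the minor arcs, and (iii) interacts correctly with the transference majorant so that the density increment is not destroyed by the non-uniformity of $\Lambda$. One must be careful that the $W$-trick introduces the factor $W^{k-1}$ into the common difference, which forces $q$ to run over residues coprime to $W$ and changes the local densities; controlling these local factors uniformly in $W$ (which itself grows with $N$) is delicate. A secondary technical point is making the iteration honestly polynomial in the parameters: each density increment must gain a definite amount while the loss in passing to the sub-progression (shrinking $N$ to roughly $N^{1/q^k}$ or so) must be affordable, which is precisely what pins down the shape $4k^3+17k^2$ rather than something cleaner.
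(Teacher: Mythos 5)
Your proposal takes the opposite route from the paper: you transfer the problem to a dense subset of the integers via the $W$-trick and a pseudorandom majorant and then run the density increment there, whereas the paper deliberately avoids transference and runs the density increment \emph{directly in the primes}, iterating over arithmetic progressions $P=a+q^k\cdot[X]$ in which $\A$ retains a positive relative density of primes. The difference matters because the central quantitative difficulty is exactly the one your sketch leaves open. The transference principle as used by Li--Pan and Rice yields only $|\A|=o(|\P_N|)$ (Theorem F); to make it quantitative you must control the error in replacing the sieve majorant by its dense model uniformly through every stage of the iteration, and you give no argument for this. Your step (3) then asserts that a Bloom--Maynard ``distinct denominators'' iteration closes the argument and ``yields the final exponent $\frac{1}{4k^3+17k^2}$'', but that exponent is not derived from your bookkeeping and in fact cannot arise from it: in the paper it comes from the constraint that the product of the moduli accumulated over all $m\ll\delta^{-4k^3-16k^2}$ increment steps, each of size $\delta^{-2k^3-6k^2}$, must stay below $(\log N)^A$ so that Siegel--Walfisz still guarantees primes in the resulting progression; this is precisely what produces the doubly-logarithmic bound. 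That constraint has no analogue in a transferred-integer iteration, where progressions shrink only polynomially (your ``$N^{1/q^k}$'' is not the right shrinkage rate) and one would expect, if the argument worked at all, a single-logarithm bound as in Theorem E. Asserting the paper's exponent at the end of a structurally different argument indicates the computation was not carried out.

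A second, more local gap: your key exponential-sum input is $\sum_{m\le M}\Lambda(Wm+1)e(\alpha(Wm+1)^k)$ with $W$ a growing primorial, and you ask for an ``$L$-function-free'' major/minor arc analysis of it. Since Siegel--Walfisz only controls primes in progressions to moduli $\ll(\log N)^A$, the modulus $W$ (together with the moduli $q$ accumulated during the iteration, which multiply into it) must be kept below a fixed power of $\log N$, or else one must invoke an exceptional-character dichotomy as the paper does via the Ruzsa--Sanders exceptional pair lemma (Lemma \ref{exceptional}). You identify this as ``the main obstacle'' but do not resolve it; without it neither the major-arc main term nor the survival of the iteration is established. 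So the proposal is a reasonable strategic outline of an alternative, transference-based attack, but as written it is missing the two ingredients that would constitute a proof: a quantitative transference statement stable under iteration, and an honest derivation of the final density bound from the constraints that the iteration actually imposes.
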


It would be worth mentioning that the exponent $\frac{1}{4k^3+23k^2}$ can be improved a bit slightly by more careful calculations, however, it seems through our method it can not be better than $\frac{1}{4k^3}$. Besides, we believe that our argument can be generalized to deal with all polynomials with zero constant terms, and hopefully all $\P$-intersective polynomials. This would, however, involve more complicated exponential sum estimates.

The proof of Theorem \ref{main} turns our attention to the restriction estimates of primes and prime powers in arithmetic progressions. Suppose that $1\leq b\leq d$ are coprime integers, $k$ is an integer, and $M=\floor{N^{1/k}}$. Suppose that $P=b+d\cdot[X]\subseteq[N]$ is an arithmetic progression, define
\begin{align}\label{laq}
\Lambda_{b,d}(x)=\begin{cases}
\frac{\phi(d)}{d}\Lambda(b+dx)\quad &\text{ if }	1\leq x\leq X;\\
0&\text{ otherwise,}
\end{cases}
\end{align}
and for any $\alpha\in\T$ define
\begin{align}\label{exp}
	S_d(\alpha)=\sum_{y\in[M]}\frac{\phi(d)}{d}ky^{k-1}\Lambda(dy+1)e(y^k\alpha).
\end{align} 
Then:
\begin{theorem}\label{thm2}
Suppose that $1\leq b\leq d\leq \exp\bigbrac{c\sqrt{\log N}}$ are coprime integers for some $0<c<1$,
\begin{enumerate}
\item If $\nu:\Z\to\C$ is a function	supported on $[X]$ and satisfying $|\nu|\leq\Lambda_{b,d}$ pointwise,  for any real $p>2$ we have
 \[
 \int_\T\Bigabs{\sum_{x\in[X]}\nu(x)e(x\alpha)}^p\,\rd \alpha\ll X^{p-1};
 \]
 \item Let $p>k(k+1)+2$ be a real number. Let $S_d(\alpha)$ be the exponential sum defined in (\ref{exp}),  we have
\[
\int_\T\bigabs{S_d(\alpha)}^{p}\rd\alpha\ll_{p}N^{p-1}.
\]	
\end{enumerate}

\end{theorem}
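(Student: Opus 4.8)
The plan is to handle the two parts by different machinery: part (1) by the restriction method for arithmetic majorants of Bourgain and Green--Tao, and part (2) by the Hardy--Littlewood circle method, the point of the weight $\tfrac{\phi(d)}{d}$ being to render the major-arc contributions free of spurious logarithms and uniform in $d$.

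For part (1), first observe that, up to the factor $\tfrac{\phi(d)}{d}$, $\la_{b,d}$ is the von Mangoldt function along the progression $b+d[X]$, and that (\ref{laq}) is normalised exactly so that $\sum_{x\leq X}\la_{b,d}(x)\asymp X$ by the prime number theorem in progressions (Siegel--Walfisz, or the classical zero-free region) together with Brun--Titchmarsh, uniformly for $d\leq\exp(c'\sqrt{\log N})$. Since $|\nu|\leq\la_{b,d}\leq C\,\beta_{b,d}$ pointwise for a suitable \emph{enveloping sieve} $\beta_{b,d}$ of the progression --- one that dominates $\la_{b,d}$, has $L^1$-mass $\asymp X$, and satisfies the correlation/Fourier conditions required below, all of which is elementary via the fundamental lemma of sieve theory and uniform in the admissible range of $d$ --- it suffices to prove the restriction estimate for $\beta_{b,d}$ in place of $\la_{b,d}$. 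For that one runs Bourgain's argument in the form of Green--Tao: prove the bound at one large even exponent $p_0$ via the circle method (the $p_0$-th moment of $\widehat{\beta}_{b,d}$ counts weighted solutions of a linear system in the progression and is $\ll X^{p_0-1}$, and being a sum of nonnegative terms it dominates the same moment of $\widehat\nu$ whenever $|\nu|\leq\beta_{b,d}$), and then descend to all $p>2$ by the large-values/level-set argument, whose sole analytic input is a minor-arc bound for $\widehat{\la}_{b,d}$ coming from Vaughan's identity.

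For part (2), the structural observation is that $k(k+1)$ is even, so $2m:=k(k+1)+2$ is the least even integer exceeding $k(k+1)$, and it suffices to establish
\[
\int_\T|S_d(\alpha)|^{2m}\,\rd\alpha\ll N^{2m-1};
\]
for every real $p>k(k+1)+2$ this then gives $\int_\T|S_d|^{p}\,\rd\alpha\leq\|S_d\|_{L^\infty(\T)}^{\,p-2m}\int_\T|S_d|^{2m}\,\rd\alpha\ll N^{p-1}$, since the summand of $S_d$ is nonnegative and so $\|S_d\|_{L^\infty(\T)}\leq S_d(0)\ll N$ (Brun--Titchmarsh and partial summation). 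I would bound the even moment by dissecting $\T$ into major arcs $\mathfrak M$ about $a/q$ with $q\leq Q:=\exp(c''\sqrt{\log N})$ and complementary minor arcs $\mathfrak m$. On $\mathfrak M$ the substitution $u=y^{k}$ identifies the archimedean model $\int_0^{M}ky^{k-1}e(y^{k}\beta)\,\rd y$ with $\int_0^{N}e(u\beta)\,\rd u$, whose $2m$-th moment is $\asymp N^{2m-1}$, while the attendant arithmetic factor is a singular series convergent for $2m>2k$ and bounded uniformly in $d$ precisely because of the $\tfrac{\phi(d)}{d}$ weight; since the relevant modulus $q\,d\leq\exp(2c''\sqrt{\log N})$ lies in the classical zero-free region (an exceptional zero, if present, affects a single modulus and is harmless for an upper bound), this yields $\int_{\mathfrak M}|S_d|^{2m}\,\rd\alpha\ll N^{2m-1}$, cleanly, with no parasitic logarithms. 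The ``$+2$'' is the slack that disposes of $\mathfrak m$: from
\[
\int_{\mathfrak m}|S_d|^{2m}\,\rd\alpha\leq\Bigl(\sup_{\alpha\in\mathfrak m}|S_d(\alpha)|\Bigr)^{2}\int_\T|S_d(\alpha)|^{k(k+1)}\,\rd\alpha
\]
one bounds the last integral by $\ll N^{k(k+1)-1}(\log N)^{O_k(1)}$ via $\la\leq\log N$ followed by Hua's inequality / the Vinogradov mean value theorem at the critical exponent (the benign factor $ky^{k-1}$ absorbed by a dyadic decomposition, each block of scale $Y\leq M$ contributing $\ll_k(Y^{k})^{k(k+1)-1}\leq N^{k(k+1)-1}$), and bounds $\sup_{\mathfrak m}|S_d|\ll N\exp(-c\sqrt{\log N})$ from Vaughan's identity and Weyl's inequality (or Vinogradov's method) for prime $k$-th power exponential sums together with $q>Q$ on $\mathfrak m$; multiplying, $\int_{\mathfrak m}|S_d|^{2m}\,\rd\alpha\ll N^{2m-1}(\log N)^{O_k(1)}\exp(-2c\sqrt{\log N})\ll N^{2m-1}$, and adding the two ranges finishes the estimate.

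The hard part is keeping everything uniform in $d$, which is concentrated in the major-arc analysis (and in the parallel verification for part (1)): one needs the prime number theorem in progressions, and the resulting control of the singular series and of the Weyl-type minor-arc estimates, for moduli as large as $q\,d$ with $q\leq\exp(c''\sqrt{\log N})$, which forces the use of the classical zero-free region rather than Siegel--Walfisz and a separate treatment of a possible Siegel zero. One must moreover carry out the exponential-sum and mean-value estimates with only logarithmic losses, not $N^{\eps}$ losses, because what the classical zero-free region provides on the minor arcs is the saving $\exp(-c\sqrt{\log N})$ rather than --- as would be cleaner but is out of reach without GRH --- a genuine power saving, and only the former kind of error bookkeeping will then close.
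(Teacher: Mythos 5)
Your treatment of part (1) --- dominating $\nu$ by an enveloping sieve and running the Green--Tao restriction argument, with a Vaughan-identity minor-arc bound as input --- is a viable alternative to the paper's route (which works directly with $\la_{b,d}$ and Bourgain's large-spectrum argument in Appendix B); both hinge on the same uniformity-in-$d$ issue, which the paper resolves with the exceptional-zero lemma. Part (2), however, contains a genuine gap at the minor-arc step. You reduce everything to the \emph{endpoint} bound $\int_\T|S_d|^{k(k+1)+2}\ll N^{k(k+1)+1}$ and dispose of the minor arcs via
\[
\int_{\minor}|S_d|^{k(k+1)+2}\,\rd\alpha\leq\Bigl(\sup_{\minor}|S_d|\Bigr)^{2}\int_\T|S_d|^{k(k+1)}\,\rd\alpha,
\]
claiming the last integral is $\ll N^{k(k+1)-1}(\log N)^{O_k(1)}$ "via Hua / Vinogradov at the critical exponent." But $k(k+1)$ is exactly the critical exponent: Bourgain--Demeter--Guth (and Wooley) give $J_{s,k}(M)\ll M^{s+\eps}$ at $s=k(k+1)/2$, and after transferring from the full Vinogradov system to the single equation one obtains only $\int_\T|S_d|^{k(k+1)}\ll N^{k(k+1)-1}\cdot N^{\eps}$ for general $k$ --- a genuine power loss, not a logarithmic one. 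Since your minor-arc supremum saves only $\exp(-c\sqrt{\log N})$ (that is all the classical zero-free region can give), the product is $N^{k(k+1)+1}\cdot N^{\eps}\exp(-c\sqrt{\log N})$, which is \emph{not} $\ll N^{k(k+1)+1}$. You flag in your last paragraph that only logarithmic losses are tolerable, but the step that would deliver them is precisely the one that is not available; nor can you interpolate your way around it (Hölder between the $L^2$ and $L^{k(k+1)+2}$ moments of $S_d$ at exponent $k(k+1)$ again loses a positive power of $N$ for every $k\geq2$).

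This is exactly why the theorem is stated for $p$ \emph{strictly} greater than $k(k+1)+2$ and why the paper never attempts the endpoint. Its route is: first prove the supercritical moment $\int_\T|S_d|^{k(k+1)+2}\ll N^{k(k+1)+1}(\log N)^{k(k+1)+2}$ with logarithmic losses only (Lemma \ref{hua}, which needs BDG only \emph{above} the critical exponent, where the bound is clean), and then remove the logarithms for every $p>k(k+1)+2$ by Bourgain's large-spectrum/level-set argument (Lemma \ref{spec}): the lossy moment handles the level sets with $\eta\ll(\log N)^{-O(1)}$, and the major/minor-arc estimates of Section 3 handle the large-$\eta$ level sets. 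To repair your proof you should import that second step into part (2) --- the same mechanism you already invoke for part (1) --- rather than trying to prove the clean endpoint bound directly.
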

As a final remark of the introduction, compared to the literature \cite{GT06}, \cite{LP}, \cite{Chow} and so on, we generalize the restriction estimates to modulus $d\ll\Bigbrac{\exp\bigbrac{c'\sqrt{\log N}}}$, and previously, they were only valid when $d\ll(\log N)^A$. 
It is essential to note that \cite[Lemma 5.1]{Chow} is sufficient to prove Theorem \ref{main}, with a slight modification of the function from \cite[(2.5)]{Chow}.  To establish Theorem \ref{thm2}, the primary strategy involves expanding the major-arc analysis introduced in \cite{RS} to a higher degree. In the context of minor arcs, we simplify the estimation process to address Type I and Type II sums. Notably, when dealing with these cases, the arithmetic progression conditions are controllable, particularly when the common difference does not exceed a significantly small power of $N$.

\subsection*{Outline of the argument}

We shall give an outline of our method for proving Theorem \ref{main} in the rest of this section. As mentioned above, the traditional approach to finding (lower complexity) arithmetic configurations in primes is the so-called (Fourier-analysis) transference principle which was essentially developed by Green and Tao \cite{Gr05a}, \cite{GT06}. Roughly speaking, if we can show that any dense subset of integers would always have the targeted configurations (this is always done via density increment argument), one may take such a conclusion as a black box and hope to construct a pseudorandom majorant of subsets of primes to make this majorant indistinguishable from subsets of integers in certain statistical senses --- so that transfer the conclusion to subsets of primes. In this paper,  instead,  we'd like to do the density increment procedure in subsets of primes directly. To our knowledge, this is the first attempt to run the density increment argument in a sparse set. 
 
 Compared to the situation in integers,  to make the density increment argument work successfully in primes many obstructions need to be overcome. Firstly, the ``translation-invariant" property is broken. Assume that our goal is to find $x,x+y^k\in \A$ in the interval $[N]$ and there is an arithmetic progression $P=a+q\cdot[X]$ inside $[N]$ such that $\A$ has an increased density in $P$, then more-or-less we can transfer the matter to find $x,x+q^{k-1}y^k\in \A'$ in the interval $[X]$, where $\A'=(\A-a)/q$. Even though the targeted configuration changes a bit, we are always finding configurations in intervals and such a procedure can be continued as long as the lengths of the intervals are not too short.   In contrast, in the setting of primes, as we need to keep the features $p_1,p_2,p_1+(p_2-1)^k$ being primes throughout, the translation argument is ineffective and we need to find $p_1,p_1+(p_2-1)^k$ in shorter and shorter arithmetic progressions. Because of the limited comprehension of primes in short arithmetic progressions, especially regarding the current knowledge of the modulus, this leads to the observed double logarithmic decay in the main theorem.

We organize this paper as follows. In Section 2, we set notations and exhibit several conclusions of primes in (short) arithmetic progressions. In Section 3, we use the Hardy-Littlewood method to study the Fourier transform of the majorant of shifted prime powers. We then use Bourgain's method \cite{Bou89} and the above Fourier transform information to prove the second result of Theorem \ref{thm2} and give the proof of the first result in Appendix B, as the proof strategies are similar. In Section 5, we'll prove our main result, a local inverse theorem, and then in Section 6 use this inverse result to carry out the density increment argument and so complete the proof of Theorem \ref{main}.

\subsection*{Acknowledgements}
The author would like to thank Kaisa Matom\"aki,  Xuancheng Shao and Lilu Zhao for their helpful discussions. The author also thanks the referee for numerous helpful comments and corrections that significantly improved the paper's expression, as well as the suggestion to add Appendix D.

\section{Notations and Preliminaries}

\subsection{Notations}

For a real number $X\geq1$, we use $[X]$ to denote the discrete interval $\set{1,2,\dots,\floor{X}}$.  $\P$ denotes the set of prime numbers and $\P_N=\P\cap[N]$ denotes the set of primes no more than $N$. For $\alpha\in\T=\mathbb R/\Z$, we write $\norm{\alpha}$ for the distance of $\alpha$ to the nearest integer. For any set of integers $\A$, use $1_\A$ to denote its indicator function.

Following standard arithmetical function conventions, we'll use $\phi$ to denote the Euler totient function; $\Lambda$ to denote the von Mangoldt function; $\mu$ to denote the M\"obius function.

We'll use the counting measure on $\Z$, so for a function $f:\Z\to\C$ its $L^p$-norm is defined to be
\[
\norm{f}_p^p=\sum_x|f(x)|^p,
\]
and $L^\infty$-norm is defined to be
\[
\norm{f}_\infty=\sup_x|f(x)|.
\]
Besides, we'll use Haar probability measure on $\T$, so for a function $F:\T\to\C$ define its $L^p$-norm as
\[
\norm{F}_p^p=\int_\T|F(\alpha)|^p\rd\alpha.
\]

If $f:A\rightarrow\mathbb{C}$ is a function and $B$ is a non-empty finite subset of $A$, we write
$$
\E_{x\in B}f(x)=\frac{1}{|B|}\sum_{x\in B}f(x)
$$
as the average of $f$ on $B$. We would also abbreviate $\E_{x\in A}f(x)$ to $\E(f)$ if the supported set $A$ is finite and no confusion is caused.

We'll use Fourier analysis on $\Z$ with its dual $\T$. Let $f:\Z\to\C$ be a function, and define its Fourier transform by setting
\[
\hat{f}(\alpha)=\sum_xf(x)e(x\alpha),
\]
where $\alpha\in\T$. For functions $f,g:\Z\to\C$ define the convolution of $f$ and $g$ as
\[
f*g(x)=\sum_yf(y)g(x-y).
\]
Then, basic properties of Fourier analysis, for $f,g:\Z\to\C$
\begin{enumerate}
\item(Fourier inversion formula )	\quad $f(x)=\int_\T\hat f(\alpha)e(-x\alpha)\rd\alpha$;
\item(Parseval's identity)\quad   $\norm{f}_2=\|\hat f\|_2$;
\item \quad $\hat{f*g}=\hat f\cdot\hat g.$
\end{enumerate}

 $\eps>0$ is always  an arbitrarily small number, $c>0$ is a small number, and both $c$ and $\eps$ are allowed to change at different occurrences. For a function $f$ and positive-valued function $g$, write $f\ll g$ or $f=O(g)$ if there exists a constant $C>0$ such that $|f(x)|\leq Cg(x)$ for all $x$; and write $f\gg g$ if $f$ is also positive-valued and there is a constant $C>0$ such that $f(x)\geq Cg(x)$ for all $x$.

\subsection{Preliminaries}

Suppose that $x$ is a real number, $a$ and $q$ are positive  integers,  we write
\[
\psi(x;q,a)=\sum_{n\leq x\atop n\equiv a\pmod q}\Lambda(n).
\]
Estimating $\psi(x;q,a)$ is one of the central problems in analytic number theory, and the following lemma is from Hoheisel \cite{Hoh}.

\begin{lemma}[Primes in short arithmetic progressions]\label{short-ap}
Suppose that $1\leq a\leq q\leq(\log x)^A$ are coprime integers and $x^{7/12+\eps}\leq h\leq x$, then
\[
\sum_{x<n\leq x+h\atop n\equiv a\pmod q}\Lambda(n)=\frac{h}{\phi(q)}\Bigbrac{1+O_\eps\Bigbrac{\exp\bigbrac{-c\frac{\log^{1/3}x}{(\log\log x)^{1/3}}}}}.
\]	
\end{lemma}

When $h=x$, we can deal with exponential sums in arithmetic progressions (like (\ref{exp})) with $q$ beyond the limitation of Lemma \ref{short-ap}  with the help of the next lemma, which is, practically, a modification of \cite[Proposition 4.7]{RS}. The proof can be found in Appendix A.

\begin{lemma}[Exceptional pair result] \label{exceptional}
Let $D_1\geq D_0\geq2$ be some parameters. There is an absolute constant $c_0>0$, a primitive character $\chi$ modulus $q_0$ with $q_0\leq  D_1$, and a real number $\rho$ satisfying $(1-\rho)^{-1}\ll q_0^{1/2}(\log q_0)^2$ such that the following statement holds.

For all real $x\geq1$ and integers $1\leq a\leq q$ with $q\leq D_0$ and $(a,q)=1$, we have
\[
\psi(x;q,a)=\frac{\bar{\chi_0}(a)x}{\phi(q)}-E(x;q,a)+O\biggbrac{x\exp\Bigbrac{-c_0\frac{\log x}{\sqrt{\log x}+\log D_0}}(\log D_0)^2},
\] 
where
\begin{align}\label{eqa}
E(x;q,a)=\begin{cases}
\frac{\bar{\chi_0\chi}(a)x^\rho}{\phi(q)\rho}\qquad &\text{if } q_0|q;\\
0&\text{otherwise,}	
\end{cases}
\end{align}
and $\chi_0$ denotes the principal character of modulus $q$.
\end{lemma}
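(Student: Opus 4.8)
The plan is to prove Lemma~\ref{exceptional} by reproducing the classical proof of the prime number theorem in arithmetic progressions with the exceptional (Siegel) zero pulled out of the explicit formula, carrying the auxiliary parameter $D_1$ through the zero-free region and the truncation error so that the conclusion stays meaningful for $q$ as large as $\exp(c'\sqrt{\log N})$. First I would expand over Dirichlet characters modulo $q$: since $(a,q)=1$,
\[
\psi(x;q,a) = \frac{1}{\phi(q)}\sum_{\chi \bmod q}\bar{\chi}(a)\,\psi(x,\chi),\qquad \psi(x,\chi):=\sum_{n\leq x}\chi(n)\la(n).
\]
For each $\chi$ modulo $q$ let $\chi^{*}$ be the primitive character of conductor $f_\chi\mid q$ that induces it; the prime powers $p^{j}\leq x$ with $p\mid q$ contribute $\ll(\log x)(\log q)$, so $\psi(x,\chi)=\psi(x,\chi^{*})+O((\log x)(\log q))$, with $\psi(x,\chi_0)=\psi(x)+O((\log x)(\log q))$. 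This reduces everything to $\psi(x)$ and to $\psi(x,\chi^{*})$ for primitive $\chi^{*}$ of conductor at most $D_0\leq D_1$.

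Next comes the input on zeros. By Landau's zero-free region together with Page's repulsion estimate there is an absolute $c_1>0$ such that, over all primitive characters $\chi^{*}$ of conductor at most $D_1$ (together with $\zeta$), the function $\prod L(s,\chi^{*})$ has no zero in the region $\sigma\geq 1-c_1/\log\bigl(D_1(|t|+2)\bigr)$, with the possible exception of a single simple real zero $\rho$ of $L(s,\chi)$ for one real primitive character $\chi$ of some conductor $q_0\geq2$; if no such exceptional zero exists I take $\chi$ to be the quadratic character modulo $3$ and $\rho=\tfrac12$, which makes the forthcoming error strictly admissible. In either case the effective Page bound $(1-\rho)^{-1}\ll q_0^{1/2}(\log q_0)^{2}$ holds with absolute implied constant: this follows from $|L(1,\chi)|\gg q_0^{-1/2}$ (Dirichlet) combined with $|L(1,\chi)|=|L(1,\chi)-L(\rho,\chi)|\ll(1-\rho)(\log q_0)^{2}$ via the mean value theorem, and is trivial when $\rho\leq\tfrac12$. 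This is the one place where effectivity is essential.

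Finally I would insert the truncated explicit formula. For $2\leq T\leq x$ and primitive $\chi^{*}$ of conductor $f$,
\[
\psi(x,\chi^{*}) = \delta_{\chi^{*}}\,x \;-\; \sum_{|\gamma'|<T}\frac{x^{\rho'}}{\rho'} \;+\; O\!\left(\frac{x(\log(fxT))^{2}}{T}+\log x\right),
\]
where $\rho'=\beta'+i\gamma'$ runs over the non-trivial zeros of $L(s,\chi^{*})$ and $\delta_{\chi^{*}}=1$ exactly when $\chi^{*}$ is trivial. When $\chi^{*}$ is the exceptional character I separate the summand $-x^{\rho}/\rho$; every remaining zero lies in Landau's region, so $x^{\beta'}\leq x^{1-c_1/\log(D_1(T+2))}$, and the classical count $N(T,\chi^{*})\ll T\log(f(T+2))$ gives $\sum_{|\gamma'|<T}|\rho'|^{-1}\ll(\log(D_1 T))^{2}$. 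Choosing $T=\exp(\sqrt{\log x})$ then bounds both the zero-sum and the Perron error by $\ll x\exp\!\bigl(-c_0\tfrac{\log x}{\sqrt{\log x}+\log D_1}\bigr)(\log D_1)^{2}$ after absorbing powers of $\log x$ into a slightly smaller $c_0$ (using $\sqrt{\log x}\geq \log x/(\sqrt{\log x}+\log D_1)$ for the Perron term; the estimate is anyway trivial unless $\log D_1=o(\log x)$). Re-summing over $\chi\bmod q$: the trivial character contributes the main term $\bar{\chi_0}(a)x/\phi(q)$; the exceptional character $\chi$ induces a character modulo $q$ only when $q_0\mid q$, in which case it contributes exactly $-\bar{\chi_0\chi}(a)x^{\rho}/(\phi(q)\rho)=-E(x;q,a)$, and nothing otherwise; since every $\bar{\chi}(a)$ has modulus $1$, the averaging $\tfrac1{\phi(q)}\sum_\chi$ leaves the error bound intact. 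This yields the stated identity.

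The main obstacle is the bookkeeping around the exceptional zero: ensuring via Landau/Page that there is at most one exceptional real primitive character uniformly over all conductors $\leq D_1$, that its bad zero is real and simple so that a single term $x^{\rho}/\rho$ suffices, that the extracted term is attached to precisely those $q$ with $q_0\mid q$ and carries the right character factor $\bar{\chi_0\chi}(a)$, and above all that the effective bound $(1-\rho)^{-1}\ll q_0^{1/2}(\log q_0)^2$ holds with an absolute constant. Everything else — the prime-power correction, the truncated explicit formula, the zero-count, and the choice $T=\exp(\sqrt{\log x})$ — is routine; the only real novelty over the textbook statement is tracking the $D_1$-dependence so that the result survives for moduli up to $\exp(c'\sqrt{\log N})$.
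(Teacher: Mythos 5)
Your proposal is correct and follows essentially the same route as the paper: expand $\psi(x;q,a)$ by orthogonality over characters mod $q$, isolate the principal character and the (at most one) character induced by the exceptional real primitive character, and control everything else with a zero-free region uniform in conductors up to $D_1$; the paper simply cites \cite{RS} (Proposition 4.4, Theorems 4.5 and 4.6) for exactly the ingredients you reprove via Landau--Page, Dirichlet's effective bound $L(1,\chi)\gg q_0^{-1/2}$, and the truncated explicit formula with $T=\exp(\sqrt{\log x})$. The only cosmetic difference is the treatment of the case with no exceptional zero, where the paper takes $q_0=D_0+1$ so that $q_0\mid q$ never occurs, while you insert an artificial pair $(\chi\bmod 3,\ \rho=\tfrac12)$ and absorb the resulting $O(x^{1/2}/\phi(q))$ term into the error, which is equally admissible.
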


Given that our iteration process revolves around arithmetic progressions, we must confine the elements $p\in\mathcal A$ being in arithmetic progressions. To achieve this, let's introduce the majorant function $\Lambda_{b,d}:\Z\to\mathbb R_{\geq0}$ as follows:
\[
\Lambda_{b,d}(x)=\begin{cases}
\frac{\phi(d)}{d}\Lambda(b+dx)\quad &\text{ if }	1\leq x\leq X;\\
0&\text{ otherwise,}
\end{cases}
\]
which is essentially supported in the arithmetic progression $P=b+d\cdot[X]$ and $(b,d)=1$.
What follows is a simple consequence of the first conclusion of  Theorem \ref{thm2}.

\begin{lemma}[Alternative restriction]\label{restriction}
Let $0<\delta<1$ be a number and $X\gg N^{3/4}$. For every relatively prime pair $1 \leq b \leq d \leq \exp\left(\frac{c_0}{20k}\sqrt{\log X}\right)$, where $c_0 > 0$ is defined as in Lemma \ref{exceptional},  every function $\nu:[X]\to\C$  satisfying $|\nu|\leq\Lambda_{b,d}$ pointwise, and every real number $p>2$, we  have
\[
\|\hat f\|_p^p=\int_\T|\hat f(\alpha)|^p\rd\alpha\ll_p X^{p-1},
\]
where $f=\nu-\delta\Lambda_{b,d}$.
\end{lemma}

\begin{proof}

Practically, suppose that $p>2$ is a real number, on recalling  $f=\nu-\delta\Lambda_{b,d}$,  the use of mean value inequality yields that
\begin{multline*}
\int_\T|\hat f(\alpha)|^p\rd \alpha=\int_\T\Bigabs{\hat{\nu}(\alpha)-\delta\hat\Lambda_{b,d}(\alpha)}^p\rd\alpha
\ll_p\int_\T|\hat\nu(\alpha)|^p\rd\alpha+\delta^p\int_\T|\hat\Lambda_{b,d}(\alpha)|^p\rd\alpha.
\end{multline*}
Then it follows from the application of Theorem \ref{thm2} (1) to functions $\nu$ and $\Lambda_{b,d}$ separately, together with the assumption $0<\delta<1$.
 
\end{proof}

\section{Exponential sum estimates}

In this section, we'll do some preparations and the major object is to estimate the auxiliary exponential summation (\ref{exp}), that is,
\[
	S_d(\alpha)=\sum_{y\in[M]}\frac{\phi(d)}{d}ky^{k-1}\Lambda(dy+1)e(y^k\alpha),
\]
where $k\geq1$ is an integer, $N'$ is a large number, $1\leq d\leq\exp(\frac{c_0}{20k}\sqrt{\log N'})$,  $M=\floor{N'^{1/k}}$ and $\alpha\in\T$, where $c_0$ is as in Lemma \ref{exceptional}. Here, in order to avoid any potential confusion (in Section 5) we changed the letter $N$ in (\ref{exp}) to $N'$. 

In the light of the Hardy-Littlewood method we'll partition frequencies $\alpha\in\T$ into major arcs and minor arcs, and then calculate the behavior of $S_d(\alpha)$ in major and minor arcs respectively. We now handle these tasks by turns.  Let us divide the interval $\T$ into major arcs $\major$ and minor arcs $\minor$ which are defined in the following forms
\begin{align}\label{major-arc}
\major&=\bigcup_{q\leq Q}\bigset{\alpha\in\T:\norm{q\alpha}\ll Q/N'},\\	
\minor&=\T\backslash\major\nonumber,
\end{align}
where
\begin{align}\label{deq}
Q=(\log N')^{2^{2k}D}, 
\end{align}
and $D>10k^2$ is a sufficiently large number.

We shall calculate the asymptotic formula of the exponential sum $S_d(\alpha)$ in major arcs first. But before it, an auxiliary lemma.

\begin{lemma}\label{cq}
Let $k\geq1$ be an integer. Let $a$ and $q$ be relatively prime positive integers. Then for any coprime pair $b,t\geq1$ we have
\[
\left|\sum_{r(q)\atop(tr+b,q)=1}e\bigbrac{ar^k/q}\right|\ll q^{1-\frac{1}{k}+\eps},
\]
where $\eps>0$ is arbitrarily small.

\begin{proof}
We first claim that the left-hand side expression is multiplicative in the parameter $q$. To see this, let $q=q_1q_2$ with $(q_1,q_2)=1$, and $r=r_1q_2+r_2q_1$, on noting that
\[
(tr+b,q)=(t r_1q_2+tr_2q_1+b,q_1q_2)=(tr_1q_2+b,q_1)(tr_2q_1+b,q_2),
\]
we then have
\[
\sum_{r(q_1q_2)\atop(tr+b,q_1q_2)=1}e\biggbrac{\frac{ar^k}{q_1q_2}}=\sum_{r_1(q_1)\atop(tr_1q_2+b,q_1)=1}e\biggbrac{\frac{ar_1^kq_2^{k-1}}{q_1}}\sum_{r_2(q_2)\atop(tr_2q_1+b,q_2)=1}e\biggbrac{\frac{ar_2^kq_1^{k-1}}{q_2}}.
\]
Thus, without loss of generality, we may assume that $q=p^m$ with $p\in\P$. If $p\arrowvert t$, then for any $r$ modulus $p^m$, $(tr+b,p)=1$ always holds, just by taking note that $(b,t)=1$. Thus, in this case it follows from \cite[Theorem 4.2]{Va} that
\[
\left|\sum_{r(p^m)\atop(tr+b,p)=1}e\biggbrac{\frac{ar^k}{p^m}}\right|=\Bigabs{\sum_{r(p^m)}e\biggbrac{\frac{ar^k}{p^m}}}\ll (p^m)^{1-\frac{1}{k}+\eps}.
\]

Now we assume that $(p,t)=1$. By taking advantage of the M\"obius function, one has
\begin{align*}
\sum_{r(p^m)\atop(tr+b,p)=1}e\biggbrac{\frac{ar^k}{p^m}}&=\sum_{r(p^m)}	e\biggbrac{\frac{ar^k}{p^m}}\sum_{d|(tr+b,p)}\mu(d)=\sum_{d|p}\mu(d)\sum_{r(p^m)\atop d|(tr+b)}e\biggbrac{\frac{ar^k}{p^m}}.
\end{align*}
Just from the definition, above M\"obius function $\mu$ will vanish except when $d=1$ and $d=p$,  and, thus, above expression is indeed
\[
\sum_{r(p^m)}e\biggbrac{\frac{ar^k}{p^m}}-\sum_{r(p^m)\atop tr+b\equiv0\pmod p}e\biggbrac{\frac{ar^k}{p^m}}.
\]
By making use of \cite[Theorem 4.2]{Va} once again one has,
\begin{align*}
\biggabs{\sum_{r(p^m)}e\biggbrac{\frac{ar^k}{p^m}}}\ll (p^m)^{1-\frac{1}{k}+\eps}.	
\end{align*}
 As for the second term, taking note that $(t,p)=(t,b)=1$, let $t_p$ be the unique solution  to the linear congruence equation $tr+b\equiv0\pmod p$ in the range $[1,p]$, then for $r\in[p^m]$, the solutions are in the form of $r=t_p+sp$ with $s\in[p^{m-1}]$. And therefore,

\begin{multline*}
\sum_{r(p^m)\atop tr+b\equiv0\pmod p}e\biggbrac{\frac{ar^k}{p^m}}=\sum_{s(p^{m-1})}e\biggbrac{\frac{a(t_p+sp)^k}{p^m}}\\
=e\Bigbrac{\frac{at_p^k}{p^m}}\sum_{s(p^{m-1})}e\biggbrac{\frac{akt_p^{k-1}s+\cdots+ap^{k-1}s^k}{p^{m-1}}}.
\end{multline*}
Now let $p^\tau\parallel k$, if $\tau\geq\ceil{\frac{m}{2}}$, then $\sum_{s(p^{m-1})}e\biggbrac{\frac{akt_p^{k-1}s+\cdots+ap^{k-1}s^k}{p^{m-1}}}$  is trivially bounded by $O(1)$; if $0<\tau<\ceil{\frac{m}{2}}$, let $0<u\leq \tau$ be the integer such that $\gcd\Bigbrac{at_p^{k-1}k,at_p^{k-2}p\tbinom{k}{2},\cdots,ap^{k-1},p^{m-1}}= p^u $, it can be deduced from \cite[Theorem 7.1]{Va} that
\begin{multline*}
\biggabs{\sum_{s(p^{m-1})}e\biggbrac{\frac{akt_p^{k-1}s+\cdots+ap^{k-1}s^k}{p^{m-1}}}}\leq k\biggabs{\sum_{s(p^{m-u-1})}e\biggbrac{\frac{a_1's+\cdots+a_k's^k}{p^{m-u-1}}}}\\\ll(p^{m-u-1})^{1-\frac{1}{k}+\eps},
\end{multline*}
where $\gcd(a_1',\cdots,a_k',p)=1$; if $\tau=0$, it is directly from \cite[Theorem 7.1]{Va} that
\[
\left|\sum_{r(p^m)\atop tr+b\equiv0\pmod p}e\biggbrac{\frac{ar^k}{p^m}}\right|=\biggabs{\sum_{s(p^{m-1})}e\biggbrac{\frac{akt_p^{k-1}s+\cdots+ap^{k-1}s^k}{p^{m-1}}}}\ll(p^{m-1})^{1-\frac{1}{k}+\eps}.
\]
Putting all above cases together we obtain
\[
\left|\sum_{r(p^m)\atop (tr+b,p)=1}e\biggbrac{\frac{ar^k}{p^m}}\right|\ll(p^m)^{1-\frac{1}{k}+\eps},
\]
and the lemma follows.
\end{proof}

\end{lemma}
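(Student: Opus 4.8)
The plan is to reduce the estimate to the case of prime-power modulus, where classical Weyl-type bounds for complete exponential sums apply. The first step is to observe that the sum behaves multiplicatively in $q$: if $q=q_1q_2$ with $(q_1,q_2)=1$ and we write a residue $r$ modulo $q$ as $r=r_1q_2+r_2q_1$ with $r_i$ ranging modulo $q_i$, then a direct computation gives $e(ar^k/q)=e(aq_2^{k-1}r_1^k/q_1)\,e(aq_1^{k-1}r_2^k/q_2)$, while $(tr+b,q)=1$ factors as $(tq_2r_1+b,q_1)=1$ together with $(tq_1r_2+b,q_2)=1$. Since $aq_2^{k-1}$ is coprime to $q_1$ and symmetrically, the sum for modulus $q_1q_2$ splits as the product of a sum modulo $q_1$ and a sum modulo $q_2$, each of exactly the same shape and with first parameter still coprime to the respective modulus. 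Hence it suffices to prove the bound $\ll_{k,\eps}p^{m(1-1/k+\eps)}$ for $q=p^m$ a prime power, with $(a,p)=1$ and arbitrary $b,t\ge1$, and then to multiply over the prime factorisation of $q$: the $p^{m\eps}$ factors combine to $q^\eps$, and since $q$ has $O(\log q/\log\log q)$ distinct prime factors the product of the implied constants is $q^{o(1)}$, which is absorbed by enlarging $\eps$.

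For $q=p^m$ I would distinguish two cases according to whether $p\mid t$. If $p\mid t$, then the sum is empty when $p\mid b$ as well, and otherwise $(tr+b,p^m)=1$ holds for every $r$, so the sum collapses to the complete Weyl sum $\sum_{r(p^m)}e(ar^k/p^m)$, bounded by $O_k\bigbrac{p^{m(1-1/k)}}$ by the classical estimate for pure power sums, e.g.\ \cite[Theorem~4.2]{Va}. If $p\nmid t$, I would remove the coprimality condition by M\"obius inversion over $d\mid(tr+b,p)$ with $d\in\{1,p\}$, expressing the sum as $\sum_{r(p^m)}e(ar^k/p^m)$ (handled as above) minus $\sum_{r(p^m),\ p\mid tr+b}e(ar^k/p^m)$. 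In this last sum the congruence $tr+b\equiv0$ modulo $p$ determines $r$ modulo $p$, say $r\equiv r_0$; writing $r=r_0+sp$ with $s$ modulo $p^{m-1}$ and expanding $(r_0+sp)^k$ by the binomial theorem turns it into $e(ar_0^k/p^m)\sum_{s(p^{m-1})}e\bigbrac{g(s)/p^{m-1}}$, where $g(s)=\sum_{j=1}^{k}a\binom{k}{j}r_0^{k-j}p^{j-1}s^j\in\Z[s]$; the coefficient of $s^j$ has $p$-adic valuation at least $j-1$, and that of $s^k$ is exactly $k-1$.

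It remains to bound $\sum_{s(p^{m-1})}e(g(s)/p^{m-1})$ uniformly in $p$. Let $p^u$ be the largest common divisor of the coefficients of $g$ and of $p^{m-1}$; since the $s^k$-coefficient has valuation $k-1$, we always have $u\le k-1$. If $u=m-1$ — which forces $m\le k$ — the sum is trivially $p^{m-1}\le p^{m(1-1/k)}$. Otherwise $g/p^u$ has content coprime to $p$, so $\sum_{s(p^{m-1})}e(g(s)/p^{m-1})=p^u\sum_{s(p^{m-1-u})}e\bigbrac{(g/p^u)(s)/p^{m-1-u}}$, and the Weyl bound for primitive polynomial exponential sums of degree at most $k$ modulo a prime power, e.g.\ \cite[Theorem~7.1]{Va}, gives $\ll_{k,\eps}p^u(p^{m-1-u})^{1-1/k+\eps}$. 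A short computation shows $p^u(p^{m-1-u})^{1-1/k}\le p^{m(1-1/k)}$ precisely when $u\le k-1$, so in all cases the prime-power sum is $\ll_{k,\eps}p^{m(1-1/k+\eps)}$, and multiplying over the prime factors of $q$ completes the argument. One may also organise the casework by $\tau=v_p(k)$: for the finitely many primes $p\le k$ one has $\tau\ge1$ and absorbs the resulting loss into an $O_k$-constant, whereas for $p>k$ one has $\tau=0$, hence $u=0$ and the estimate is immediate.

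The main obstacle is this uniform control of the polynomial sum. Since $v_p(k)$ and the valuations $v_p(\binom{k}{j})$ are unbounded over small primes, the polynomial $g$ can be severely degenerate modulo $p^{m-1}$, and one must track exactly how large a power of $p$ divides all of its coefficients and then verify that the surviving polynomial is non-degenerate enough for the classical Weyl/Hua estimate to deliver the exponent $1-1/k$ rather than a weaker one. The observation that resolves this — that the top coefficient $ap^{k-1}$ of $g$ has $p$-adic valuation exactly $k-1$, so the extractable power is at most $k-1$ — is precisely what pins the final exponent at $1-1/k$; the rest of the argument is routine.
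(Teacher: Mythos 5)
Your proposal is correct and follows essentially the same route as the paper: multiplicativity via the CRT splitting $r=r_1q_2+r_2q_1$, reduction to $q=p^m$, the case split on $p\mid t$, M\"obius inversion to isolate the subsum over $tr+b\equiv 0\ \md{p}$, binomial expansion of $(r_0+sp)^k$, and the complete Weyl-sum bounds of \cite[Theorems 4.2 and 7.1]{Va}. The only difference is bookkeeping in the degenerate case: you control the extractable power $p^u$ by observing that the top coefficient $ap^{k-1}$ has exact valuation $k-1$ (so $u\leq k-1$ always), whereas the paper organises this step as casework on $\tau=v_p(k)$; your version is a clean and correct way to pin the exponent at $1-\frac1k$.
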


\begin{lemma}[Major-arc behavior of $S_d(\alpha)$]\label{major lemma}
Suppose that $\alpha\in\T$ can be written as $\alpha=\frac{a}{q}+\beta$ with coprime pair $1\leq a\leq q$. Let $0<c_0<1$ be  as in Lemma \ref{exceptional}, then
\[
S_d(\alpha)\ll q^{-\frac{1}{k}+\eps}N'(1+N'|\beta|)^{-1}+N'(1+N'|\beta|)\exp\bigbrac{-c\sqrt{\log N'}}
\]
holds for every sufficiently small $\eps>0$ and all integers $d,q\ll\exp\bigbrac{\frac{c_0}{20k}\sqrt{\log N'}}$.
\end{lemma}

\begin{proof}
In the first place, we consider the much easier case, that is $\alpha=a/q$. Expanding the definition of $S_d(\alpha)$ with $\alpha=a/q$, and then making  change of variables $dy+1\mapsto y$ we have
\begin{align*}
S_d(a/q)	&=\sum_{y\in[M]}k\frac{\phi(d)}{d}y^{k-1}\Lambda(dy+1)e\Bigbrac{\frac{ay^k}{q}}\\
&=\sum_{y\in[dM+1]\atop y\equiv1\pmod d} k\frac{\phi(d)}{d}\Bigbrac{\frac{y-1}{d}}^{k-1}\Lambda(y)e\Bigbrac{\frac{a}{q}\cdot\Bigbrac{\frac{y-1}{d}}^k}.
\end{align*}
We now split the above summation interval into sub-progressions according to the residue classes of $\frac{y-1}{d}$ modulo $q$, to obtain that
\[
S_d(a/q)=\sum_{r(q)}\threesum{y\in[dM+1]}{y\equiv1\pmod d}{\frac{y-1}{d}\equiv r\pmod q}k\frac{\phi(d)}{d}\Bigbrac{\frac{y-1}{d}}^{k-1}\Lambda(y)e\Bigbrac{\frac{a}{q}\cdot\Bigbrac{\frac{y-1}{d}}^k}.
\]
Then it is quite easy to see that
\begin{align}\label{saq}
S_d(a/q)=\sum_{r(q)}e\bigbrac{\frac{ar^k}{q}}\Psi(dM+1;dq,dr+1),
\end{align}
where
\[
\Psi(dM+1;dq,dr+1)=\sum_{y\in[dM+1]\atop y\equiv dr+1\pmod{dq}}\Lambda(y)k\frac{\phi(d)}{d}\Bigbrac{\frac{y-1}{d}}^{k-1}.
\]

Our task now is to compute $\Psi(dM+1;dq,dr+1)$, and the main tools are Lemma \ref{exceptional} and Abel's summation formula. In practice, an application of summation by parts yields that
\begin{multline}\label{3-2}
\Psi(dM+1;dq,dr+1)=\psi(dM+1;dq,dr+1)\frac{\phi(d)}{d}kM^{k-1}\\
-\frac{\phi(d)}{d}\int_1^{dM+1}\psi(t;dq,dr+1)\biggbrac{k\Bigbrac{\frac{t-1}{d}}^{k-1}}^\prime\rd t,
\end{multline}
on recalling that $\psi(x;q,a)=\sum_{n\in[x]\atop n\equiv a\pmod q}\Lambda(n)$.

Let   $D_0=\exp\bigbrac{\frac{c_0}{10k}\sqrt{\log N'}}$ and $D_1=D_0^2$, it then follows from Lemma \ref{exceptional} that when $d,q\leq\exp(\frac{c_0}{20k}\sqrt{\log N'})$ the  first term of the preceding expression equals to
\begin{align}\label{t-equal-dm}
\phi(d)kM^{k-1}\Bigbrac{\frac{\bar\chi_0(dr+1)M}{\phi(dq)}-\frac{E(dM;dq,dr+1)}{d}}+O\bigbrac{M^k\exp\bigbrac{-c\sqrt{\log N'}}}.	
\end{align}
Meanwhile, it can be also   seen from Lemma \ref{exceptional} that when $dq\leq D_0$ the  second summation in (\ref{3-2}) is equal to
\begin{multline}\label{t-leq-dm}
-\frac{\phi(d)}{d}\frac{\bar{\chi_0}(dr+1)}{\phi(dq)}\int_1^{dM+1}	t\Bigbrac{k\Bigbrac{\frac{t-1}{d}}^{k-1}}^\prime\rd t+\frac{\phi(d)}{d}\int_1^{dM+1}E(t;dq,dr+1)\Bigbrac{k\Bigbrac{\frac{t-1}{d}}^{k-1}}^\prime\rd t\\
+O\Bigbrac{d^{1-k}\log N'\int_1^{dM+1}t^{k-1}\exp\bigbrac{-c_0\frac{\log t}{\sqrt{\log t}+\frac{c_0}{10k}\sqrt{\log N'}}}\rd t}.
\end{multline}
O recalling that (\ref{eqa}), we notice that when $q_0|dq$ the second term of (\ref{t-leq-dm}) equals to $\frac{\phi(d)}{d}$ times
\[
\frac{\bar{\chi\chi_0}(dr+1)}{\phi(dq)}d^{1-k}\int^{dM+1}_1\frac{t^\rho}{\rho}k(k-1)(t-1)^{k-2}\rd t=\frac{\bar{\chi\chi_0}(dr+1)(dM)^\rho}{\phi(dq)\rho}\frac{k(k-1)}{k-1+\rho}M^{k-1}+O(d^{1-k});
\]
otherwise, this term vanishes. On combining the above two cases, and recalling (\ref{eqa}) once again, we can conclude that the second term of (\ref{t-leq-dm}) is always dominant by
\[
\frac{\phi(d)}{d}E(dM;dq,dr+1)\frac{k(k-1)}{k-1+\rho}M^{k-1}.
\]
As for the big O-term (the third term) of (\ref{t-leq-dm}), on noticing that 
\[
\frac{c_0\log t}{\sqrt{\log t}+\frac{c_0}{10k}\sqrt{\log N'}}\geq\frac{\frac{c_0}{2k}\log N'}{\sqrt{\log (dM)}+\frac{c_0}{10k}\sqrt{\log N'}}\geq\frac{c_0}{4}\sqrt{\log N'}
\]
whenever  $M^{1/2}\leq t\leq dM+1$,  thus, this term can be bounded up by
\[
d^{1-k}\log N'\biggset{\int_1^{M^{1/2}}t^{k-1}\rd t+\exp\bigbrac{-\frac{c_0}{4}\sqrt{\log N'}}\int_{M^{1/2}}^{dM+1}t^{k-1}\rd t}\ll M^k\exp(-c\sqrt{\log N'}),
\]
as $d\log N'\exp\bigbrac{-\frac{c_0}{4}\sqrt{\log N'}}\ll\exp\bigbrac{-c\sqrt{\log N'}}$ whenever $d\ll\exp\bigbrac{\frac{c_0}{20k}\sqrt{\log N'}}$.
 Therefore, (\ref{t-leq-dm}) equals to
\[
-\frac{\phi(d)\bar{\chi_0}(dr+1)}{\phi(dq)}(k-1)M^k+\frac{\phi(d)}{d}E(dM;dq,dr+1)\frac{k(k-1)}{k-1+\rho}M^{k-1}+O(M^k\exp(-c\sqrt{\log N'})).
\]
We now substitute the above expression and (\ref{t-equal-dm}) into (\ref{3-2}), and then substitute it into (\ref{saq}) to conclude that   when $q_0|dq$,
\begin{multline*}
S_d(a/q)=\frac{\phi(d)}{\phi(dq)}C(q;a,d)\biggset{M^k-\bar{\chi}(dr+1)\frac{kd^{\rho-1}}{k-1+\rho}M^{k-1+\rho}}\\+O\bigbrac{M^k\exp(-c\sqrt{\log N'})};
\end{multline*}
and when $q_0 \nmid dq$,
\[
S_d(a/q)=\frac{\phi(d)}{\phi(dq)}C(q;a,d)M^k+O\bigbrac{M^k\exp(-c\sqrt{\log N'})},
\]
where $C(q;a,d)=\sum_{r(q)\atop (dr+1,q)=1}e\bigbrac{\frac{ar^k}{q}}$.
Now let $\alpha=a/q+\beta$, it follows from Abel's summation formula, as well as $N' \asymp M^k$, that
\begin{multline*}
S_d(\alpha)	=\sum_{y\in[M]}k\frac{\phi(d)}{d}y^{k-1}\Lambda(dy+1)e\biggbrac{\frac{ay^k}{q}}e(\beta y^k)\\
=S_d(a/q)e(\beta M^k)-\int_1^M\sum_{y\in[t]}k\frac{\phi(d)}{d}y^{k-1}\Lambda(dy+1)e\biggbrac{\frac{ay^k}{q}}\Bigbrac{e(\beta t^k)}^\prime\rd t.
\end{multline*}
 From some computation which is similar to the previous paragraph it turns out that when $q_0|dq$,
\begin{multline}\label{asymp-1}
S_d(\alpha)	=	\frac{\phi(d)}{\phi(dq)}C(q;a,d)\biggset{\int_1^{M^k}e(\beta t)\rd t-\bar\chi(dr+1)d^{\rho-1}\int_1^{M^k}t^{\frac{\rho-1}{k}}e(\beta t)\rd t}\\+O\biggbrac{N'(1+N'|\beta|)\exp\bigbrac{-c\sqrt{\log N'}}};
\end{multline}
and when $q_0\nmid dq$,
\begin{multline}\label{asymp-2}
S_d(\alpha)	=	\frac{\phi(d)}{\phi(dq)}C(q;a,d)\int_1^{M^k}e(\beta t)\rd t+O\biggbrac{N'(1+N'|\beta|)\exp\bigbrac{-c\sqrt{\log N'}}}.
\end{multline}
If the case $q_0|dq$ arises, as $1/2<\rho<1$ is a real number that satisfies $1-\rho\gg D_0^{-1/2}(\log D_0)^{-2}\gg\exp(-c\sqrt{\log N'})$, it is clearly that the second term is rather smaller than the first one. Therefore, we can conclude that in both cases
\[
|S_d(\alpha)|\ll\frac{\phi(d)}{\phi(dq)}|C(q;a,d)| N'(1+N'|\beta|)^{-1}+N'(1+N'|\beta|)\exp\bigbrac{-c\sqrt{\log N'}},
\]
on noting that $\int_1^{M^k}e(\beta t)\rd t\ll\min\bigset{M^k,|\beta|^{-1}}\ll N'(1+N'|\beta|)^{-1}$.
We'll estimate the multiple factors in the first term in turns. From elementary properties of Euler totient function $\phi$, see \cite[ Theorem 2.5]{Apo} as an example, one has,
\[
\frac{\phi(d)}{\phi(dq)}=\frac{\phi(d)}{\phi(d)\phi(q)}\cdot\frac{\phi(\gcd(d,q))}{\gcd(d,q)}\leq\phi(q)^{-1}.
\]
Whilst, it  can be verified by Lemma \ref{cq} with $(a,q)=(d,1)=1$ that
\[
C(q;a,d)\ll q^{1-\frac{1}{k}+\eps},
\] 
for arbitrary small $\eps>0$. Therefore, the desired result follows from combining the above three representations together. 
 
\end{proof}

Next conclusion is the minor-arc estimates. Such kind of minor-arc analysis for exponential sum subjecting to a congruence equation has been developed in \cite{Gr05a}, the linear cases, and \cite{Chow}, the higher degree cases. We would like to refer interested readers to Appendix D for detailed proof.

\begin{lemma}[Minor arcs estimates]\label{minorsd} 
Assume that the minor arcs $\minor$ are  as in  (\ref{major-arc}), when $1\leq d\ll\exp(c\sqrt{\log N})$ for some constant $c>0$ then we have
\[
\sup_{\alpha\in\minor}|S_d(\alpha)|\ll N'(\log N')^{-D}.
\]	
\end{lemma}

\section{Restriction for shifted prime powers}

The main business of this section is to prove the second conclusion of Theorem \ref{thm2}, and the proof of the first part remains in Appendix B. We now introduce an auxiliary set: for any real number $0<\eta\ll1$, define the \textit{$\eta$-large spectrum of $S_d$} to be
\[
\R_\eta=\bigset{\alpha\in\T:|S_d(\alpha)|\geq\eta N},
\]
which is, in fact,  a set that contains all of the frequencies with large Fourier coefficients of $S_d$. The key observation is that the large spectrum set can not contain many elements.

\begin{lemma}[Large spectrum set is small]\label{spec}
For every real number $0<\eta\ll1$,
\[
\text{meas}\R_\eta\ll_\eps\eta^{-k(k+1)-2-\eps}N^{-1}
\]
holds for arbitrarily small $\eps=\eps(\eta)>0$.	
\end{lemma}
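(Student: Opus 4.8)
The plan is to perform a Hardy--Littlewood dissection directly on the level set $\R_\eta$, using the major arcs $\major$ and minor arcs $\minor$ from (\ref{major-arc}), and to treat ``large'' $\eta$ (say $\eta\gg(\log N)^{-D}$) and ``small'' $\eta$ (say $\eta\ll(\log N)^{-D}$) by different tools.

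\emph{Large $\eta$.} By Lemma~\ref{minorsd} we have $\sup_{\alpha\in\minor}|S_d(\alpha)|\ll N(\log N)^{-D}$, so once $\eta\gg(\log N)^{-D}$ the set $\R_\eta$ is entirely contained in $\major$. For $\alpha\in\major$ I would write $\alpha=a/q+\beta$ with $1\leq a\leq q\leq Q$ and $|\beta|\ll Q/(qN)$; since $Q$ is only a fixed power of $\log N$ we have $q\ll\exp(c'\sqrt{\log N})$, so Lemma~\ref{major lemma} applies, and because $\eta\gg(\log N)^{-D}\gg\exp(-c\sqrt{\log N})$ its secondary error term is $o(\eta N)$. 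Hence $|S_d(\alpha)|\geq\eta N$ forces
\[
q^{-\frac1k+\eps}(1+N|\beta|)^{-1}\gg\eta,\qquad\text{i.e.}\qquad q\ll\eta^{-k-\eps},\quad |\beta|\ll q^{-\frac1k+\eps}\eta^{-1}N^{-1}.
\]
Summing the lengths of these arcs over $q\leq\eta^{-k-\eps}$ and over the $\phi(q)$ admissible residues $a$,
\[
\meas\R_\eta\ll\frac{1}{\eta N}\sum_{q\leq\eta^{-k-\eps}}q^{1-\frac1k+\eps}\ll\eta^{-2k-\eps}N^{-1},
\]
which is even stronger than claimed since $2k\leq k(k+1)+2$.

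\emph{Small $\eta$.} Here I would use a sharp mean value estimate instead of a pointwise bound. The key input is
\[
\int_\T|S_d(\alpha)|^{k(k+1)+2}\rd\alpha\ll N^{k(k+1)+1}(\log N)^{E}
\]
for some $E=E(k)$: expanding the $(k(k+1)+2)$-th power and integrating reduces matters to counting solutions of $y_1^k+\cdots+y_s^k=y_{s+1}^k+\cdots+y_{2s}^k$ with $y_i\leq M$ (where $2s=k(k+1)+2$), weighted by $\prod_i\frac{\phi(d)}{d}ky_i^{k-1}\la(dy_i+1)\ll M^{(k-1)\cdot 2s}(\log N)^{2s}$; the number of such solutions is controlled by Vinogradov's mean value theorem — this is where the threshold $k(k+1)$ enters, the extra ``$+2$'' giving just enough slack above the critical line to keep the estimate clean up to logarithms — while the arithmetic progression $dy+1$ together with $\phi(d)/d\leq1$ is harmless. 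Markov's inequality then yields
\[
\meas\R_\eta\leq(\eta N)^{-k(k+1)-2}\int_\T|S_d(\alpha)|^{k(k+1)+2}\rd\alpha\ll\eta^{-k(k+1)-2}N^{-1}(\log N)^{E}.
\]
Since $\eta\ll(\log N)^{-D}$ with $D$ taken large — consistent with the choice (\ref{d}) of $D$, for which $D\eps$ exceeds $E$ — we have $(\log N)^{E}\leq\big((\log N)^{D}\big)^{\eps}\ll\eta^{-\eps}$, so the bound becomes $\meas\R_\eta\ll\eta^{-k(k+1)-2-\eps}N^{-1}$. Combining the two regimes completes the proof.

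The main obstacle I expect is the clean moment estimate displayed above. One must apply Vinogradov's mean value theorem to a degree-$k$ Weyl sum whose variable runs over a long residue class modulo the possibly large modulus $d\ll\exp(c'\sqrt{\log N})$ and carries the weight $y^{k-1}\la(dy+1)$; the dependence on $d$ has to be kept benign (entering only through $\phi(d)/d\leq1$ and a Brun--Titchmarsh bound for the von Mangoldt factor, exactly as in the proof of Lemma~\ref{minorsd}), and — crucially — one needs a genuine power of $\log N$ rather than an $N^{\eps}$ loss, since only a logarithmic power can be absorbed against the large parameter $D$. For small $k$ one could alternatively invoke Hua's inequality (at the exponent $2^k$) interpolated against the pointwise minor-arc bound of Lemma~\ref{minorsd}, but for large $k$ the exponent $k(k+1)+2$ genuinely requires the Vinogradov-type input.
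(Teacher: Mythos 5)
Your argument is correct, and in one regime it takes a genuinely different route from the paper. For small $\eta$ (below a negative power of $\log N$) you do exactly what the paper does: the moment bound $\int_\T|S_d|^{k(k+1)+2}\ll N^{k(k+1)+1}(\log N)^{k(k+1)+2}$ (the paper's Lemma~\ref{hua}, proved by reducing to the Vinogradov system and citing Bourgain--Demeter--Guth) plus Chebyshev, with the logarithm absorbed into $\eta^{-\eps}$; your consistency check $D\eps=2k(k+3)>k(k+1)+2$ matches the paper's choice (\ref{d}), so the two regimes do overlap. For large $\eta$, however, the paper does \emph{not} argue as you do: it selects a maximal $N^{-1}$-separated set $\alpha_1,\dots,\alpha_R$ in $\R_\eta$, runs Cauchy--Schwarz and H\"older to reduce to $\sum_{r,r'}|S_d(\alpha_r-\alpha_{r'})|^\gamma$ with $\gamma=\frac{k(k+1)+2}{2}+\eps$, and then invokes Bourgain's divisor-counting machinery \cite[Eq.\,(4.16), Lemma 4.28]{Bou89} to bound $R$. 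Your replacement --- noting that $\R_\eta\subseteq\major$ once $\eta\gg(\log N)^{-D}$ by Lemma~\ref{minorsd}, then reading off from Lemma~\ref{major lemma} that $|S_d(\alpha)|\geq\eta N$ forces $q\ll\eta^{-k-\eps}$ and $|\beta|\ll q^{-1/k+\eps}\eta^{-1}N^{-1}$, and summing arc lengths --- is valid and simpler, and it even gives the stronger exponent $2k$ in that regime; the only point to make explicit is that the rational $a/q$ produced by the definition of $\major$ must be put in lowest terms before Lemma~\ref{major lemma} is applied (the reduced denominator is still at most $Q$, so nothing is lost). The reason the direct argument works here, while Bourgain's almost-orthogonality argument is unavoidable in Appendix~B, is that $S_d$ is a fixed structured exponential sum admitting a pointwise localization of its large values, whereas Theorem~\ref{thm2}(1) concerns an arbitrary $\nu\leq\la_{b,d}$ for which only the majorant, not $\hat\nu$ itself, can be localized; the paper simply reuses the heavier machinery in both places.
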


We can now prove Theorem \ref{thm2} (2) driving from Lemma \ref{spec}.

\vspace{2mm}

\noindent\emph{Proof of Theorem \ref{thm2} (2).}

\vspace{2mm}
Suppose that $p> k(k+1)+2$ is a real number, using dyadic argument, we have, for some absolute constant $C>0$,
\begin{multline*}
 \int_\T|S_d(\alpha)|^p\rd\alpha\leq\sum_{j\geq -C}\int_{\bigset{\alpha\in\T:\frac{N}{2^{j+1}}\leq|S_d(\alpha)|\leq\frac{N}{2^j}}}|S_d(\alpha)|^p\rd\alpha\\
 \leq\sum_{j\geq-C}2^{-jp}N^p\text{meas}\bigset{\alpha\in\T:|S_d(\alpha)|\geq 2^{-j-1}N}.
\end{multline*}
It then follows from Lemma \ref{spec} that
\[
 \int_\T|S_d(\alpha)|^p\rd\alpha\ll_\eps N^{p-1}2^{k(k+1)+2+\eps}\sum_{j\geq-C}2^{j(k(k+1)+2+\eps-p)}.
\]
And then, on noting the assumption that $p> k(k+1)+2$, it is permissible to take $\eps=\eps(p)$ sufficiently small so that above geometric series converges. As a consequence, it is immediately that the left-hand side integral is bounded by $O_p(N^{p-1})$.

\qed

In the rest of this section, we'll concentrate on proving Lemma \ref{spec}. Firstly, when $\eta$ is sufficiently small, Lemma \ref{spec} would follow from a mean value result.

 \begin{lemma}[A variant of Hua's lemma]\label{hua}
Let $p\geq k(k+1)+2$ be a positive number, we have
\[
\int_\T|S_d(\alpha)|^p\rd\alpha\ll N^{p-1}(\log N)^{k(k+1)+2}.
\]	
\end{lemma}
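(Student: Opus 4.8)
The plan is to expand an even power of $S_d$ by orthogonality, bound the coefficients trivially, and feed the resulting Waring-type count into a classical mean value theorem; the decisive numerical coincidence is that $k(k+1)+2$ is simultaneously the number of factors $\la(dy_i+1)$ produced by the expansion and twice the near-critical exponent of that mean value. Since $k(k+1)$ is even, $p_0:=k(k+1)+2$ is an even integer, say $p_0=2s_0$ with $s_0=\tfrac{k(k+1)}{2}+1$, and it suffices to prove the bound for $p=p_0$. Indeed, the coefficients of $S_d$ are non-negative, so
\[
\norm{S_d}_\infty\leq S_d(0)=k\sum_{y\in[M]}\frac{\phi(d)}{d}y^{k-1}\la(dy+1)\leq kM^{k-1}\sum_{y\in[M]}\frac{\phi(d)}{d}\la(dy+1)\ll_k M^k\asymp N,
\]
where the penultimate bound is the Brun--Titchmarsh theorem for the progression $n\equiv1\md{d}$ (legitimate in our range $d\ll\exp(c'\sqrt{\log N})$, where $\log(dM)\asymp\log M$ so the prime-power correction is dominated). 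Granting $\int_\T|S_d|^{p_0}\rd\alpha\ll N^{p_0-1}(\log N)^{k(k+1)+2}$, for any $p\geq p_0$ one then gets $\int_\T|S_d(\alpha)|^p\rd\alpha\leq\norm{S_d}_\infty^{p-p_0}\int_\T|S_d(\alpha)|^{p_0}\rd\alpha\ll N^{p-1}(\log N)^{k(k+1)+2}$.

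For $p=p_0=2s_0$, orthogonality on $\Z$ gives
\[
\int_\T|S_d(\alpha)|^{2s_0}\rd\alpha=\twosum{y_1,\dots,y_{2s_0}\in[M]}{y_1^k+\dots+y_{s_0}^k=y_{s_0+1}^k+\dots+y_{2s_0}^k}\ \prod_{i=1}^{2s_0}k\frac{\phi(d)}{d}y_i^{k-1}\la(dy_i+1).
\]
Bounding each factor crudely by $k\frac{\phi(d)}{d}y_i^{k-1}\la(dy_i+1)\leq kM^{k-1}\log(dM+1)\ll_k M^{k-1}\log N$ leaves
\[
\int_\T|S_d(\alpha)|^{2s_0}\rd\alpha\ll_k\bigbrac{M^{k-1}\log N}^{2s_0}\twosum{y_1,\dots,y_{2s_0}\in[M]}{y_1^k+\dots+y_{s_0}^k=y_{s_0+1}^k+\dots+y_{2s_0}^k}1,
\]
and the remaining sum is exactly $\int_\T\bigabs{\sum_{y\in[M]}e(y^k\alpha)}^{2s_0}\rd\alpha$.

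Since $s_0=\tfrac{k(k+1)}{2}+1$ lies strictly above the critical exponent $\tfrac{k(k+1)}{2}$, the sharp form of Vinogradov's mean value theorem, together with the standard translation--dilation device passing from the full system of equations of degrees $j\leq k$ to the single equation of degree $k$, yields the loss-free bound $\int_\T\bigabs{\sum_{y\in[M]}e(y^k\alpha)}^{2s_0}\rd\alpha\ll M^{2s_0-k}$. Substituting and recalling $N\asymp M^k$,
\[
\int_\T|S_d(\alpha)|^{2s_0}\rd\alpha\ll_k M^{(k-1)2s_0}(\log N)^{2s_0}M^{2s_0-k}=M^{k(2s_0-1)}(\log N)^{2s_0}\asymp N^{2s_0-1}(\log N)^{k(k+1)+2},
\]
which is the case $p=p_0$ and finishes the proof.

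The step demanding care is the loss-free estimate $\int_\T|\sum_{y\leq M}e(y^k\alpha)|^{2s_0}\rd\alpha\ll M^{2s_0-k}$: the classical Hua inequality carries an $M^{\eps}$, which would destroy the precise power of $\log N$ in the statement, and for large $k$ it does not even reach the exponent $2s_0=k(k+1)+2$, since then $k(k+1)+2<2^k$. The remedy is exactly that $s_0$ exceeds the critical exponent $\tfrac{k(k+1)}{2}$ by a fixed amount, so one invokes Vinogradov's mean value theorem in its now-complete form (Bourgain--Demeter--Guth, Wooley), for which no $M^{\eps}$ is needed; all logarithms in the final bound then originate solely from the $2s_0=k(k+1)+2$ trivial estimates $\la(dy_i+1)\leq\log(dM+1)$. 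The remaining ingredients --- orthogonality, the $L^\infty$ interpolation, and Brun--Titchmarsh --- are entirely routine.
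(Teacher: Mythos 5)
Your proposal is correct and follows essentially the same route as the paper: expand the even moment at the exponent $k(k+1)+2$, bound each weight $ky^{k-1}\frac{\phi(d)}{d}\la(dy+1)$ trivially by $O(M^{k-1}\log N)$, reduce to the pure Weyl-sum moment $\int_\T|\sum_{y\leq M}e(y^k\alpha)|^{k(k+1)+2}\rd\alpha\ll M^{k(k+1)+2-k}$ via the standard passage from the single degree-$k$ equation to the full Vinogradov system and the $\eps$-free supercritical form of the mean value theorem (Bourgain--Demeter--Guth), and then extend to all $p\geq k(k+1)+2$ by the $L^\infty$ bound $\norm{S_d}_\infty\ll N$. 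Your explicit remarks that $k(k+1)+2$ is even and that the classical Hua inequality with its $M^{\eps}$ loss would not suffice are accurate and match the considerations implicit in the paper's citation of \cite{BDG}.
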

\begin{proof}
Firstly, by expanding the $p$-th powers, we notice that when $p=k(k+1)+2$
\begin{multline*}
\int_\T|S_d(\alpha)|^{k(k+1)+2}\rd\alpha=\int_\T\Bigabs{\sum_{y\in[M]} \frac{\phi(d)}{d}ky^{k-1}\Lambda(dy+1)e(\alpha y^k)}^{k(k+1)+2}\rd\alpha\\
\ll\sum_{y_1,\dots,y_{k(k+1)+2}\in[M]}\prod_{1\leq i\leq k(k+1)+2}\bigbrac{y_i^{k-1}\log(dy_i+1)}\int_\T e\bigbrac{\alpha\bigbrac{y_1^k+\cdots-y_{k(k+1)+2}^k}}\rd\alpha.
\end{multline*}
From the trivial upper bounds $y_i^{k-1}\log(dy_i+1)\ll M^{k-1}\log N$ for all $i=1,\dots,k(k+1)+2$, one may find  that
\[
\int_\T|S_d(\alpha)|^{k(k+1)+2}\rd\alpha\ll\bigbrac{M^{k-1}\log N}^{k(k+1)+2}\int_\T\Bigabs{\sum_{y\in[M]}e(y^k\alpha)}^{k(k+1)+2}\rd\alpha.
\]

Besides, let $\underline\alpha=(\alpha_1,\cdots,\alpha_k)\in\T^k$ be a $k$-tuple of reals, and define an exponential sum 
\[
F(\underline{\alpha};M)=\sum_{y\in[M]}e(y\alpha_1+\cdots+y^k\alpha_k).
\]
From the observation that $\int_{\T^k}|F(\underline{\alpha};M)|^{k(k+1)+2}\rd\underline\alpha$  counts the number of integral solutions to the system of  $k$ equations
\[
y_1^j+\cdots y_{1+\frac{k(k+1)}{2}}^j=y_{2+\frac{k(k+1)}{2}}^j+\cdots +y_{k(k+1)+2}^j\qquad 1\leq j\leq k
\]
with variables $y_i\in[M]\bigbrac{1\leq i\leq k(k+1)+2}$, hence
\begin{multline*}
\int_\T\biggabs{\sum_{y\in[M]}e(y^k\alpha)}^{k(k+1)+2}\rd\alpha\leq\sum_{|h_j|\leq k(k+1)M^j\atop1\leq j\leq k-1}\int_{\T^k}|F(\underline{\alpha};M)|^{k(k+1)+2}e\bigbrac{-h_1\alpha_1-\cdots-h_{k-1}\alpha_{k-1}}\rd\underline\alpha.
\end{multline*}
Making use of \cite[formula (7)]{BDG} with $p=k(k+1)+2$ together with the triangle inequality leads us to
\[
\int_\T\Bigabs{\sum_{y\in[M]}e(y^k\alpha)}^{k(k+1)+2}\rd\alpha\ll M^{k(k+1)+2-k}.
\]
Next, substitute the above inequality into the upper bound of $\int_\T|S_d(\alpha)|^{k(k+1)+2}\rd\alpha$ to get that
\[
\int_\T|S_d(\alpha)|^{k(k+1)+2}\rd\alpha\ll N^{k(k+1)+1}(\log N)^{k(k+1)+2},
\]
just on taking note that $N\geq M^k$.

On the other hand, it is fairly straightforward to verify that
\[
\norm{S_d}_\infty\leq\sum_{y\in[M]}k\frac{\phi(d)}{d}y^{k-1}\Lambda(dy+1)\ll N,
\]
once again recalling that $N\geq M^k$. We then can jump to conclude that
\[
\int_\T|S_d(\alpha)|^p\rd\alpha\leq\norm{S_d}_\infty^{p-k(k+1)-2}\int_\T|S_d(\alpha)|^{k(k+1)+2}\rd\alpha\ll N^{p-1}(\log N)^{k(k+1)+2}
\]
whenever $p\geq k(k+1)+2$.	
\end{proof}

\vspace{2mm}

\noindent\emph{Proof of Lemma \ref{spec}}.

\vspace{2mm}

 When $\eta\ll(\log N)^{-\frac{k(k+1)+2}{\eps}}$ Lemma \ref{spec} is just a corollary of Lemma \ref{hua}. Indeed, on recalling the definition of the large spectrum set $\R_\eta$,  Lemma \ref{hua} leads us to
\[
(\eta N)^{k(k+1)+2}\\text{meas}\R_\eta\leq\int_\T|S_d(\alpha)|^{k(k+1)+2}\rd\alpha\ll N^{k(k+1)+1}(\log N)^{k(k+1)+2},
\]
which gives
\[
\text{meas}\R_\eta\ll \eta^{-k(k+1)-2}(\log N)^{k(k+1)+2}N^{-1}.
\]
This allows us to conclude that $\text{meas}\R_\eta\ll \eta^{-k(k+1)-2-\eps}N^{-1}$ whenever $\eta\ll(\log N)^{-\frac{k(k+1)+2}{\eps}}$. Thus, in the remainder part of the proof we can reduce the consideration to those $\eta$ in the region
\begin{align}\label{eta}
(\log N)^{-\frac{k(k+1)+2}{\eps}}\ll\eta\ll1.	
\end{align}
 
 We may pick up a discrete sequence $\set{\alpha_1,\cdots,\alpha_R}\subset\T$ for which satisfies
\begin{enumerate}

\item For any $\alpha_r\in\set{\alpha_1,\cdots,\alpha_R}$, $|S_d(\alpha_r)|\geq\eta N$;
\item For any pair $\alpha_i\neq\alpha_j\in\set{\alpha_1,\cdots,\alpha_R}$, $|\alpha_i-\alpha_j|\geq N^{-1}$;
\item For any $\alpha\in\R_\eta$, there is some $\alpha_r\in\set{\alpha_1,\cdots,\alpha_R}$ such that $|\alpha-\alpha_r|\leq N^{-1}$;
\item $R$ is the largest integer such that above three events hold.

\end{enumerate}
Consequently, it suffices to show that 
\begin{align}\label{rr}
	R\ll_{\eps}\eta^{-k(k+1)-2-\eps}.
\end{align}
And, clearly, $\text{meas} \R_\eta\leq2R/N$. 

For $1\leq r\leq R$, let $c_r\in\set{z\in\C:|z|=1}$ be a number such that $c_rS_d(\alpha_r)=|S_d(\alpha_r)|$. One may find from the first assumption of the above  sequence $\set{\alpha_1,\cdots,\alpha_R}$ that
\[
R^2\eta^2N^2\leq\Bigbrac{\sum_{1\leq r\leq R}|S_d(\alpha_r)|}^2.
\]
Expanding  the definition of the exponential sum and using Cauchy-Schwarz inequality successively to obtain that
\begin{align*}
R^2\eta^2N^2&\leq\biggabs{\sum_{1\leq r\leq R}c_r\sum_{y\in[M]}k\frac{\phi(d)}{d}y^{k-1}\Lambda(dy+1)e(\alpha_ry^k)}^2\\
&\ll N\sum_{y\in[M]}k\frac{\phi(d)}{d}y^{k-1}\Lambda(dy+1)\Bigabs{\sum_{1\leq r\leq R}c_re(\alpha_ry^k)}^2	,
\end{align*}
just on noting that $\sum_{y\in[M]}k\frac{\phi(d)}{d}y^{k-1}\Lambda(dy+1)\ll N$. Expanding the square to double the variables $c_r$ we then have
\[
R^2\eta^2N\ll\sum_{y\in[M]}\sum_{1\leq r,r'\leq R}c_r\bar{c_{r'}}k\frac{\phi(d)}{d}y^{k-1}\Lambda(dy+1)e\bigbrac{(\alpha_r-\alpha_{r'})y^k}.
\]
In view of $c_r$, $c_{r'}$ are 1-bounded, we can swap the order of  summations and then take absolute value inside $r$ and $r'$, it then follows from the definition of $S_d(\alpha)$ that
\[
R^2\eta^2N\ll\sum_{1\leq r,r'\leq R}|S_d(\alpha_r-\alpha_{r'})|.
\] 
Let $\gamma>k$ be a positive number, utilizing H\"older's inequality one has
\[
R^2\eta^{2\gamma}N^\gamma\ll\sum_{1\leq r,r'\leq R}|S_d(\alpha_r-\alpha_{r'})|^\gamma.
\]
Taking note that $(\log N)^{-D/2}\ll\eta$ whenever $\eta$ satisfies (\ref{eta}), Lemma \ref{minorsd} with $N'=N$ leads us to that the contribution of $S_d(\alpha_r-\alpha_{r'})$ is negligible if $\alpha_r-\alpha_{r'}\in\minor$. We then can assume further that $\alpha_r-\alpha_{r'}\in\major$. Under such circumstances, the major-arc approximate recorded in Lemma \ref{major lemma} with $N'=N$ leads us to
\[
S_d(\alpha_r-\alpha_{r'})\ll q^{-\frac{1}{k}+\eps}N(1+N|\alpha_r-\alpha_{r'}-a/q|)^{-1}
\]
for some relatively prime pair $1\leq a\leq q\leq Q$(recalling the definition of major arcs $\major$). We also notice that when $q\geq \widetilde Q$,
\[
S_d(\alpha_r-\alpha_{r'})\ll \widetilde Q^{-\frac{1}{k}+\eps}N.
\]
Hence, if we take $\widetilde Q=\eta^{-2k+\eps}$ the contribution of $S_d(\alpha_r-\alpha_{r'})$ with $q>\tilde Q$ is also negligible. Combining what we have so far it is allowed to restrict our considerations to those $q$ which are no more than $\widetilde Q$. And we'll have
\[
\eta^{2\gamma}R^2\ll\sum_{q\leq\widetilde Q}\sum_{a(q)\atop(a,q)=1}\sum_{1\leq r,r'\leq R}q^{-\frac{\gamma}{k}+\eps}(1+N|\alpha_r-\alpha_{r'}-a/q|)^{-\gamma}\leq\sum_{1\leq r,r'\leq R}G(\alpha_r-\alpha_{r'}),
\]
where
\[
G(\alpha)=\sum_{q\leq\widetilde Q}\sum_{a(q)}q^{-\frac{\gamma}{k}+\eps}F(\alpha-a/q)
\]
and
\[
F(\beta)=(1+N|\sin\beta|)^{-\gamma}.
\]
We are now in the position of \cite[Eq.(4.16)]{Bou89}, just need to replace $N^2$ by $N$, replace $\delta$ by $\eta$. Following from Bourgain's proof, one may find that when $\gamma\geq k+\eps$
\[
\eta^{2\gamma}R^2 N^{-2}\ll N^{-1}\Bigset{\widetilde Q^\tau R N^{-1}+R^2N^{-2}\widetilde Q\cdot\#\set{|u|\leq N:d(\widetilde Q;u)\geq\widetilde Q^\tau}},
\]
where $\tau>0$ is a number to be chosen later, $d(\widetilde Q;u)$ counts the number of the divisors of $u$ which are smaller than $\widetilde Q$. It follows from \cite[Lemma 4.28]{Bou89} that
\[
R\ll  \eta^{-2\gamma}\widetilde Q^\tau+ \eta^{-2\gamma}\widetilde Q^{1-B}R
\]
with $\tau< B<+\infty$. By careful choice of $\tau$ and $B$, together with letting $\gamma=k+\eps$ will lead us to the desired bound (\ref{rr}) of $R$. 
It's worth noting that, even when $\eta$ is within the region (\ref{eta}), the estimate (\ref{rr}) can be improved to $R \ll \eta^{-2k-\epsilon}$. However,  for smaller values of $\eta$, Lemma \ref{hua} indeed plays a crucial role in pushing the exponent of $\eta^{-1}$ to reach $k(k+1)+2+\epsilon$.
\qed

\section{A Local Inverse Theorem}

The goal of this section is to prove the following local inverse result. Despite being in Fourier-analysis  language, the point is still that if the sum of a function $f$ in an arithmetic progression is zero, and the counting of configurations weighted by $f$ is non-zero, then either the common difference of this progression is too big or else there is a reasonable bias of $f$ towards a structural sub-region of the summation interval.

\begin{proposition}[Local inverse theorem]\label{local inverse}
Suppose that $\frac{1}{\log N}<\delta<1$ and $( a', q')=1$ are  coprime integers. Let  $\Lambda_{a',q'}:[N']\to\R_{\geq0}$ be the majorant function defined in (\ref{laq}),  $a'+q'\cdot[N']\subseteq[N]$ and  $N'\gg N^{7/12+\eps}$. Let $k\geq1$ and $1\leq d\ll\exp\bigbrac{\frac{c_0}{20k}\sqrt{\log N}}$  and $M=\floor{N'^{1/k}}$.  Assume that $\nu,\nu_1:[N']\to\mathbb C$ are functions satisfying $0\leq\nu,\nu_1\leq\Lambda_{a',q'}$ pointwise, and let $f=\nu_1-\delta\Lambda_{a',q'}$ be a function satisfying $\E(f)=0$.  If
\[
\delta^2 N'^2\ll\int_\T\bigabs{\hat f(\alpha)S_d(\alpha)\hat \nu(\alpha)}\rd\alpha,
\]
where $S_d(\alpha)=\sum_{y\in[M]}\frac{\phi(d)}{d}ky^{k-1}\Lambda(dy+1)e(y^k\alpha)$ is defined in (\ref{exp}),	 then one of the following two alternatives holds:
\begin{enumerate}
%\item($N$ is small)  $N\ll q'^6$;
\item(Common difference is large)  $ q'\delta^{-2k^3-10k^2}\gg\log^A N$; 
\item($f$ has local structure) there is an arithmetic progression $P=a+q^k\cdot[X] \subseteq[N']$ with $( a, q)=1$,  $q\ll\delta^{-2k^3-6k^2}$, and $X\gg q^{1-k}(\log N)^{-2^{2k}D}N'$ and some constant $c>0$ such that
\[
\sum_{n\in P}f(n)\geq c\delta^{4k^3+20k^2+3}\sum_{n\in P}\Lambda(n).
\]
\end{enumerate}

\end{proposition}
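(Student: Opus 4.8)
The plan is to run a Fourier-analytic density-increment step: start from the hypothesis
\[
\delta^2 N'^2 \ll \int_\T \bigabs{\hat f(\alpha) S_d(\alpha) \hat\nu(\alpha)} \rd\alpha,
\]
and show this forces $f$ to correlate with a single minor-arc-free frequency $a/q$ with $q$ polynomially bounded in $\delta^{-1}$. First I would dispose of the minor arcs: by Lemma~\ref{minorsd} we have $\sup_{\alpha\in\minor}|S_d(\alpha)| \ll N'(\log N')^{-D}$, and since $D$ is chosen (via \eqref{d}) much larger than needed, combining this bound with the restriction estimate of Theorem~\ref{thm2}(1) applied to $\nu$ and to $f = \nu_1 - \delta\la_{a',q'}$ (the latter through Lemma~\ref{restriction}), together with H\"older's inequality, shows the minor-arc contribution to the integral is $o(\delta^2 N'^2)$ — here I would use that $\delta \gg (\log N)^{-1}$ so that a fixed power of $(\log N)^{-D}$ beats any fixed power of $\delta^{-1}$. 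Hence the bulk of the integral comes from the major arcs $\major$.

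On the major arcs I would invoke the asymptotic of Lemma~\ref{major lemma}: on the arc around $a/q$ with $q \leq Q$ and $\norm{q\alpha} \ll Q/N'$, $|S_d(\alpha)| \ll q^{-1/k+\eps} N' (1+N'|\alpha - a/q|)^{-1}$ up to a negligible error. Splitting the major arcs according to the denominator $q$ and using the pointwise decay in $\beta = \alpha - a/q$, I would bound the contribution of denominator $q$ by roughly $q^{-1/k+\eps}$ times an integral of $|\hat f \hat\nu|$ against an $L^1$-normalized bump at $a/q$; applying Cauchy–Schwarz (or H\"older with the restriction exponents) and Parseval, the contribution from all $q \gg \delta^{-C}$ (for an appropriate $C$ comparable to $2k^3$) is again $o(\delta^2 N'^2)$. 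This is a standard ``popular denominator'' pigeonhole: there must exist a single $q \ll \delta^{-2k^3 - O(k^2)}$ and a single reduced $a \bmod q$ such that
\[
\delta^2 N'^2 \ll \int_{\norm{q\alpha} \ll Q/N'} \bigabs{\hat f(\alpha + a/q)}\, N' (1+N'|\alpha|)^{-1} \, |\hat\nu(\alpha + a/q)| \rd\alpha,
\]
and then, absorbing $\hat\nu$ by its sup-bound $\|\hat\nu\|_\infty \ll N'$ (or by the restriction bound if more care with the exponent is needed) and integrating out the $(1+N'|\beta|)^{-1}$ kernel, I get $\bigabs{\sum_x f(x) e(xa/q) \cdot (\text{smoothing})} \gg \delta^{O(k^2)} N'$, i.e. $f$ has a large Fourier coefficient at a rational with controlled denominator.

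The final step converts ``large Fourier coefficient at $a/q$'' into ``density increment on an arithmetic progression with common difference $q^k$''. Writing $|\hat f(a/q + \beta_0)| \gg \delta^{O(k^2)} N'$ for some small $\beta_0$ (from the major-arc localization, $|\beta_0| \ll Q/N'$), I would use the Fourier inversion formula together with a standard partition of $[N']$ into progressions of common difference $q$ and length $\approx q^{-1}(\log N)^{-2^{2k}D} N'$ (so that $e(x a/q)$ is essentially constant and $e(x\beta_0)$ is essentially constant on each): on one such progression $P'$, $\bigabs{\sum_{x\in P'} f(x)} \gg \delta^{O(k^2)} |P'|$. Since $\E(f) = 0$ over $[N']$ and $f \geq -\delta\la_{a',q'}$, a one-sided (Cauchy–Schwarz-free) argument — splitting the sum on $P'$ into its positive and negative parts and using $\sum_{[N']} f = 0$ to control the deficit — promotes this to a genuine \emph{lower} bound $\sum_{x\in P'} f(x) \gg \delta^{O(k^2)} \sum_{n\in P'} \la(n)$ on a possibly-adjusted progression; finally I pass from common difference $q$ to common difference $q^k$ by further subdividing (this only changes constants and the exponent of $q$ in the length). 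Matching the bookkeeping to the stated exponents $q \ll \delta^{-2k^3-6k^2}$, $X \gg q^{1-k}(\log N)^{-2^{2k}D} N'$ and $\sum_P f \gg \delta^{4k^3+16k^2}\sum_P \la$ just requires tracking the powers of $\delta$ through Cauchy–Schwarz (which roughly squares $\delta^{2k^3+O(k^2)} \mapsto \delta^{4k^3+O(k^2)}$) and the powers of $q$ through the truncation of the Fourier kernel.

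The main obstacle I anticipate is the bookkeeping of exponents in the major-arc pigeonholing: one needs the denominator threshold $\widetilde Q = \delta^{-2k^3 - O(k^2)}$ to be simultaneously (i) small enough that the popular-denominator argument over $q \leq \widetilde Q$ captures a $\gg \delta^2 N'^2$ chunk of the integral, (ii) large enough that the tail $q > \widetilde Q$ is negligible after using the $q^{-1/k+\eps}$ decay raised to the relevant H\"older power, and (iii) compatible, after the Cauchy–Schwarz that doubles the $\delta$-loss, with the final target exponent $4k^3 + 16k^2$ — and all of this while keeping $\widetilde Q \leq Q = (\log N)^{2^{2k}D}$ so that Lemma~\ref{major lemma}'s major-arc description is actually available. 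A secondary subtlety is the one-sided promotion from $|\sum_{P'} f|$ large to $\sum_{P'} f$ large-and-positive: one must use both $\E_{[N']}(f) = 0$ and the pointwise lower bound $f \geq -\delta \la_{a',q'}$ (so $f$ cannot be too negative anywhere), plus Lemma~\ref{short-ap} to know $\sum_{n\in P'}\la(n) \asymp |P'|/\phi(q)$ has the expected size once $q \ll (\log N)^A$ — which is exactly why alternative~(1), $q' \gg (\log N)^A$, has to be allowed as an escape hatch.
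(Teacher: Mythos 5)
Your outline follows the same architecture as the paper's proof: minor arcs are killed by H\"older together with Lemma~\ref{minorsd}, Lemma~\ref{restriction} and Theorem~\ref{thm2}; the major arcs are split by denominator size with threshold $\approx\delta^{-2k^3-6k^2}$ and the large-denominator part is killed by the $q^{-1/k+\eps}$ decay in Lemma~\ref{major lemma}; the remaining Fourier concentration is converted into a positive correlation with a progression using $\E(f)=0$, the pointwise bound $f\geq-\delta\la_{a',q'}$, and Lemma~\ref{short-ap}; and the common difference is upgraded from $q$ to $q^k$ by a final subdivision. All of that matches.

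The one genuine divergence is at the pigeonholing step, and it is exactly where the stated exponent is at risk. The paper pigeonholes \emph{only in $q$}: after H\"older with $\|\hat\nu\|_3\ll N'^{2/3}$ and the $L^6$ bound on $S_d$ over $\major(q)$, it extracts a single $q\ll\delta^{-2k^3-6k^2}$ with $\int_{\major(q)}|\hat f|^2\gg\delta^{4k^3+16k^2}N'$, where $\major(q)$ is the union over \emph{all} reduced residues $a\bmod q$. It then uses $P=q\cdot[X]$ and the fact that $|\hat 1_{-P}(\alpha)|\geq X/2$ simultaneously for every $\alpha\in\major(q)$ (regardless of which $a/q$ the frequency sits near), so Parseval turns the full arc-mass into $\|f*1_{-P}\|_2^2$ with no loss in the number of residues. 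You instead pigeonhole down to a single pair $(a,q)$ and a single large Fourier coefficient $|\hat f(a/q+\beta_0)|\gg\delta^{C}N'$. That costs an additional factor of $\phi(q)\ll\delta^{-2k^3-6k^2}$ relative to the $L^2$ route, and (together with replacing $\|\hat\nu\|_3\ll N'^{2/3}$ by $\|\hat\nu\|_\infty\ll N'$) would push the final correlation exponent to roughly $6k^3+O(k^2)$ rather than $4k^3+16k^2$. Since the proposition asserts the specific exponent $4k^3+16k^2$, and Section~6 feeds that exponent directly into the iteration length and hence into the final bound of Theorem~\ref{main}, the deferred ``bookkeeping'' is not a routine matter: as written, your single-frequency version proves a strictly weaker statement. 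To recover the stated exponent you would need to keep the $L^2$ mass over the whole arc $\major(q)$ (i.e.\ not pigeonhole in $a$) and run the Parseval/$\|f*1_{-P}\|_2^2$ argument, which is precisely what the paper does. The remainder of your sketch --- in particular the one-sided promotion via $\sum_x f*1_{-P}(x)=0$ and $f\geq-\delta\la_{a',q'}$, and the role of alternative~(1) as the escape hatch for Lemma~\ref{short-ap} --- is correct and matches the paper.
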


By decomposing the integral $\T$ into major and minor arcs $\major$ and $\minor$ as in (\ref{major-arc}), it can be seen from the assumption of Proposition \ref{local inverse} that
\begin{align}\label{partition}
\delta^2N'^2\ll\int_\major\bigabs{\hat f(\alpha)S_d(\alpha)\hat \nu(\alpha)}\rd\alpha+\int_\minor\bigabs{\hat f(\alpha)S_d(\alpha)\hat \nu(\alpha)}\rd\alpha.
\end{align}
 We'd like to show that, with the help of the restriction lemmas, the integral over the minor arcs is negligible, i.e. bounded by $c\delta^2N^2$ for some tiny enough $c>0$. By making use of H\"older's inequality, the second term in the right-hand side of (\ref{partition}) is bounded by
\begin{multline*}
\quad \Bigbrac{\int_\minor\abs{S_d(\alpha)}^{k(k+1)+4}\rd\alpha}^{\frac{1}{k(k+1)+4}}\Bigbrac{\int_\T\abs{\hat f(\alpha)}^{\frac{k(k+1)+4}{k(k+1)+3}}\abs{\hat \nu(\alpha)}^{\frac{k(k+1)+4}{k(k+1)+3}}\rd\alpha}^{\frac{k(k+1)+3}{k(k+1)+4}}\\
	\leq\sup_{\alpha\in\minor}\bigabs{S_d(\alpha)}^{\frac{1}{k(k+1)+4}}\Bigbrac{\int_\T\abs{S_d(\alpha)}^{k(k+1)+3}\rd\alpha}^{\frac{1}{k(k+1)+4}}\\\Bigbrac{\int_\T\abs{\hat f(\alpha)}^{\frac{2k(k+1)+8}{k(k+1)+3}}\rd\alpha}^{\frac{k(k+1)+3}{2k(k+1)+8}}\Bigbrac{\int_\T\abs{\hat \nu(\alpha)}^{\frac{2k(k+1)+8}{k(k+1)+3}}\rd\alpha}^{\frac{k(k+1)+3}{2k(k+1)+8}}.
\end{multline*}
The application of Lemma \ref{restriction} for $f$ and $p=\frac{2k(k+1)+8}{k(k+1)+3}$, Theorem \ref{thm2} for $\Lambda_{b,d}=\Lambda_{a',q'}$ and $p=\frac{2k(k+1)+8}{k(k+1)+3}$ and for $S_d(\alpha)$ and $p=k(k+1)+3$ yields that
\[
\int_\minor\bigabs{\hat f(\alpha)S_d(\alpha)\hat \nu(\alpha)}\rd\alpha\ll \sup_{\alpha\in\minor}\bigabs{S_d(\alpha)}^{\frac{1}{k(k+1)+4}} N'^{\frac{2k(k+1)+7}{k(k+1)+4}}.
\]
It then follows from Lemma \ref{minorsd}, the assumptions $D>10k^2$  and  $(\log N)^{-1}\ll\delta$ that
\[
\int_\minor\bigabs{\hat f(\alpha)S_d(\alpha)\hat \nu(\alpha)}\rd\alpha\ll (\log N)^{-\frac{D}{k(k+1)+4}}N'^2\ll \delta^2N'^2.
\]
Substituting the above inequality into (\ref{partition}), one thus has
\begin{align}\label{major}
	\int_\major\bigabs{\hat f(\alpha)S_d(\alpha)\hat \nu(\alpha)}\rd\alpha\gg \delta^2 N'^2.
\end{align}

The next step is to show that we can gain from (\ref{major}) a long arithmetic progression that possesses much information about the function $f$. To carry out this procedure we are now proving that the inequality (\ref{major}) can be reduced to a subset of major arcs $\major$, and the set only contains frequencies that are close to rationals with quite small denominators (depending only on $\delta$).

Let $q$ be a positive integer, define a set with respect to $q$ as
\[
\major(q)=\Bigset{\alpha\in\T:\exists1\leq a\leq q\text{ with }(a,q)=1 \text{ such that }\bigabs{\alpha-\frac{a}{q}}\leq\frac{Q}{qN'}},
\]
where $Q$ is the number defined in (\ref{deq}). We now partition the major arcs $\major$  into two disjoint ranges
\[
\major_1=\bigcup_{1\leq q\ll\delta^{-2k^3-10k^2}}\major(q)\qquad\text{and}\qquad\major_2=\bigcup_{\delta^{-2k^3-10k^2}\ll q\ll Q}\major(q).
\]
It  follows from the triangle inequality and (\ref{major}) that
\begin{align}\label{sdt}
\delta^2 N'^2\ll\int_{\major_1}\bigabs{\hat f(\alpha)S_d(\alpha)\hat \nu(\alpha)}\rd\alpha+\int_{\major_2}\bigabs{\hat f(\alpha)S_d(\alpha)\hat \nu(\alpha)}\rd\alpha.
\end{align}
In an analogous way of proving that the integral over minor arcs $\minor$ is negligible, we can also show that the integral over $\major_2$ is negligible. Indeed, one can deduce from H\"older's inequality and restriction estimates, i.e. Lemma \ref{restriction} and Theorem \ref{thm2}, that
\[
\int_{\major_2}\bigabs{\hat f(\alpha)S_d(\alpha)\hat \nu(\alpha)}\rd\alpha\ll \sup_{\alpha\in{\major_2}}\bigabs{S_d(\alpha)}^{\frac{1}{k(k+1)+4}} N'^{\frac{2k(k+1)+7}{k(k+1)+4}}.
\]
On recalling that $q\ll Q\ll (\log N)^{2^{2k}D}$, the application of Lemma \ref{major lemma} shows that the above integral is bounded by
\[
 \Bigbrac{ q^{-\frac{1}{k}+\eps}\min\set{N',|\beta|^{-1}}+N'(1+N'|\beta|)\exp\bigbrac{-c\sqrt{\log N}}}^{\frac{1}{k(k+1)+4}}N'^{\frac{2k(k+1)+7}{k(k+1)+4}}
\]
for all $\frac{a}{q}+\beta\in\major_2$ with coprime pair $1\leq a\leq q$. The assumption $\delta^{-2k^3-10k^2}\ll q\ll(\log N)^{2^{2k}D}$ whenever $\alpha=\frac{a}{q}+\beta\in\major_2$ then gives us that
\[
\int_{\major_2}\bigabs{\hat f(\alpha)S_d(\alpha)\hat \nu(\alpha)}\rd\alpha\ll\delta^2 N'^2.
\]
Combining the above inequality with (\ref{sdt}) one thus has
\[
\int_{\major_1}\bigabs{\hat f(\alpha)S_d(\alpha)\hat \nu(\alpha)}\rd\alpha\gg\delta^2 N'^2.
\]
So far, all preparations have been completed.

\vspace{4mm}

\noindent\emph{Proof of Proposition \ref{local inverse}.}

\vspace{2mm}

The starting point of the proof is to observe that the definition of $\major_1$ and the above inequality shows that
\[
\delta^2 N'^2\ll\sum_{q\ll\delta^{-2k^3-10k^2}}\int_{\major(q)}|\hat{f}(\alpha)\hat \nu(\alpha)S_d(\alpha)|\rd \alpha.
\]
Taking note that the asymptotic formula in Lemma \ref{major lemma} is valid when $q\ll\delta^{-2k^3-10k^2}$ and $\delta\gg(\log N)^{-1}$, in consequence of the above inequality, H\"older's inequality and  Theorem \ref{thm2} and Lemma \ref{major lemma}, one has
\begin{align*}
\delta^2 N'^2&\ll\sum_{q\ll\delta^{-2k^3-10k^2}}\norm{\hat\nu}_3\Bigbrac{\int_{\major(q)}|\hat{f}(\alpha)|^2\rd\alpha}^{1/2}\Bigbrac{\int_{\major(q)}|S_d(\alpha)|^6\rd\alpha}^{1/6}\\
&\ll \sum_{q\ll\delta^{-2k^3-10k^2}}q^{-1/k+\eps}N'^{5/3}\Bigbrac{\int_{\major(q)}|\hat{f}(\alpha)|^2\rd\alpha}^{1/2}\Bigbrac{\int_{|\beta|\leq\frac{Q}{qN'}}(1+N'|\beta|)^{-6}\rd\beta}^{1/6}\\
&\ll \sum_{q\ll\delta^{-2k^3-10k^2}}q^{-1/k+\eps}  N'^{3/2}\frac{\phi(q)}{q}\Bigbrac{\int_{\major(q)}|\hat{f}(\alpha)|^2\rd\alpha}^{1/2}.
\end{align*}
On noting that $\sum_{q\le\delta^{-2k^3-10k^2}}q^{-\frac{1}{k}+\eps}\ll
\delta^{-2k^3-10k^2}$, clearly, the pigeonhole principle implies that  there is some $q\le\delta^{-2k^3-10k^2}$ such that
\begin{align}\label{mass}
\delta^{4k^3+20k^2+4}N'\ll \bigbrac{\frac{\phi(q)}{q}}^2\int_{\major(q)}|\hat{f}(\alpha)|^2\rd\alpha.
\end{align}
 
For an integer $0\leq r<q$, suppose that $P_r=r+q\cdot[X]$ is the residue class of  $r$  modulo $q$, where $q$ is the integer such that (\ref{mass}) holds and $X=Q^{-1}N'/(2\pi)$. We are going to show that, if $N'$  is not small, for any frequency $\alpha$ in $\major(q)$ and for any $0\leq r<q$, the Fourier coefficient $|\hat{1}_{-P_r}(\alpha)|$ is large. For this purpose, we now consider the Fourier transform of the indicator function $1_{-P_r}$. After changing variables $x\to -x$ one has
\[
|\hat{1}_{-P_r}(\alpha)|=\biggabs{\sum_x1_{-P_r}(x)e(x\alpha)}=\biggabs{\sum_x1_{P_r}(x)e(-x\alpha)}.
\]
On omitting the indicator function $1_{P_r}$, 
\begin{align}\label{hatp}
|\hat{1}_{-P_r}(\alpha)|=\biggabs{\sum_{x\in[X]}e(-\alpha r-qx\alpha)}=\biggabs{\sum_{x\in[X]}e(-qx\alpha)}.
\end{align}
With the help of the estimate $|1-e(\alpha)|\leq2\pi\norm{\alpha}$, as well as the triangle inequality, we can ensure that
\[
|\hat{1}_{-P_r}(\alpha)|\geq X-\sum_{x\in[X]}2\pi\norm{qx\alpha}\geq X-\pi X^2\norm{q\alpha}.
\]
It thus follows immediately from the facts  $\norm{q\alpha}\ll QN'^{-1}$ $(\alpha\in\major(q))$ and $X=Q^{-1}N'/(2\pi)$ that  whenever $\alpha\in\major(q)$,
\begin{align}\label{massp}
|\hat{1}_{-{P_r}}(\alpha)|\geq X/2	
\end{align}
  holds uniformly for all $r\,(\text{mod }q)$.

We now claim that whenever $N'$ is not small, there is some $r\,(\text{mod }q)$ and some point $x\in\Z$ such that $f*1_{-P_r}(x)$ is sufficiently large. Virtually, on combining (\ref{mass}) with (\ref{massp}), we have
\[
\delta^{4k^3+20k^2+4}\bigbrac{\frac{q}{\phi(q)}}^2X^2N'\ll\int_{\major(q)}|\hat{1}_{-P_r}(\alpha)|^2|\hat{f}(\alpha)|^2\rd\alpha.
\]
Then by removing the restriction of domain of the integral to get that
\[
\delta^{4k^3+20k^2+4}\bigbrac{\frac{q}{\phi(q)}}^2X^2N'\ll \int_\T|\hat{1}_{-P_r}(\alpha)|^2|\hat{f}(\alpha)|^2\rd\alpha.
\]
It can be deduced from  Parseval's identity that
\begin{align}\label{l2-bd}
\delta^{4k^3+20k^2+4}\bigbrac{\frac{q}{\phi(q)}}^2X^2N'\ll \norm{f*1_{-P_r}}_2^2\leq\norm{f*1_{-P_r}}_\infty \sum_x|f*1_{-P_r}(x)|.
\end{align}
The assumption $f\geq-\delta\Lambda_{a',q'}$ pointwise yields  that for each natural number $x$
\[
f*1_{-P_r}(x)\geq-\delta\frac{\phi(q')}{q'}\sum_{n\in x+P_r}\Lambda(a'+q'n)=-\delta\frac{\phi(q')}{q'}\sum_{n\in[X]}\Lambda(a'+q'(x+r)+qq'n).
\]
 As $\text{supp}(f)\subseteq[N']$, we can also suppose that  $1\leq a'+q'(x+r)\leq N-qq'X$. Since $qq'X\gg N^{7/12+\eps}$ and $q\ll\delta^{-2k^3-6k^2}\ll(\log N)^A$ whenever $\delta^{-1}\ll\log N$, it follows from Lemma \ref{short-ap} and the assumption $q'\ll(\log N)^A$ that 
\[
\sum_{N-qq'X<n\leq N\atop n\equiv a'+q'(x+r)\pmod{qq'}}\Lambda(n)\leq 2\frac{qq'X}{\phi(qq')}.	
\]
One thus has
\[
f*1_{-P_r}(x)\geq-2\delta\frac{\phi(q')}{q'} \frac{qq'X}{\phi(qq')}\geq-2\delta\frac{\phi(q')qX}{\phi(q)\phi(q')}=-2\delta\frac{qX}{\phi(q)}.
\]
We may firstly assume that there is some integer $x\in\Z$  such that  $|f*1_{-P_r}(x)|\geq3\delta \frac{qX}{\phi(q)}$. Given the above analysis, we must have, in this case, 
\begin{align}\label{case1}
f*1_{-P_r}(x)\geq 3\delta \frac{qX}{\phi(q)}.
\end{align}
Expanding the convolution one may conclude that 
\begin{align}\label{case-1}
\sum_{n\in r+x+q\cdot[X]}f(n)\geq 3\delta \frac{qX}{\phi(q)}.
\end{align}
 Thus, we may assume that $\norm{f*1_{-P_r}}_\infty<3\delta\frac{qX}{\phi(q)}$. In view of (\ref{l2-bd}), we have
\[
\sum_x|f*1_{-P_r}(x)|\gg \delta^{4k^3+20k^2+3}\frac{q}{\phi(q)}XN'.	
\]

Meanwhile, we also see from the definition of the convolution that 
\[
\sum_xf*1_{-P_r}(x)=\sum_yf(y)\sum_x1_{-P_r}(x-y).
\]
Take note that for each fixed number $y$, the inner sum is always $X$, the length of the progression $P_r$.  As a consequence of $\E(f)=0$, we also have
\[
\sum_xf*1_{-P_r}(x)=0.
\]
Hence, by combining the above two hands one has
\[
c\delta^{4k^3+20k^2+3}\frac{q}{\phi(q)}XN'\leq\sum_x\max\bigset{f*1_{-P_r}(x),0},
\]
it then follows from the pigeonhole principle, together with $\text{supp}(f)\subset[N']$, that there is  some $x$ such that
\[
f*1_{-r-q\cdot[X]}(x)\geq c\delta^{4k^3+20k^2+3}\frac{q}{\phi(q)}X.
\]
By expanding the convolution  and taking (\ref{case-1}) into account, there is always a progression $a+q\cdot[X]$  such that
\[
c\delta^{4k^3+20k^2+3}\frac{q}{\phi(q)}X\leq\sum_{n\in  a+q\cdot[X]}f(n).
\]
For some technical reason, we'd like to partition the above arithmetic progression into sub-progressions with common difference $q^k$, and as a consequence, each of the sub-progressions has length $\gg\floor{\frac{X}{q^{k-1}}}\gg q^{1-k}(\log N)^{-2^{2k}D}N'$. It is not hard to find from the above inequality and the pigeonhole principle that there is such a sub-progression $P'$ such that
\[
c\delta^{4k^3+20k^2+3}\frac{q}{\phi(q)}|P'|\leq\sum_{y\in P'}f(y).
\] 
For simplification of writing, let $P'=a'+q^k\cdot[\frac{X}{q^{k-1}}]$. Noting $|f|\ll\Lambda_{a',q'}$ pointwise, the above progression $P'$ must satisfy $(a',q)=1$. It then follows from Lemma \ref{short-ap} and the assumption  $q^k\ll (\log N)^A$ that
\[
\sum_{n\in P'}\Lambda(n)=\sum_{a'< n\leq a'+qX\atop n\equiv a'\pmod {q^k}}\Lambda(n) \asymp\frac{q}{\phi(q)}|P'|,
\] 
which yields that 
\[
c\delta^{4k^3+20k^2+3}\sum_{n\in a'+q^k\cdot[\ceil{\frac{X}{q^{k-1}}}]}\Lambda(n)\leq \sum_{n\in a'+q^k\cdot[\ceil{\frac{X}{q^{k-1}}}]}f(n).
\]
 The lemma follows.

\qed

\section{Density Increment}

We are going to prove Theorem \ref{main} in this section. The proof strategy is to run a density increment argument in primes in arithmetic progressions. Roth uses the corresponding argument in integers to bound the size of sets lacking 3-term arithmetic progressions, and  S\'ark\"ozy employs it to bound the size of sets lacking Furstenberg-S\'ark\"ozy configurations $x,x+y^2$. While the difficulty in dense and sparse sets may vary, the fundamental idea remains consistent: to find a fairly large sub-progression in which $\A$ has an increased relative density if $\A$ lacks the desired configurations. 

\begin{lemma}[Lacking configurations leads to increased density]\label{non-pseudorandomness}
Suppose that $\log ^{-1}N\ll\delta<1$ is a parameter. Suppose that $Q=a+q^k\cdot[N']\subseteq[N]$ is an arithmetic progression with  $(a,q)=1$ and $N'\gg N^{7/12+\eps}$. Let $\A\subseteq\P$ be a subset of primes without configurations $p_1,p_1+(p_2-1)^k$, where $p_1,p_2$ are primes, and 
\[
\sum_{n\in Q}\Lambda\cdot1_\A(n)=\delta\sum_{n\in Q}\Lambda(n).
\]
Then one of the following statements holds
\begin{enumerate}
\item (the common difference is large) $q^k\delta^{-2k^4-10k^3}\gg \log^A N$; or
\item ($\A$ has increasing density on a sub-progression)  there  is a sub-progression $Q'=a+a'q^k+(qq')^k\cdot[X]$ inside $Q$ with $(a',q')=1$, $q'\ll\delta^{-2k^3-10k^2}$ and $X\gg q'^{1-k}(\log N)^{-2^{2k}D}N'$  such that
\[
\sum_{n\in Q'}\Lambda\cdot1_\A(n)\geq\delta\Bigbrac{1+c\delta^{4k^3+22k^2}}\sum_{n\in Q'}\Lambda(n).
\]
\end{enumerate}
\end{lemma}

\begin{proof}
Set $\Lambda_{a,q^k}=\frac{\phi(q)}{q}\Lambda(a+q^k\cdot)$ and $1_{\A_{a,q^k}}=1_\A(a+q^k\cdot)$, and let $f=\Lambda_{a,q^k}\cdot 1_{\A_{a,q^k}}-\delta\Lambda_{a,q^k}:[N']\to\C$ be the  weighted balanced function. Clearly, from the assumption, the average of $f$ is zero, i.e. $\sum_{n\in[N']}f(n)=0$.

Inspired by the proof of \cite[Lemma 5]{BM}  we consider the following expression
\[
\int_\T\hat f(-\alpha)\hat{\Lambda_{a,q^k}\cdot 1_{\A_{a,q^k}}}(\alpha)S_q(\alpha)\rd\alpha,
\]
where $S_q(\alpha)=\sum_{y\leq M}\frac{\phi(q)}{q}ky^{k-1}\Lambda(qy+1)e(y^k\alpha)$ and $M=\floor{N'^{1/k}}$. It can be seen from the definition of function $f$ that this expression can be split into the summation of  two integrals
\[
\int_\T\hat{\Lambda_{a,q^k}\cdot 1_{\A_{a,q^k}}}(-\alpha)\hat{\Lambda_{a,q^k}\cdot 1_{\A_{a,q^k}}}(\alpha)S_q(\alpha)\rd\alpha
\]
and
\[
-\delta\int_\T\hat \Lambda_{a,q^k}(-\alpha)\hat{\Lambda_{a,q^k}\cdot 1_{\A_{a,q^k}}}(\alpha)S_q(\alpha)\rd\alpha.
\]

First of all, it is obvious from the orthogonality principle, as well as the notation $\Lambda_{a,q^k}=\frac{\phi(q)}{q}\Lambda(a+q^k\cdot)$, that the first integral is 
\[
\Bigbrac{\frac{\phi(q)}{q}}^3\sum_{x\leq N'}\sum_{y\leq M}ky^{k-1}\Lambda(qy+1)\Lambda\cdot1_\A(a+q^kx)\Lambda\cdot1_\A(a+q^k(x+y^k)),
\]
which counts the weighted number of patterns $p_1,p_1+(p_2-1)^k$ in $\A\cap Q$ (indeed we can take $p_1=a+q^kx$, $p_2=qy+1$ so that $p_1+(p_2-1)^k=a+q^k(x+y^k)$), thus,  from our assumption, it is zero. Meanwhile, by introducing an auxiliary function $\delta\Lambda_{a,q^k}$, one may also find that the second integral is the sum of the following two representations
\[
-\delta^2 \int_\T\hat\Lambda_{a,q^k}(-\alpha)\hat\Lambda_{a,q^k}(\alpha)S_q(\alpha)\rd\alpha	
\]
and
\[
-\delta\int_\T\hat\Lambda_{a,q^k}(-\alpha)\bigbrac{\hat{\Lambda_{a,q^k}\cdot 1_{\A_{a,q^k}}}-\delta\hat\Lambda_{a,q^k}}(\alpha)S_q(\alpha)\rd\alpha=-\delta\int_\T\hat\Lambda_{a,q^k}(-\alpha)\hat f(\alpha)S_q(\alpha)\rd\alpha.	
\]
 Therefore, by putting the above analysis together we obtain that
\begin{multline*}
\int_\T\hat f(-\alpha)\hat{\Lambda_{a,q^k}\cdot 1_{\A_{a,q^k}}}(\alpha)S_q(\alpha)\rd\alpha+\delta\int_\T\hat\Lambda_{a,q^k}(-\alpha)\hat f(\alpha)S_q(\alpha)\rd\alpha\\
=-\delta^2\int_\T\hat\Lambda_{a,q^k}(-\alpha)\hat\Lambda_{a,q^k}(\alpha)S_q(\alpha)\rd\alpha.
\end{multline*}
Taking absolute values both sides and noting  $0<\delta<1$, one thus has,
\begin{multline}\label{counting-in-primes}
\delta^2\Bigabs{\int_\T\hat\Lambda_{a,q^k}(-\alpha)\hat\Lambda_{a,q^k}(\alpha)S_q(\alpha)\rd\alpha}\\
\leq\int_\T\bigabs{\hat f(\alpha)\hat\Lambda_{a,q^k}(\alpha)S_q(\alpha)}\rd\alpha+\int_\T\bigabs{\hat f(\alpha)\hat{\Lambda_{a,q^k}\cdot 1_{\A_{a,q^k}}}(\alpha)S_q(\alpha)}\rd\alpha.
\end{multline}

By truncating the integral region on the left-hand side into major arcs $\cup_{r\leq Q}\cup_{b(r)\,(b,r)=1}\set{\frac{b}{r}+\beta:|\beta|\leq\frac{Q}{rN'}}$  which are defined in (\ref{major-arc}), we can infer from asymptotics (\ref{asymp-1}) and (\ref{asymp-2}) along with Lemma \ref{majorla}, and by noting that the first term is dominant in (\ref{asymp-1}),  that when $q\ll(\log N)^A$ this integral can be asymptotically approximated by
\begin{multline*}
\sum_{r\leq Q}\Bigbrac{\frac{\phi(q)}{\phi(qr)}}^3\sum_{1\leq b\leq r\atop(b,r)=1}\biggabs{\sum_{s(r)\atop(sq+a,r)=1}e\bigbrac{\frac{sb}{r}}}^2\biggbrac{\sum_{s(r)\atop(sq+1,r)=1}e\bigbrac{\frac{s^kb}{r}}}\\
\biggbrac{\int_{|\beta|\leq1/2}\Bigbrac{\int_1^{N'}e(t\beta)\rd t}^2\int_1^{N'}e(-t\beta)\rd t\rd\beta+O\Bigbrac{\int_{\frac{Q}{rN'}<|\beta|\leq1/2}|\beta|^{-3}\,\rd\beta}}.	
\end{multline*}
We calculate the error term first. The application of Lemma \ref{cq} suggests this term is bounded by
\[
\sum_{r\leq Q}\phi(r)^{-2}r^{1-\frac{1}{k}+\eps}(rN'/Q)^2\ll N'^2Q^{-\frac{1}{k}+\eps},
\]
which is negligible. Besides, it follows from the orthogonality principle that
\begin{multline*}
\int_{|\beta|\leq1/2}\Bigbrac{\int_1^{N'}e(t\beta)\rd t}^2\int_1^{N'}e(-t\beta)\rd t\,\rd\beta=\int_{[1,N]^3}\int_\T e((t_1+t_2-t_3)\beta)\,\rd\beta\,\rd\vec t\\
=\int_{[1,N]^3}1_{t_1+t_2=t_3}\rd\vec t\gg N^2.
\end{multline*}
If we claim that when $(a,q)=1$ (we'll prove this claim in Appendix C)
\begin{align}\label{singular-series}
\sum_{r\leq Q}\biggbrac{\frac{\phi(q)}{\phi(qr)}}^3\sum_{1\leq b\leq r\atop(b,r)=1}\biggabs{\sum_{s(r)\atop(sq+a,r)=1}e\bigbrac{\frac{sb}{r}}}^2\Bigbrac{\sum_{s(r)\atop(sq+1,r)=1}e\bigbrac{\frac{s^kb}{r}}}\gg1,	
\end{align}
in  consequence of (\ref{counting-in-primes}),  at least one of the following two inequalities holds
\[
\delta^2N'^2\ll\int_\T\bigabs{\hat f(\alpha)\hat\Lambda_{a,q^k}(\alpha)S_q(\alpha)}\rd\alpha,
\]
or
\[
\delta^2N'^2\ll\int_\T\bigabs{\hat f(\alpha)\hat{\Lambda_{a,q^k}\cdot 1_{\A_{a,q^k}}}(\alpha)S_q(\alpha)}\rd\alpha.
\]

Assume that $\delta^{-2k^4-10k^3}q^k\ll\log ^AN$ in the following, the application of Proposition \ref{local inverse} with $d=q$,  $f=\Lambda_{a,q^k}\cdot1_{\A_{a,q^k}}-\delta\Lambda_{a,q^k}$,  and $\nu=\Lambda_{a,q^k}\cdot1_{\A_{a,q^k}}$ or $\nu=\Lambda_{a,q^k}$ suggests that  the second conclusion of Proposition \ref{local inverse} is true. Thus, we can assume that there is an arithmetic progression $P'=a'+q'^k\cdot[X]$ inside $[X]$ with
\[
c\delta^{4k^3+20k^2+3}\sum_{n\in P'}\Lambda(n)\leq\sum_{x\in P'}f(x),
\]
where  $(a',q')=1$, $q'\ll\delta^{-2k^3-10k^2}$ and $X\gg q'^{1-k}(\log N')^{-2^{2k}D}N'$. Setting a progression as $Q'=a+a'q^k+(qq')^k\cdot[X]$, the above inequality is  indeed
\[
c\delta^{4k^3+20k^2+3}\frac{q}{\phi(q)}\frac{q'X}{\phi(q')}\leq\sum_{n\in Q'}\Lambda\cdot1_\A(n)-\delta\sum_{n\in Q'}\Lambda(n).
\]
On noting that $X\gg N^{7/12+\eps}$ whenever $\delta^{-2k^3-10k^2}q^k\ll(\log N)^A$ and $\frac{qq'}{\phi(q')\phi(q)}\gg\frac{qq'}{\phi(qq')}$, it can be deduced from Lemma \ref{short-ap}  that
\[
\sum_{n\in Q'}\Lambda\cdot1_{\A}(n)\geq\delta\Bigbrac{1+c\delta^{4k^3+22k^2}}\sum_{n\in Q'}.\Lambda(n).
\]
\end{proof}

We are now prepared to accomplish the proof of Theorem  \ref{main}.

\vspace{2mm}

\noindent\emph{Proof of Theorem \ref{main}.}

\vspace{2mm}

Suppose that $\A\subseteq \P_N$ has relative density $\delta=\frac{|\A|}{|\P_N|}$ and $\A$ lacks patterns $p_1,p_1+(p_2-1)^k$, where $p_1,p_2,p_1+(p_2-1)^k$ are prime numbers. One may take $\A'=\A\cap[\frac{N}{\log^2 N},N]$ as a subset set of $\A$, then for every element $n\in \A'$,
\[
\Lambda(n)=\log n\geq\log N-2\log\log N.
\]
As a consequence,
\[
\sum_{n\in \A}\Lambda(n)\geq\sum_{n\in \A'}\Lambda(n)\geq (\log N-2\log\log N)|\A'|.
\]
As is evident from the Prime Number Theorem that $|\A|\geq\delta|\mathcal P_N|\geq \delta\Bigbrac{\frac{N}{\log N}-\frac{N}{\log^2N}}$, the cardinality of $\A'$ is at least 
\[
|\A'|\geq \delta \bigbrac{\frac{1}{\log N}-\frac{1}{\log^2 N}}N.
\]
Therefore, by combining the aforementioned two expressions, we obtain
\[
\sum_{n\in [N]}\Lambda\cdot1_\A(n)=\delta'N,
\]
with
\begin{align}\label{delta'}
\delta'\geq \delta\bigbrac{\frac{1}{\log N}-\frac{1}{\log^2 N}}\bigbrac{\log N-2\log\log N}\geq \delta-O\bigbrac{\frac{\delta\log\log N+1}{\log N}}.	
\end{align}

The first application of Lemma \ref{non-pseudorandomness} leads us to that either there is an arithmetic progression $Q_1=a_1+q_1^k\cdot[X_1]\subseteq[N]$ with $(a_1,q_1)=1$, $q_1\ll\delta'^{-2k^3-10k^2}$ and $X_1\gg q_1^{1-k}(\log N)^{-2^{2k}D}N$ such that
\[
\sum_{n\in Q_1}\Lambda\cdot1_\A(n)\geq\delta'\Bigbrac{1+c\delta'^{4k^3+22k^2}}\sum_{n\in Q_1}\Lambda(n).
\]

In practice, the repeated application of Lemma \ref{non-pseudorandomness}  yields a sequence of progressions: 
\[
P_i=a_1+a_2q_1^k+\cdots+(q_1\cdots q_i)^k\cdot[X_i]
\]
such that the following events happen simultaneously
\begin{enumerate}
\item 
\[
\sum_{n\in P_i}\Lambda\cdot1_\A(n)=\delta_i\sum_{n\in P_i}\Lambda(n),
\] which satisfies $\delta_i\geq\delta'\Bigbrac{1+c\delta'^{4k^3+22k^2}}^i$;
\item $a_i$ is coprime with $q_i$ and $q_i\ll\delta_{i-1}^{-2k^3-10k^2}$ and $X_i\gg(\log N)^{-2^{2k}D}q_i^{1-k}X_{i-1}$;
\item the set $\A\cap P_i$ lacks configurations $p_1,p_1+(p_2-1)^k$;
\item $(q_1\cdots q_{i})^k\ll(\log N)^A$.
\end{enumerate}

However, this iteration cannot last too long---it must stop at some step $m\ll\delta'^{-4k^3-22k^2}$, since otherwise, the relative density of $\A$ in progression $P_m$ would surpass $1$. And we'll end up (the iteration) with condition (4) fails. Because of (2) and (4), we have
\[
\bigbrac{\delta'^{-2k^3-10k^2}}^{O(\delta'^{-4k^3-22k^2})}\gg(\log N)^A,
\]
 Taking logarithms of both sides yields that
\[
\delta'\ll(\log\log N)^{-\frac{1}{4k^3+23k^2}}.
\]
The proof completes by noting that $\delta\ll(\log\log N)^{-\frac{1}{4k^3+23k^2}}$ in view of (\ref{delta'}).
\qed

\vspace{3mm}

\appendix

\section{Proof of Lemma \ref{exceptional}}

We firstly record the following three theorems,  all of them are in \cite[Chapter 4]{RS}, also in the book \cite{Dav}.

\begin{theorem}[{\cite[Proposition 4.4]{RS}}]\label{rho}
Suppose that $D\geq2$, $\chi_D$ is a primitive character of modulus $q_0\leq D$,  $\rho\geq1-\frac{c}{\log 3D}$ is a real such that  $L(\rho,\chi_D)=0$, then we call $\chi_D$ the \emph{exceptional Dirichlet character at level $D$}, and we have
\[
(1-\rho)^{-1}\ll q_0^{1/2}(\log q_0)^2.
\]	
\end{theorem}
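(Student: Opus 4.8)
The plan is to split Theorem~\ref{rho} into two essentially independent facts: an exceptional real zero this close to $1$ can only come from a real (quadratic) character, and for such a character the zero is repelled from $1$ at a rate governed by $L(1,\chi_D)$, which Dirichlet's class number formula bounds below. I would not chase optimal constants; only the shape $(1-\rho)^{-1}\ll q_0^{1/2}(\log q_0)^2$ matters.

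\textbf{Step 1: reduction to real characters.} I would first show that if $\chi$ is a \emph{complex} primitive character modulo $q_0\le D$, then $L(s,\chi)$ has no real zero $\beta\ge 1-c/\log 3D$ for a suitable small absolute $c$. Such a $\beta$ is also a zero of $L(s,\bar\chi)$, and $\chi^2$ is non-principal (a complex $\chi$ cannot square to $\chi_0$). Feeding the pointwise inequality $\Lambda(n)\bigl(3+4\real\chi(n)+\real\chi^2(n)\bigr)\ge 0$ (nonnegative since $3+4\cos\theta+\cos2\theta=2(1+\cos\theta)^2\ge0$) summed against $n^{-\sigma}$ into the partial-fraction expansions of $-\zeta'/\zeta$, $-L'/L(\cdot,\chi)$ and $-L'/L(\cdot,\chi^2)$, and using $-\zeta'/\zeta(\sigma)=\tfrac1{\sigma-1}+O(1)$, $-\real\tfrac{L'}{L}(\sigma,\chi^2)\ll\log q_0D$, and the bound $-4\real\tfrac{L'}{L}(\sigma,\chi)\le-\tfrac{4}{\sigma-\beta}+O(\log q_0D)$ coming from the real zero, one obtains $0\le\tfrac{3}{\sigma-1}-\tfrac{4}{\sigma-\beta}+O(\log q_0D)$; taking $\sigma-1\asymp 1/\log 3D$ forces $1-\beta\gg 1/\log 3D$, a contradiction. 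Hence the character $\chi_D$ in the statement must be real.

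\textbf{Step 2: the real case.} Now $\chi_D$ is a real primitive character modulo $q_0$, so $\sigma\mapsto L(\sigma,\chi_D)$ is real-valued on $\mathbb R$, and $\rho\in[1-c/\log 3D,1)$ since $L(1,\chi_D)\neq0$. I would assemble three estimates. (i) $L(1,\chi_D)\gg q_0^{-1/2}$: by the class number formula, $L(1,\chi_D)$ is a positive absolute constant times $h$ (times a regulator, or $2\pi/w$, in the imaginary case) divided by $\sqrt{q_0}$, and $h\ge 1$; alternatively one avoids the class number by observing that $\zeta(s)L(s,\chi_D)=\sum_n a_nn^{-s}$ has $a_n=\sum_{d\mid n}\chi_D(d)\ge 0$ with $a_{m^2}\ge 1$, a Dirichlet series with non-negative coefficients and a simple pole at $s=1$ of residue $L(1,\chi_D)$, and a standard contour/positivity argument then gives $L(1,\chi_D)\gg q_0^{-1/2}$. (ii) $|L'(\sigma,\chi_D)|\ll(\log q_0)^2$ for $1-c/\log q_0\le\sigma\le 1$: write $L'(s,\chi_D)=-\sum_n\chi_D(n)(\log n)n^{-s}$, split at $n=q_0$, estimate the head by $\sum_{n\le q_0}(\log n)/n\ll(\log q_0)^2$ (since $n^{-\sigma}\ll n^{-1}$ for $n\le q_0$ in this $\sigma$-range), and estimate the tail by partial summation using the P\'olya--Vinogradov bound $|\sum_{n\le t}\chi_D(n)|\ll q_0^{1/2}\log q_0$, which contributes $O(1)$. (iii) the mean value theorem on $[\rho,1]$ produces $\sigma_0\in(\rho,1)$ with $L(1,\chi_D)=(1-\rho)L'(\sigma_0,\chi_D)$ and $L'(\sigma_0,\chi_D)>0$; moreover $\sigma_0>1-c/\log 3D>1-c/\log q_0$, so (ii) applies at $\sigma_0$. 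Combining,
\[
1-\rho=\frac{L(1,\chi_D)}{L'(\sigma_0,\chi_D)}\gg\frac{q_0^{-1/2}}{(\log q_0)^2},
\]
i.e. $(1-\rho)^{-1}\ll q_0^{1/2}(\log q_0)^2$, as required.

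\textbf{Expected main obstacle.} The only place arithmetic genuinely enters is the effective lower bound $L(1,\chi_D)\gg q_0^{-1/2}$ in Step 2(i): it relies on the class number being a positive integer (equivalently, on the non-negativity of the Dedekind-zeta coefficients), and it is exactly what keeps the final bound effective---Siegel's theorem would give the much stronger $L(1,\chi_D)\gg_\eps q_0^{-\eps}$ but ineffectively, which is useless here. By contrast the $3$-$4$-$1$ reduction in Step 1 and the derivative estimate in Step 2(ii) are routine once these positivity inputs are in place.
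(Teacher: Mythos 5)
The paper does not actually prove this statement: it is imported verbatim as \cite[Proposition 4.4]{RS}, and Appendix A only uses it as a black box in the derivation of Lemma \ref{exceptional}. So there is no in-paper argument to compare against; I can only assess your proposal on its own terms, and it is correct. Moreover, it is exactly the classical route that Ruzsa--Sanders themselves rely on: Step 1 is the standard $3$-$4$-$1$ argument showing a complex primitive character has no real zero within $c/\log 3D$ of $1$ (so the statement is vacuous unless $\chi_D$ is real), and Step 2 is Davenport's Chapter 14 argument combining the mean value theorem, the derivative bound $|L'(\sigma,\chi_D)|\ll(\log q_0)^2$ near $s=1$, and the effective lower bound $L(1,\chi_D)\gg q_0^{-1/2}$ from the class number formula. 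Your observation that $\sigma_0>\rho\geq 1-c/\log 3D\geq 1-c/\log q_0$ (because $q_0\leq D$), so that the derivative bound applies at $\sigma_0$, is the one place where care is needed, and you handle it correctly.

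One soft spot worth flagging: your ``alternative'' to the class number formula in Step 2(i) is stated too casually. The naive hyperbola-method evaluation of $\sum_{n\leq x}a_nn^{-1/2}$ for $a_n=\sum_{d\mid n}\chi_D(d)$ carries an error term of size $O(q_0^{1/2}\log q_0)$ coming from P\'olya--Vinogradov, which swamps the lower bound $\gg\log x$ unless one either smooths and shifts a contour (paying convexity bounds for $\zeta(1/2+it)L(1/2+it,\chi_D)$) or simply invokes Dirichlet's class number formula with $h\geq1$. Since you offer the class number formula as the primary justification, the proof stands, but the parenthetical ``standard contour/positivity argument'' should either be spelled out or dropped. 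Everything else (the positivity of $3+4\cos\theta+\cos2\theta$, the non-principality of $\chi^2$ for complex $\chi$, the splitting of $L'$ at $n=q_0$ with $n^{1-\sigma}\ll1$ in the head and partial summation against P\'olya--Vinogradov in the tail) is routine and correctly executed.
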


\begin{theorem}[{\cite[Theorem 4.5]{RS}}]\label{non-principal}
There is an absolute constant $c_1>0$ such that if $D\geq2$ and $\chi$ is a non-principal character of modulus $q\leq D$, then
\begin{enumerate}
\item if the exceptional Dirichlet character $\chi_D$ exists, $L(\rho,\chi_D)=0$, and $\chi$ is induced by $\chi_D$, then for any real $x\geq1$ we have
\[
 \psi(x,\chi):=\sum_{n\leq x}\Lambda(n)\chi(n))=-\frac{x^{\rho}}{\rho}+O\Bigbrac{x\exp\bigbrac{-\frac{c_1\log x}{\sqrt{\log x}+\log D}}(\log D)^2},
\]
where $\rho$ is the exceptional zero;
\item if the exceptional Dirichlet character $\chi_D$ does not exist, or $\chi$ is not induced by $\chi_D$, then for any real $x\geq1$ we have
\[
 \psi(x,\chi)\ll x\exp\bigbrac{-\frac{c_1\log x}{\sqrt{\log x}+\log D}}(\log D)^2.
\]	
\end{enumerate}
	
\end{theorem}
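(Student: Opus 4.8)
The plan is to prove this by the classical explicit-formula method, taking care that every implied constant stays absolute and the dependence on the modulus comes out exactly as advertised. First I would reduce to primitive characters: if $\chi$ is induced by the primitive character $\chi^{*}$ of conductor $q^{*}\mid q$, then
\[
\psi(x,\chi)-\psi(x,\chi^{*})=-\twosum{p\mid q}{p^{j}\leq x}(\log p)\chi^{*}(p^{j})\ll(\log q)(\log x)\ll(\log D)(\log x),
\]
which is absorbed into the claimed error term, so it suffices to treat $\chi$ primitive of modulus $q\leq D$. Next I would apply the truncated explicit formula for $L(s,\chi)$ (as in \cite{Dav}): for $2\leq T\leq x$,
\[
\psi(x,\chi)=-\twosum{\rho=\beta+i\gamma,\ L(\rho,\chi)=0}{0<\beta<1,\ |\gamma|\leq T}\frac{x^{\rho}}{\rho}+O\Bigbrac{\frac{x(\log qx)^{2}}{T}+\log x}.
\]

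The heart of the matter is then the classical zero-free region together with control of the exceptional zero. There is an absolute $c_{2}>0$ such that $L(s,\chi)\neq0$ in $\real(s)\geq1-c_{2}/\log\bigbrac{q(|t|+2)}$, with at most one exception: a single simple real zero $\rho$ of a single real primitive character $\chi_{D}$ of conductor $\leq D$ --- this is Landau's argument, which is precisely what makes the exceptional pair unique across all moduli up to $D$ and which also underlies Theorem \ref{rho}. Thus, for $|\gamma|\leq T$, every zero other than the possible $\rho$ satisfies $\beta\leq1-c_{2}/\log(DT)$, and with the standard local bound $\#\{\rho:|\gamma-u|\leq1\}\ll\log\bigbrac{q(|u|+2)}$ for the number of zeros I would estimate
\[
\twosum{|\gamma|\leq T}{\rho\ \text{non-exceptional}}\frac{x^{\beta}}{|\rho|}\ll x^{1-c_{2}/\log(DT)}\sum_{|\gamma|\leq T}\frac{1}{|\gamma|+1}\ll x^{1-c_{2}/\log(DT)}\bigbrac{\log DT}^{2}.
\]
When $\chi=\chi_{D}$ and the exceptional zero occurs, the single term $-x^{\rho}/\rho$ is pulled out by hand, which produces case (1); in every other situation there is no such term and we land in case (2). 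Finally I would choose $T=\exp\bigbrac{\sqrt{\log x}}$, so that $\log(DT)\ll\log D+\sqrt{\log x}$; then $x^{1-c_{2}/\log(DT)}\ll x\exp\bigbrac{-c_{2}\log x/(2\sqrt{\log x}+2\log D)}$, the Perron error $x(\log qx)^{2}/T$ is of the same size, and the surviving factor $(\log DT)^{2}\ll(\log D)^{2}\log x$ is absorbed into the exponential by shrinking $c_{2}$ to some $c_{1}>0$. This is exactly the stated estimate.

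The step I expect to be most delicate is the uniformity bookkeeping: one must verify that the implied constants in the zero-free region, in the zero-count, and in the Perron truncation are genuinely absolute and free of any dependence on $q$, and --- most importantly --- that Landau's argument really does isolate one and only one exceptional pair $(\chi_{D},\rho)$ among all moduli $q\leq D$, so that the term $-x^{\rho}/\rho$ can never be double-counted when $\chi$ ranges over characters induced by $\chi_{D}$. Once this theorem is in hand, combining it with the bound $(1-\rho)^{-1}\ll q_{0}^{1/2}(\log q_{0})^{2}$ of Theorem \ref{rho} is what will be needed downstream to deduce Lemma \ref{exceptional}, but the real work here is packaging otherwise standard prime-counting technology in a fully $D$-uniform form.
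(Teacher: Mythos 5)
This statement is imported verbatim from Ruzsa--Sanders (\cite[Theorem 4.5]{RS}); the paper gives no proof of its own, treating it as a black box alongside Theorems \ref{rho} and \ref{principal} in order to derive Lemma \ref{exceptional} in Appendix A. Your argument --- reduction to the primitive character, the truncated explicit formula, the Landau--Page zero-free region isolating a single exceptional pair $(\chi_D,\rho)$ uniformly over all moduli $q\leq D$, and the choice $T=\exp(\sqrt{\log x})$ --- is precisely the classical route by which the cited source (following \cite{Dav}) establishes the result, and the uniformity bookkeeping you flag as the delicate point is indeed the only real content; your sketch handles it correctly. So the proposal is sound and matches the proof underlying the citation; there is nothing in the paper itself to compare it against.
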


\begin{theorem}[{\cite[Theorem 4.6]{RS}}]\label{principal}
There is an absolute constant $c_2>0$ such that if $\chi_0$ is the principal character of modulus $q$, then for all real $x\geq1$ we have
\[
 \psi(x,\chi_0)=x+O\Bigbrac{x\exp(-c_2\sqrt{\log x})+\log q\log x}.
\]	
\end{theorem}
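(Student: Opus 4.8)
The plan is to reduce the statement to the case $q=1$, which is the Prime Number Theorem with the classical de la Vall\'ee Poussin error term, and then recall how that follows from the zero-free region of the Riemann zeta function. Since $\la(n)\chi_0(n)$ is supported on the prime powers $p^j$ with $p\nmid q$, one has the exact identity
\[
\psi(x,\chi_0)=\psi(x)-\sum_{p\mid q}\log p\,\bigfloor{\tfrac{\log x}{\log p}},
\]
where $\psi(x)=\sum_{n\leq x}\la(n)$. Each term of the sum is at most $\log x$, and the number of distinct prime divisors of $q$ is $\leq\log_2 q\ll\log q$, so the subtracted sum is $\Oh{\log q\log x}$, which is exactly the second error term in the statement. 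It therefore suffices to produce an absolute $c_2>0$ with $\psi(x)=x+\Oh{x\exp(-c_2\sqrt{\log x})}$, and (the claim being trivial for bounded $x$) we may assume $x$ large. Note that the modulus $q$ has now disappeared entirely, so the uniformity in $q$ asserted in the theorem is automatic.

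For the case $q=1$ I would run the standard contour argument. Fix a parameter $2\leq T\leq x$ and put $\sigma_0=1+\trecip{\log x}$. A truncated Perron formula gives
\[
\psi(x)=\frac{1}{2\pi i}\int_{\sigma_0-iT}^{\sigma_0+iT}\Bigbrac{-\frac{\zeta'}{\zeta}(s)}\frac{x^s}{s}\rd s+\Oh{\frac{x\log^2 x}{T}}
\]
up to admissible lower-order errors. Moving the contour to the left past the simple pole of $-\zeta'/\zeta$ at $s=1$, whose residue against $x^s/s$ is $x$, and using that $\zeta$ has no zeros in the region $\real s\geq 1-\frac{c}{\log(|t|+2)}$ together with the classical bound $\bigabs{\frac{\zeta'}{\zeta}(s)}\ll\log^2(|t|+2)$ just to the right of that region, the shifted vertical segment at $\real s=1-\frac{c}{\log T}$ and the two horizontal segments at height $\pm T$ contribute $\Oh{x^{1-c/\log T}\log^2 T+\frac{x\log^2 T}{T}}$. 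Hence
\[
\psi(x)=x+\Oh{\frac{x\log^2 x}{T}+x^{1-c/\log T}\log^2 T}.
\]
Choosing $T=\exp(\sqrt{\log x})$ makes $x/T=x\exp(-\sqrt{\log x})$ and $x^{1-c/\log T}=x\exp(-c\sqrt{\log x})$; shrinking the constant slightly to absorb the powers of $\log$, this gives $\psi(x)=x+\Oh{x\exp(-c_2\sqrt{\log x})}$, and combining with the first paragraph finishes the proof.

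The genuine content, and the step I expect to be the main obstacle, is the input to the contour shift: the de la Vall\'ee Poussin zero-free region $\zeta(\sigma+it)\neq 0$ for $\sigma\geq 1-\frac{c}{\log(|t|+2)}$ together with the bound $\frac{\zeta'}{\zeta}(s)\ll\log^2(|t|+2)$ in a slightly smaller region. The zero-free region is proved by feeding the elementary inequality $3+4\cos\theta+\cos 2\theta\geq 0$ into $\real\bigbrac{3\frac{\zeta'}{\zeta}(\sigma)+4\frac{\zeta'}{\zeta}(\sigma+it)+\frac{\zeta'}{\zeta}(\sigma+2it)}$, exploiting the pole at $s=1$ for the first term and, for the others, the one-sided lower bound for $-\real\frac{\zeta'}{\zeta}$ that comes from the Hadamard/partial-fraction expansion of $\zeta'/\zeta$; the $\zeta'/\zeta$ estimate then follows by Borel--Carath\'eodory applied to $\log\zeta$ on suitable discs together with the zero-counting bound $\#\{\rho:|\gamma-t|\leq 1\}\ll\log(|t|+2)$. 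All of these are classical facts (see, e.g., Davenport, Chapters 12--18), so in the actual write-up of the appendix they would simply be quoted; the only new bookkeeping is the elementary reduction of general $q$ to $q=1$ carried out in the first paragraph.
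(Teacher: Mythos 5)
Your proof is correct: the reduction $\psi(x,\chi_0)=\psi(x)+O(\log q\log x)$ followed by the prime number theorem with the de la Vall\'ee Poussin error term is exactly how this estimate is established in the sources the paper cites. Note that the paper itself offers no proof of this statement --- it is quoted directly from Ruzsa--Sanders (Theorem 4.6 there, ultimately Davenport) --- so your argument simply reproduces the standard classical proof behind that citation.
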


\vspace{3mm}

\noindent\emph{Proof of Lemma \ref{exceptional}.}

By making use of the orthogonality principle, one has
\[
\psi(x;q,a)=\frac{1}{\phi(q)}\sum_{\chi}\bar{\chi}(a)\sum_{n\leq x}\Lambda(n)\chi(n)=\frac{1}{\phi(q)}\sum_{\chi}\bar{\chi}(a)\psi(x;\chi),
\]
where the summation is over all Dirichlet characters $\chi$ of modulus $q$. Suppose that $\chi_0\pmod q$ is the principal character, one then has,
\[
\psi(x;q,a)=\frac{\bar{\chi_0}(a)}{\phi(q)}\psi(x,\chi_0)+\frac{1}{\phi(q)}\sum_{\chi\neq\chi_0}\bar{\chi}(a)\psi(x;\chi).
\]

Let $c_0=\min\set{c_1,c_2}$, the application of Theorem \ref{principal} gives us that
\[
\frac{\bar{\chi_0}(a)}{\phi(q)}\psi(x,\chi_0)=\frac{\bar{\chi_0}(a)x}{\phi(q)}+O\Bigbrac{x\exp(-c_0\sqrt{\log x})+\log q\log x}.
\]
Now we calculate the non-principle characters. If there is a character $\chi\pmod q$ induced by the exceptional character, namely $\chi_{D_0}\pmod{q_0}$, then $q_0\mid q$, thus, $q_0\leq D_0$ and it follows from Theorem \ref{rho} that $\chi_{D_0}$ is the exceptional character of level $D_0$ and $(1-\rho)^{-1}\ll q_0^{1/2}(\log q_0)^2$. The application of Theorem \ref{non-principal} for this character $\chi=\chi_0\chi_{D_0}$ shows that
\[
 \psi(x,\chi_0\chi_{D_0})=-\frac{x^{\rho}}{\rho}+O\Bigbrac{x\exp\bigbrac{-\frac{c_0\log x}{\sqrt{\log x}+\log D_0}}(\log D_0)^2};
\]
otherwise, let $q_0=D_0+1\leq D_1$ and let $\chi_{D_0}$ be a primitive character of modulus $q_0$. it follows from the second result of Theorem \ref{non-principal} that for all non-principal characters $\chi\pmod q$
\[
\psi(x,\chi)\ll x\exp\bigbrac{-\frac{c_0\log x}{\sqrt{\log x}+\log D_0}}(\log D_0)^2.
\]
 The combination of the above four expressions gives Lemma \ref{exceptional}.

\qed

\section{Restriction for prime numbers}

The main task of this appendix is to prove  Theorem \ref{thm2} (1).  The same as Lemma \ref{spec}, we'll define the large spectrum set. For any real number $0<\eta\ll1$,  the $\eta$-large spectrum for the Fourier transform of $\nu$ is defined to be the set
\[
\R_\eta=\bigset{\alpha\in\T:|\hat\nu(\alpha)|\geq\eta X}.
\]
As expected, if we can show that
\begin{align}\label{spectrum}
\text{meas} \R_\eta\ll_{\eps}\eta^{-2-\eps}X^{-1},
\end{align}
then by making use of dyadic argument we can prove Theorem \ref{thm2} (1). Indeed, assume that $p>2$ is a real number, and let $\eps=\eps(p)$ be sufficiently small, one has, for some absolute constant $C>0$,
\[
\int_\T|\hat\nu(\alpha)|^p\rd\alpha\leq\sum_{j\geq-C}\int_{\bigset{\alpha\in\T:\frac{X}{2^{j+1}}\leq|\hat\nu(\alpha)|\leq\frac{X}{2^{j}}}}|\hat\nu(\alpha)|^p\rd\alpha\ll_{\eps,p} X^{p-1}\sum_{j\geq-C}2^{j(p-2+\eps)}\ll_{\eps,p} X^{p-1}.
\]
And this is Theorem \ref{thm2} (1).

Noting that Parseval's identity provides a crude bound, that if $\eta\ll(\log N)^{-1/\eps}$ (\ref{spectrum}) follows. Indeed, since $|\nu|\leq\Lambda_{b,d}$ pointwise, applying Parseval's identity and pigeonhole principle successively, one has
\[
\norm{\hat\nu}_2^2=\norm{\nu}_2^2=\sum_{x\in[X]}|\nu(x)|^2\leq\norm{\Lambda_{b,d}}_\infty\sum_{x\in[X]}\Lambda_{b,d}(x)\ll X\log X.
\]
For another thing, it can be seen from the definition of $\eta$-large spectrum set that
\[
(\eta X)^2\text{meas}\R_\eta\leq\int_\R|\hat\nu(\alpha)|^2\rd\alpha\leq\int_\T|\hat\nu(\alpha)|^2\rd\alpha.
\]
On combining the above two inequalities to get that
\[
\text{meas}\R_\eta\leq\eta^{-2}(\log X)X^{-1}.
\]
Thus, we may assume that
\begin{align}\label{lowerb}
(\log X)^{-1/\eps}\ll\eta\ll1.
 \end{align}
Let $\alpha_1,\cdots,\alpha_R$ be $X^{-1}$-spaced points in $\R_\eta$, in order to prove (\ref{spectrum}) it suffices to show that
\begin{align}\label{r}
	R\ll_{\eps}\eta^{-2-\eps}.
\end{align}

In the next step, we would transfer the Fourier analysis from the uncertain function $\nu$ to $\Lambda_{b,d}$. For this reason, we are now going to partition $\T$ into major and minor arcs, and show that the Fourier transform of the majorant function $\hat\Lambda_{b,d}$  possesses logarithmic savings on the minor arcs. Let $\alpha\in\T$ be a frequency, we see from Fourier transform and the definition of $\Lambda_{b,d}$ that
\[
\hat\Lambda_{b,d}(\alpha)=\sum_{x\in[X]}\Lambda_{b,d}(x)e(\alpha x)=\sum_{x\in[X]}\frac{\phi(d)}{d}\Lambda(b+dx)e(\alpha x).
\]
Let 
\begin{align*}
\majors&=\bigcup_{q\leq Q}\bigset{\alpha\in\T:\norm{q\alpha}\ll Q/X},
\end{align*}
and the minor arcs $\minors$ are the complement of major arcs, i.e. $\minors=\T\backslash\majors$, where $Q=(\log X)^{5C}$ for some large number $C>0$. 

\begin{lemma}[Major-arc asymptotic for $\hat\Lambda_{b,d}$]\label{majorla}
Suppose that $\alpha=\frac{a}{q}+\beta\in\majors$   and $1\leq b\leq d\leq\exp\bigbrac{c'\sqrt{\log X}}$ for some $0<c'<1$, then
\[
|\hat\Lambda_{b,d}(\alpha)|\ll q^{-1+\eps}X(1+X|\beta|)^{-1}+X\exp\bigbrac{-c\sqrt{\log X}}(1+X|\beta|).
\]

\end{lemma}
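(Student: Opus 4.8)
The plan is to prove Lemma \ref{majorla} as the degree‑one analogue of Lemma \ref{major lemma}, via the same two moves. First I would evaluate $\hat\la_{b,d}(a/q)$, the case $\beta=0$, by grouping the summation variable into residue classes modulo $q$ and invoking the exceptional‑pair result Lemma \ref{exceptional}. Then I would pass to a general frequency $\alpha=a/q+\beta\in\majors$ by Abel summation, which inserts the decay factor $(1+X|\beta|)^{-1}$ into the main term and the growth factor $(1+X|\beta|)$ into the error.

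For the first move, write $\hat\la_{b,d}(a/q)=\frac{\phi(d)}{d}\sum_{x\in[X]}\la(b+dx)e(ax/q)$, substitute $n=b+dx$, and group by the class of $x$ modulo $q$; since $x\equiv r\pmod q$ forces $n\equiv dr+b\pmod{dq}$, this gives
\[
\hat\la_{b,d}(a/q)=\frac{\phi(d)}{d}\sum_{r(q)}e\bigbrac{\tfrac{ar}{q}}\psi\bigbrac{b+dX;dq,dr+b}+O(\log X),
\]
the $O(\log X)$ absorbing the single term $\la(b)$ that $\psi$ counts but the left side does not. Since $d\leq\exp(c'\sqrt{\log X})$ and $q\leq Q=(\log X)^{5C}$, we have $dq\leq\exp(2c'\sqrt{\log X})$, so Lemma \ref{exceptional} applies with, say, $D_0=\exp(2c'\sqrt{\log X})$ and $D_1=\exp(4c'\sqrt{\log X})$: it replaces each $\psi(b+dX;dq,dr+b)$ by $\frac{\bar\chi_0(dr+b)(b+dX)}{\phi(dq)}$, an exceptional term $E(b+dX;dq,dr+b)$, and an error of size $\ll dX\exp(-c\sqrt{\log X})$ (noting $\log(dX)\asymp\log X$ and absorbing $(\log D_1)^2$). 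Because $(b,d)=1$, the principal character mod $dq$ satisfies $\bar\chi_0(dr+b)=1$ exactly when $(dr+b,q)=1$, so the main term contributes $\frac{\phi(d)}{\phi(dq)}C(q;a,d)(b+dX)$ with $C(q;a,d)=\sum_{r(q),\,(dr+b,q)=1}e(ar/q)$; combining the elementary inequality $\frac{\phi(d)}{\phi(dq)}\leq\phi(q)^{-1}$ with Lemma \ref{cq} in the case $k=1$, which gives $|C(q;a,d)|\ll q^{\eps}$, bounds this by $O(q^{-1+\eps}X)$. The exceptional term is dispatched exactly as in Lemma \ref{major lemma}: since $1-\rho\gg D_0^{-1/2}(\log D_0)^{-2}\gg\exp(-c\sqrt{\log X})$ and $d\leq\exp(c'\sqrt{\log X})$, one has $(dX)^{\rho-1}\leq1$, so after summing over $r$ it is dominated by the main term and only affects the implied constant; the error term, summed over the $\leq q$ classes and multiplied by $\phi(d)/d\leq1$, is $\ll qd\,X\exp(-c\sqrt{\log X})\ll X\exp(-c_1\sqrt{\log X})$ once $c'$ is small enough in terms of $c_0$. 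This yields $\hat\la_{b,d}(a/q)=\frac{\phi(d)}{\phi(dq)}C(q;a,d)(b+dX)+O\bigbrac{X\exp(-c\sqrt{\log X})}$, the claim at $\beta=0$.

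For the second move, write $\hat\la_{b,d}(a/q+\beta)=\sum_{x\leq X}a_x e(\beta x)$ with $a_x=\frac{\phi(d)}{d}\la(b+dx)e(ax/q)$ and $A(t)=\sum_{x\leq t}a_x$, so partial summation gives $\hat\la_{b,d}(\alpha)=A(X)e(\beta X)-2\pi i\beta\int_1^X A(t)e(\beta t)\rd t$. Running the first move with $b+dX$ replaced by $b+dt$ (and using the trivial bound $|A(t)|\ll dX^{1/2}$ for $t\leq X^{1/2}$, the range where Lemma \ref{exceptional} gives no saving) shows $A(t)=\frac{\phi(d)}{\phi(dq)}C(q;a,d)(t+\theta)+O\bigbrac{X\exp(-c\sqrt{\log X})}$ uniformly for $1\leq t\leq X$, for some $0<\theta\leq1$, up to the dominated exceptional term. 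Inserting the linear main part and integrating by parts once more collapses its contribution to $\frac{\phi(d)}{\phi(dq)}C(q;a,d)\bigbrac{\int_1^X e(\beta t)\rd t+O(1)}$; since $\int_1^X e(\beta t)\rd t\ll\min\set{X,|\beta|^{-1}}\ll X(1+X|\beta|)^{-1}$ and $|\beta|\ll Q/X$ makes the residual $O(1)$ admissible, this is $\ll q^{-1+\eps}X(1+X|\beta|)^{-1}$. The $O\bigbrac{X\exp(-c\sqrt{\log X})}$ term in $A(t)$ contributes $\ll X\exp(-c\sqrt{\log X})(1+X|\beta|)$ through the two pieces of the identity, and adding the two contributions gives the claimed estimate.

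The hard part is bookkeeping rather than anything conceptual: every error term must be kept below the target $q^{-1+\eps}X(1+X|\beta|)^{-1}$ while the modulus $d$ is allowed to range up to $\exp(c'\sqrt{\log X})$, well beyond the Siegel--Walfisz range of Lemma \ref{sw}, which is precisely why Lemma \ref{exceptional} is used and why $c'$ must be a sufficiently small absolute constant comparable to $c_0$. The one genuinely delicate input is the potential Siegel-zero term $E$ in Lemma \ref{exceptional}; as in Lemma \ref{major lemma}, the inequality $(dX)^{\rho-1}\leq1$ shows it never exceeds the main term, so no new ingredient is needed.
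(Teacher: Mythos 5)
Your proposal is correct and follows essentially the same route as the paper's Appendix~B argument: split into residue classes modulo $q$ and reduce to $\psi(b+dX;dq,dr+b)$, apply Lemma \ref{exceptional} (with the exceptional-zero term dominated because $x^{\rho-1}\leq1$), bound the complete sum via Lemma \ref{cq} with $k=1$ together with $\phi(d)/\phi(dq)\leq\phi(q)^{-1}$, and pass to $\beta\neq0$ by Abel summation. The only blemish is a harmless bookkeeping slip: after multiplying by the prefactor $\phi(d)/d$ the main term should be $\frac{\phi(d)}{\phi(dq)}C(q;a,d)\bigl(X+b/d\bigr)$ rather than $\frac{\phi(d)}{\phi(dq)}C(q;a,d)(b+dX)$, but the bound $O(q^{-1+\eps}X)$ you extract from it is the correct one.
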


\begin{proof}
Due to the proof of this lemma is conceptually the same as Lemma \ref{major lemma}, we'd like to omit some details. If $\beta=0$ and $\alpha=\frac{a}{q}$,
\[
\hat\Lambda_{b,d}(a/q)=\frac{\phi(d)}{d}\sum_{r(q)\atop(b+rd,q)=1}e\bigbrac{ar/q}\Bigset{\psi(b+dX;dq,b+rd)-\psi(b+d;dq,b+rd)}.
\]

On noticing that the discrete interval $[b+d]$ contains at most one element which is congruent to $b+rd$ modulus $dq$, thereby,
\[
\hat\Lambda_{b,d}(a/q)=\frac{\phi(d)}{d}\sum_{r(q)\atop(b+rd,q)=1}e\bigbrac{ar/q}\psi(b+dX;dq,b+rd)+O(q).
\] 
Setting $c'=c_0/100$, $D_0=\exp\bigbrac{c'\sqrt{\log X}}$ and $D_1=\exp\bigbrac{2c'\sqrt{\log X}}$ in Lemma \ref{exceptional}, it then follows from summation by parts and Lemma \ref{exceptional} that for $\alpha=\frac{a}{q}+\beta$, when $q_0|q$,
\begin{multline*}
\hat\Lambda_{b,d}(\alpha)=\frac{\phi(d)}{\phi(dq)}\sum_{r(q)\atop(b+rd,q)=1}e\bigbrac{ar/q}\biggset{\int_1^Xe(\beta t)\rd t-\bar\chi(b+dr)\int_1^X(b+dt)^{\rho-1}e(\beta t)\rd t}\\
+O\Bigbrac{X\exp\bigbrac{-c\sqrt{\log X}}(1+X|\beta|)};
\end{multline*}
and when $q_0\nmid q$,
\[
\hat\Lambda_{b,d}(\alpha)=\frac{\phi(d)}{\phi(dq)}\sum_{r(q)\atop(b+rd,q)=1}e\bigbrac{ar/q}\int_1^Xe(\beta t)\rd t+O\Bigbrac{X\exp\bigbrac{-c\sqrt{\log X}}(1+X|\beta|)}.
\]
We'll finish the proof by making use of Lemma \ref{exceptional} and Lemma \ref{cq}, which is in a similar path of Lemma \ref{major lemma}.

\end{proof}

The minor arcs analysis is conceptually the same as Lemma \ref{minorsd}.  Again, we outline the main steps.

\begin{lemma}[Minor arcs estimates]\label{minor}
\[
\sup_{\alpha\in\minors}\bigabs{\hat\Lambda_{b,d}(\alpha)}\ll X(\log X)^{-C}.
\]	

\begin{proof}
It is immediate from Fourier transform as well as the definition of $\Lambda_{b,d}$ that
\[
\hat\Lambda_{b,d}(\alpha)=\frac{\phi(d)}{d}\sum_{x\in[X]}\Lambda(b+dx)e(x\alpha).
\]
After changing variables $b+dx\mapsto x$, we have
\[
\hat\Lambda_{b,d}(\alpha)=\frac{\phi(d)}{d}e\Bigbrac{-\frac{b\alpha}{d}}\sum_{x\in[b,b+dX]\atop x\equiv b\pmod d}\Lambda(x)e\Bigbrac{\frac{\alpha x}{d}}.
\]
On noting that $b$ is quite small compared with $X$, to prove this lemma it suffices to show that
\begin{align}\label{bd}
\biggabs{\sum_{x\in[b+dX]\atop x\equiv b\pmod d}\Lambda(x)e\Bigbrac{\frac{\alpha x}{d}}}\ll X(\log X)^{-C}	
\end{align}
for all frequencies $\alpha\in\minors$. And the proof is a modification of Lemma \ref{minorsd} and \cite[Lemma 4.9]{Gr05a}. To begin with, it follows from \cite[formula (3)]{Gr05b} that
\begin{multline*}
\sum_{x\in[b+dX]\atop x\equiv b\pmod d}\Lambda(x)e\Bigbrac{\frac{\alpha x}{d}}=\sum_{x\leq U\atop x\equiv b\pmod d}\Lambda(x)e\Bigbrac{\frac{\alpha x}{d}}+\threesum{xy\in[b+dX]}{x\leq U}{xy\equiv b\pmod d}\mu(x)(\log y)e\Bigbrac{\frac{\alpha xy}{d}}\\
+\threesum{xy\in[b+dX]}{x\leq U^2}{xy\equiv b\pmod d}f(x)e\Bigbrac{\frac{\alpha xy}{d}}+\threesum{xy\in[b+dX]}{x,y> U}{xy\equiv b\pmod d}\mu(x)g(y)e\Bigbrac{\frac{\alpha xy}{d}},
\end{multline*}
where $f,g$ are arithmetic functions defined in \cite[formula (4-5)]{Gr05b} satisfying $\norm{f}_\infty,\norm{g}_\infty\ll\log X$. Practically, the above four terms can be re-grouped and departed into the following type \uppercase\expandafter{\romannumeral1} and type \uppercase\expandafter{\romannumeral2} sums,
\[
\threesum{xy\in[b+dX]}{x\sim L,y\geq T}{xy\equiv b\pmod d}f(x)e\Bigbrac{\frac{\alpha xy}{d}}
\]  
and
\[
\threesum{xy\in[b+dX]}{x\sim L}{xy\equiv b\pmod d}f(x)g(y)e\Bigbrac{\frac{\alpha xy}{d}}.
\]

Next, we turn our attention to the behavior of frequencies $\alpha$ in minor arcs temporarily. Given Dirichlet's approximate theorem, for any frequency $\alpha\in\T$, there always exists an integer $q\leq X/Q$ such that $\norm{q\alpha}\ll Q/X$. Thus, when $\alpha$ belongs to minor arcs $\minors$, we must have
\begin{align}\label{qq}
Q\leq q\leq X/Q.
\end{align}
The proof of \cite[Lemma 4]{Gr05b} tells us that (\ref{bd}) follows from the following two claims and (\ref{qq}).
 
\subsection*{Claim 1} Suppose that $\bigabs{\alpha-\frac{a}{q}}\leq\frac{1}{q^2}$ with $(a,q)=1$ and $1\leq a\leq q$. Suppose also that $X\geq L$, then we have
\[
\threesum{xy\in[b+dX]}{x\sim L,y\geq T}{xy\equiv b\pmod d}f(x)e\Bigbrac{\frac{\alpha xy}{d}}\ll(\log X)(\log q)\norm{f}_\infty\bigbrac{\frac{X}{q}+L+q}.
\]
\subsection*{Claim 2} Suppose that $\bigabs{\alpha-\frac{a}{q}}\leq\frac{1}{q^2}$ with $(a,q)=1$ and $1\leq a\leq q$. Suppose also that $X\geq L$, then we have
\[
\threesum{xy\in[b+dX]}{x\sim L}{xy\equiv b\pmod d}f(x)g(y)e\Bigbrac{\frac{\alpha xy}{d}}\ll\norm{f_\infty}\norm{g}_\infty X(\log q)\Bigbrac{\frac{1}{q}+\frac{L}{X}+\frac{q}{X}}^{1/4}.
\]
To prove Claim 1, we  notice that
\begin{multline*}
\Biggabs{\threesum{xy\in[b+dX]}{x\sim L,y\geq T}{xy\equiv b\pmod d}f(x)e\Bigbrac{\frac{\alpha xy}{d}}}\leq \sum_{x\sim L\atop(x,d)=1}f(x)\biggabs{\sum_{T\leq y\leq\frac{b+dX}{x}\atop y\equiv bx^{-1}\pmod d}e\Bigbrac{\frac{\alpha xy}{d}}}\\
\ll\norm{f}_\infty\sum_{x\sim L}\Bigabs{\sum_{\frac{T}{d}\leq y\leq \frac{X}{x}}e(xy\alpha)}\ll\norm{f}_\infty\sum_{x\sim L}\min\bigset{\frac{X}{x},\norm{x\alpha}^{-1}}.
\end{multline*}
Thus, Claim 1 follows from \cite[Lemma 2]{Gr05b}. It remains to prove Claim 2. One may obtain from an application of Cauchy-Schwarz inequality that
\[
\Biggabs{\threesum{xy\in[b+dX]}{x\sim L}{xy\equiv b\pmod d}f(x)g(y)e\Bigbrac{\frac{\alpha xy}{d}}}^2\leq\sum_{x\sim L}f(x)^2\sum_{s\sim L\atop(x,d)=1}\sum_{y_1,y_2\leq\frac{b+dX}{x}\atop y_1,y_2\equiv bx^{-1}\pmod d}g(y_1)g(y_2)e\Bigbrac{\frac{\alpha x(y_1-y_2)}{d}}.
\]
Changing variables and exchanging the order of summation yields that above expression is bounded by
\[
L\norm{f}_\infty^2\norm{g}_\infty^2\sum_{y_1,y_2\leq\frac{X}{L}}\Bigabs{\sum_{x\sim L\atop (x,d)=1}e(x(y_1-y_2)\alpha)}.
\]
After square (both sides) and another application of Cauchy-Schwarz inequality gives
\begin{multline*}
\Biggabs{\threesum{xy\in[b+dX]}{x\sim L}{xy\equiv b\pmod d}f(x)g(y)e\Bigbrac{\frac{\alpha xy}{d}}}^4\ll\norm{f}_\infty^4\norm{g}_\infty^4X^2\sum_{y_1,y_2\leq\frac{X}{L}}\sum_{x\sim L\atop(x,d)=1}\sum_{t\leq 2L}e(t(y_1-y_2)\alpha)\\
\ll	\norm{f}_\infty^4\norm{g}_\infty^4X^3\sum_{j\leq\frac{X}{L}}\min\bigset{L,\norm{j\alpha}^{-1}}.
\end{multline*}
One then deduces from \cite[Lemma 1]{Gr05b} that
\[
\Biggabs{\threesum{xy\in[b+dX]}{x\sim L}{xy\equiv b\pmod d}f(x)g(y)e\Bigbrac{\frac{\alpha xy}{d}}}^4\ll\norm{f}_\infty^4\norm{g}_\infty^4X^4(\log q)\Bigbrac{\frac{1}{q}+\frac{L}{X}+\frac{q}{X}}.
\]
\end{proof}

\end{lemma}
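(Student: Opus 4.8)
The plan is to adapt the minor-arc treatment of Lemma~\ref{minorsd} to the genuine prime sum $\hat\la_{b,d}$, via Vaughan's identity much as in \cite[Lemma~4.9]{Gr05a}. First I would unfold $\hat\la_{b,d}(\alpha)=\frac{\phi(d)}{d}\sum_{x\in[X]}\la(b+dx)e(x\alpha)$ and substitute $b+dx\mapsto x$, which turns it into $\frac{\phi(d)}{d}e(-b\alpha/d)\sum_{x\leq b+dX,\,x\equiv b\,(d)}\la(x)e(\alpha x/d)$. Since $b\leq d\leq\exp(c'\sqrt{\log X})$ is negligible next to $X$ and $\phi(d)/d\leq1$, the lemma reduces to the estimate~(\ref{bd}), namely $\bigabs{\sum_{x\leq b+dX,\,x\equiv b\,(d)}\la(x)e(\alpha x/d)}\ll X(\log X)^{-C}$ uniformly for $\alpha\in\minors$.

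Next I would expand the von Mangoldt sum by Vaughan's identity with a parameter $U$ equal to a suitable small fixed power of $X$ (the decomposition recorded in \cite[formula~(3)]{Gr05b}): this produces a negligible segment $x\leq U$, together with type~I sums $\twosum{xy\leq b+dX,\,x\leq U^2}{xy\equiv b\,(d)}f(x)e(\alpha xy/d)$ with $\norm{f}_\infty\ll\log X$, and a type~II sum $\twosum{xy\leq b+dX,\,x,y>U}{xy\equiv b\,(d)}\mu(x)g(y)e(\alpha xy/d)$ with $\norm{g}_\infty\ll\log X$. After a dyadic splitting of the outer variable into ranges $x\sim L$, everything rests on two claims, as in \cite[Lemma~4]{Gr05b}: a type~I bound $\ll(\log X)(\log q)\norm{f}_\infty(X/q+L+q)$, and a type~II bound whose fourth power is $\ll\norm{f}_\infty^4\norm{g}_\infty^4 X^4(\log q)(1/q+L/X+q/X)$, both under a rational approximation $|\alpha-a/q|\leq q^{-2}$ with $(a,q)=1$.

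For the type~I claim the key move is to carry out the inner sum over $y$ along its arithmetic progression modulo $d$ by writing $y=bx^{-1}+dy'$; then $e(\alpha xy/d)=e(\alpha bx^{-1}/d)\,e(\alpha xy')$, the denominator $d$ disappears, the inner sum is $O(\min\{X/x,\norm{x\alpha}^{-1}\})$, and summing over $x\sim L$ by \cite[Lemma~2]{Gr05b} gives the stated bound. For the type~II claim, two applications of Cauchy--Schwarz---first separating the $x$- and $y$-variables, then squaring again to linearise the phase---reduce matters to $\sum_{j\leq X/L}\min\{L,\norm{j\alpha}^{-1}\}$, which \cite[Lemma~1]{Gr05b} controls. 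Finally Dirichlet's theorem supplies, for every $\alpha\in\minors$, a denominator $q$ with $Q\leq q\leq X/Q$; inserting this, and noting that after Vaughan the ranges obey $L\leq U^2$ in the type~I case and $U<L<X/U$ in the type~II case, the governing quantities $X/q+L+q$ and $1/q+L/X+q/X$ are $\ll X\,Q^{-1+o(1)}$ and $\ll Q^{-1+o(1)}$ respectively, so that $Q=(\log X)^{5C}$ yields the asserted saving $X(\log X)^{-C}$.

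The heaviest computation is the type~II estimate, where the double Cauchy--Schwarz has to be arranged so that the congruence $xy\equiv b\,(d)$ is preserved at every stage and the final one-dimensional exponential sum emerges cleanly; this is routine but bookkeeping-intensive, and is where I would be most careful. The only conceptually new feature compared with the classical minor-arc analysis for primes in progressions is that the modulus $d$ is allowed to be as large as $\exp(c'\sqrt{\log X})$ rather than a power of $\log X$---but this turns out to be harmless, because once the congruences have been resolved by the substitutions above the exponential is $e(\alpha xy')$ with no $d$ in it, the Weyl-sum inputs \cite[Lemmas~1--2]{Gr05b} apply with the same $\alpha$, and $d$ survives only through the prefactor $\phi(d)/d\leq1$ and through thinning each summation range by a factor $d$, both already accounted for. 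One must also check, exactly as in Lemma~\ref{minorsd}, that discarding the initial segment $[b+d]$ and the Vaughan range $x\leq U$ costs only $O(q)=O(X^{o(1)})$, which is absorbed into the error term.
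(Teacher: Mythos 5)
Your proposal is correct and follows essentially the same route as the paper: the same reduction to the congruence-restricted von Mangoldt sum, the same Vaughan decomposition from \cite[formula (3)]{Gr05b}, the same two type I/type II claims proved respectively by resolving the progression modulo $d$ and by a double Cauchy--Schwarz, with the final saving coming from Dirichlet's theorem and \cite[Lemmas 1, 2, 4]{Gr05b}. Your explicit check that the ranges of $L$ and the bounds $Q\leq q\leq X/Q$ deliver the $(\log X)^{-C}$ saving is slightly more detailed than the paper, which delegates that step to \cite[Lemma 4]{Gr05b}, but the argument is the same.
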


\vspace{2mm}

We are now left to verify  (\ref{r}). For $1\leq r\leq R$, let $c_r\in\set{z\in\C:|z|=1}$ be a number such that $c_r\hat\nu(\alpha_r)=|\hat\nu(\alpha_r)|$. And let $l_x(1\leq x\leq X)\in\C$ be a sequence such that $|l_x|\leq1$ and $l_x\Lambda_{b,d}(x)=\nu(x)$. Routinely, as in Section 5, one has
\begin{multline*}
R^2\eta^2X^2	\leq\biggbrac{\sum_{r\in[R]}|\hat\nu(\alpha_r)|}^2\ll\biggabs{\sum_{r\in[R]}c_r\sum_{x\in[X]}l_x\Lambda_{b,d}(x)e(x\alpha_r)}^2\\
\ll X\sum_{x\in[X]}\biggabs{\sum_{r\in[R]}c_r\Lambda_{b,d}(x)e(x\alpha_r)}^2\ll X\sum_{r,r'\in[R]}\Bigabs{\sum_{x\in[X]}\Lambda_{b,d}(x)e\biggbrac{(\alpha_r-\alpha_{r'})x}},
\end{multline*}
just on noticing that $\sum_{x\in[X]}l_x^2\Lambda_{b,d}(x)\ll X$.
It then follows from Fourier transform that 
\[
R^2\eta^2X\ll\sum_{r,r'\in[R]}\bigabs{\hat\Lambda_{b,d}(\alpha_r-\alpha_{r'})}.
\]
Put $\gamma=1+\eps$, utilizing H\"older's inequality one  then has
\[
R^{2}\eta^{2\gamma}X^\gamma\ll\sum_{r,r'\in[R]}\bigabs{\hat\Lambda_{b,d}(\alpha_r-\alpha_{r'})}^\gamma.
\] 
Joint Lemma \ref{majorla}, Lemma \ref{minor}, (\ref{lowerb}) and putting $\widetilde Q=\eta^{-1-\eps}$ gives
\[
\eta^{2\gamma}R^2\ll\sum_{q\leq \widetilde Q}\sum_{a(q)\atop(a,q)=1}\sum_{1\leq r,r'\leq R}q^{-\gamma+\eps}(1+X\norm{\alpha_r-\alpha_{r'}-a/q})^{-\gamma}\ll\sum_{1\leq r,r'\leq R}G(\alpha_r-\alpha_{r'}),
\]
where 
\[
G(\alpha)=\sum_{q\leq \widetilde Q}\sum_{a(q)}q^{-\gamma+\eps}F(\alpha-a/q)
\]
and
\[
F(\beta)=(1+X|\sin\beta|)^{-\gamma}.
\]
We are now in the position of \cite[Eq.(4.16)]{Bou89}, just need to replace $N^2$ by $X$, replace $\delta$ by $\eta$. Following from Bourgain's proof, on noting $\gamma=1+\eps$, one then has $R\ll_\eps \eta^{-2-\eps}$.

\section{Estimation of the singular series (\ref{singular-series})}

Since the idea is rather similar to the book \cite[Section 8]{Hua}, the proof here would be a bit rough. On writing
\[
A(r)=\biggbrac{\frac{\phi(q)}{\phi(qr)}}^3\sum_{1\leq b\leq r\atop(b,r)=1}\biggabs{\sum_{s(r)\atop(sq+a,r)=1}e\bigbrac{\frac{sb}{r}}}^2\Bigbrac{\sum_{s(r)\atop(sq+1,r)=1}e\bigbrac{\frac{s^kb}{r}}},
\]
it is multiplicative. Suppose that $(a,q)=1$ and $Q=(\log N)^A$ for some large $A>1$,  the goal is to show that  
\[
\sum_{r\leq Q}A(r)\gg1.
\]

Assume for a moment that $r=p^t$, then if $p|q$, $(sq+a,p)=(a,p)=1$ and $(sq+1,p)=1$ holds for all choices of  $s\pmod {p^t}$ whenever $(a,q)=1$, and thus, in this case,
\[
A(p^t)=\sum_{1\leq b\leq p^t\atop(b,p)=1}\biggabs{\sum_{s(p^t)}e\bigbrac{\frac{sb}{p^t}}}^2\Bigbrac{\sum_{s(p^t)}e\bigbrac{\frac{s^kb}{p^t}}}=0.
\]
If $p\nmid q$ and degree $t\geq 2$, we may write $s\pmod {p^t}$ as the sum $s=s_1p^{t-1}+s_2$, and when $(b,p)=1$ we have
\[
\sum_{s(p^t)\atop(sq+a,p)=1}e\bigbrac{\frac{sb}{p^t}}=\sum_{s_1(p)}\sum_{s_2(p^{t-1})\atop(s_2q+a,p)=1}e\Bigbrac{\frac{s_1b}{p}+\frac{s_2b}{p^{t}}}=0,
\]
which means that $A(p^t)\equiv0$ whenever degree $t\geq 2$. 
Putting the above analysis together one thus has
\begin{align}\label{r-up-q}
\sum_{r\leq Q}A(r)=\prod_{p\leq Q\atop p\nmid q}\bigbrac{1+A(p)}+O\biggbrac{\threesum{r>Q}{r:\Box-\mathrm{free}}{p|r\Rightarrow p\leq Q,p\nmid q}|A(r)|},
\end{align}
where 
\[
A(p)=\phi(p)^{-3}\sum_{1\leq b\leq p\atop(b,p)=1}\biggabs{\sum_{s(p)\atop(sq+a,p)=1}e\bigbrac{\frac{sb}{p}}}^2\Bigbrac{\sum_{s(p)\atop(sq+1,p)=1}e\bigbrac{\frac{s^kb}{p}}}\qquad (p\nmid q).
\] 
We shall deal with the big O-term and the ``main term"  successively, and in the following, we always suppose that primes $p$ satisfy $p\nmid q$. Firstly, a modification of the proof of \cite[Lemma 8.5]{Hua} shows that when $(b,p)=1$,
\[
\biggabs{\sum_{s(p)\atop(sq+1,p)=1}e\bigbrac{\frac{s^kb}{p}}}\ll p^{1/2+\eps};
\]
besides, under the same condition, we also have
\[
\biggabs{\sum_{s(p)\atop(sq+a,p)=1}e\bigbrac{\frac{sb}{p}}}\leq\biggabs{\sum_{s(p)\atop(s,p)=1}e\bigbrac{\frac{sb}{p}}}+\biggabs{\sum_{s(p)\atop p|(sq+a)}e\bigbrac{\frac{sb}{p}}}\leq2.
\]
As a consequence,
\begin{align}\label{ap}
|A(p)|\ll p^{-3/2+\eps}.
\end{align}
Therefore, the big O-term in (\ref{r-up-q}) can be bounded up by $o(1)$, in practice since $A(r)$ is multiplicative, 
\[
\threesum{r>Q}{r:\Box-\mathrm{free}}{p|r\Rightarrow p\leq Q,p\nmid q}|A(r)|\ll\sum_{r>Q\atop r:\Box-\mathrm{free}}\prod_{p|r\atop p\nmid q}|A(p)|\ll\sum_{r>Q}r^{-3/2+\eps}\ll Q^{-1/2+\eps}.
\]
By substituting the above inequality into (\ref{r-up-q}), we may find that  it is sufficient  to show that for some large constant $C>1$
\begin{align}\label{p-c-q}
\prod_{p\leq Q\atop p\nmid q}\bigbrac{1+A(p)}=\prod_{p\leq C\atop p\nmid q}\bigbrac{1+A(p)}\prod_{C\leq p\leq Q\atop p\nmid q}\bigbrac{1+A(p)}\gg 1.
\end{align}

In order to deal with this, let $M(p)$ count the number of solutions of the following equation
\[
x_1-x_2+x_3^k\equiv 0\pmod p
\]
with variables $x_1,x_2,x_3\pmod p$ and $p\nmid(qx_1+a)(qx_2+a)(qx_3+1)$. Using orthogonality, one has 
\begin{align*}
pM(p)&=\sum_{h(p)}\biggabs{\sum_{x(p)\atop(xq+a,p)=1}e\bigbrac{\frac{xh}{p}}}^2\Bigbrac{\sum_{x(p)\atop(xq+1,p)=1}e\bigbrac{\frac{x^kh}{p}}}\\
&=\Bigset{\sum_{h(p)\atop(h,p)=1}+\sum_{h(p)\atop(h,p)=p}}\biggabs{\sum_{x(p)\atop(xq+a,p)=1}e\bigbrac{\frac{xh}{p}}}^2\Bigbrac{\sum_{x(p)\atop(xq+1,p)=1}e\bigbrac{\frac{x^kh}{p}}}\\
&=\phi(p)^3(A(p)+1),
\end{align*}
just on recalling the definition of $A(p)$. Thus, to prove (\ref{p-c-q}) it suffices to establish that
\[
\prod_{p\leq C\atop p\nmid q}\frac{p}{\phi(p)^3}M(p)\prod_{C\leq p\leq Q\atop p\nmid q}\bigbrac{1+A(p)}\gg 1.	
\]
The next step is to show that $M(p)\geq 1$  for primes $p\nmid q$. In practice, when $p\geq 3$, on noting that $\set{x\pmod p,p\nmid(qx+a)}$ runs over $\phi(p)$ residue classes modulus $p$, the using of \cite[Lemma 8.7]{Hua} yields that the sequence $\set{x_1+x_2\pmod p,p\nmid(qx_1+a)(qx_2+a)}$ runs over  at least $\min\set{2\phi(p)-1,p}\geq p$ residue classes modulus $p$. Therefore, there must be at least one choice of $x_1,x_2,x_3\pmod p$ such that
\[
\begin{cases}
x_1-x_2+x_3^k\equiv 0\pmod p	\\
p\nmid(qx_1+a)(qx_2+a)(qx_3+1).
\end{cases}
\]
When $p=2$, it is clearly that $1,1,0\pmod 2$ is a solution to the system
\[
\begin{cases}
x_1-x_2+x_3^k\equiv 0\pmod 2\\
2\nmid(qx_1+a)(qx_2+a)(qx_3+1).
\end{cases}
\]
The conclusion $M(p)\geq1$ when $p\nmid q$ then yields that
 \[
\prod_{p\leq C\atop p\nmid q}\frac{p}{\phi(p)^3}M(p)\prod_{C\leq p\leq Q\atop p\nmid q}\bigbrac{1+A(p)}\gg\prod_{p\leq C\atop p\nmid q} p^{-2+\eps}\prod_{C\leq p\leq Q\atop p\nmid q}\bigbrac{1+A(p)}.  
\]
Taking note from (\ref{ap}) that $1+A(p)\geq 1-Cp^{-3/2+\eps}$ and that $C$ is a constant, thus,
\[
\prod_{p\leq C\atop p\nmid q} p^{-2+\eps}\prod_{C\leq p\leq Q\atop p\nmid q}\bigbrac{1+A(p)}\gg\prod_{ p\geq C}\bigbrac{1-O\bigbrac{p^{-3/2+\eps}}}\gg1.
\]

\section{Proof of Lemma \ref{minorsd}}

For one thing, given Dirichlet approximate theorem, for any frequency $\alpha\in\T$, there always exists an integer $q\leq N'/Q$ such that $\norm{q\alpha}\ll Q/N'$. Thus, when $\alpha$ belongs to minor arcs $\minor$, we must have
\begin{align}\label{q}
Q\leq q\leq N'/Q.
\end{align}

For another, from the proof of Lemma \ref{major lemma}, it is clear that
\[
S_d(\alpha)=\sum_{y\in[dM+1]\atop y\equiv 1\pmod d}k\frac{\phi(d)}{d}\Bigbrac{\frac{y-1}{d}}^{k-1}\Lambda(y)e\Bigbrac{\alpha\Bigbrac{\frac{y-1}{d}}^k}.
\]
After making use of Abel's summation formula, we'll get
\begin{multline*}
S_d(\alpha)=	\frac{\phi(d)}{d}kM^{k-1}\sum_{y\in[dM+1]\atop y\equiv 1\pmod d}\Lambda(y)e\Bigbrac{\alpha\Bigbrac{\frac{y-1}{d}}^k}\\
-\int_1^{dM+1}\biggbrac{\sum_{y\in[t]\atop y\equiv 1\pmod d}\Lambda(y)e\Bigbrac{\alpha\Bigbrac{\frac{y-1}{d}}^k}}\cdot\biggbrac{\frac{\phi(d)}{d}k\Bigbrac{\frac{t-1}{d}}^{k-1}}^\prime\rd t.
\end{multline*}
From the perspecitive of  Brun\,-Titchmarsh theorem (\cite[Theorem 2]{MV}) it is an easy matter to verify that
\[
\biggabs{\sum_{y\in[t]\atop y\equiv 1\pmod d}\Lambda(y)e\Bigbrac{\alpha\Bigbrac{\frac{y-1}{d}}^k}}\leq\pi(t;d,1)\log t \ll\frac{t\log t}{\phi(d)},
\]
whilst derivation gives the upper bound
\[
\biggbrac{\frac{\phi(d)}{d}k\Bigbrac{\frac{t-1}{d}}^{k-1}}^\prime\ll\frac{\phi(d)}{d^k}t^{k-2}.
\]
Combining the above two inequalities it is not hard to find that the integral over the interval $[1,dM(\log N')^{-2D}]$ is negligible, in practice,
\begin{multline*}
\left|\int_1^{dM(\log N')^{-2D}}\biggbrac{\sum_{y\in[t]\atop y\equiv 1\pmod d}\Lambda(y)e\Bigbrac{\alpha\Bigbrac{\frac{y-1}{d}}^k}}\cdot\biggbrac{\frac{\phi(d)}{d}k\Bigbrac{\frac{t-1}{d}}^{k-1}}^\prime\rd t\right|\\
\ll d^{-k}\int_1^{dM(\log N')^{-2D}}t^{k-1}\log t\rd t\ll N'(\log N')^{-D},
\end{multline*}
on recalling that $N'\asymp M^k$. Therefore,  $S_d(\alpha)$ is bounded by
\begin{multline}\label{11}
\frac{\phi(d)}{d}M^{k-1}\sup_{dM(\log N')^{-2D}\leq t \leq dM+1}\biggabs{\sum_{y\in[t]\atop y\equiv 1\pmod d}\Lambda(y)e\Bigbrac{\alpha\Bigbrac{\frac{y-1}{d}}^k}}+O\bigbrac{N'(\log N')^{-D}}\\
\leq \frac{\phi(d)}{d} M^{k-1}\biggset{\biggabs{\sum_{y\leq \frac{dM\log^{-2D}N'}{2}\atop y\equiv 1\pmod d}\Lambda(y)e\Bigbrac{\alpha\Bigbrac{\frac{y-1}{d}}^k}}\\
+\sup_{dM(\log N')^{-2D}\leq t \leq dM+1}\biggabs{\sum_{\frac{dM\log^{-2D}N'}{2}<y\leq t\atop y\equiv 1\pmod d}\Lambda(y)e\Bigbrac{\alpha\Bigbrac{\frac{y-1}{d}}^k}}} +O\bigbrac{N'(\log N')^{-D}}.
\end{multline}
It is a fairly immediate consequence of Brun\,-Titchmarsh theorem that the first term is bounded by $O\bigbrac{N'\log^{-2D}N'}$. We then  decompose the second term  into dyadic intervals to get that
\[
\biggabs{\sum_{\frac{dM\log^{-2D}N'}{2}<y\leq t\atop y\equiv 1\pmod d}\Lambda(y)e\Bigbrac{\alpha\Bigbrac{\frac{y-1}{d}}^k}}\ll\log N'\biggabs{\sum_{y\sim T\atop y\equiv1\pmod d}\Lambda(y)e\Bigbrac{\alpha\Bigbrac{\frac{y-1}{d}}^k}}
\]
with  $dM(\log N')^{-2D}/2\leq T \leq dM/2$.   It can be deduced from Vaughan's identity with parameter $U=\Bigbrac{\frac{T}{d}}^{\frac{1}{2^k}}$ that the inner sum of the above right-hand side is equal to
\begin{multline*}
\threesum{uv\sim T}{uv\equiv 1\pmod d}{u\leq U}\mu(u)(\log v)e\Bigbrac{\alpha\Bigbrac{\frac{uv-1}{d}}^k}
-\threesum{s\leq U^2}{sw\sim T}{sw\equiv 1\pmod d}a(s)e\Bigbrac{\alpha\Bigbrac{\frac{sw-1}{d}}^k}\\+\threesum{sv\sim T}{s,v>U}{sv\equiv1\pmod d}b(s)\Lambda(v) e\Bigbrac{\alpha\Bigbrac{\frac{sv-1}{d}}^k}:=I_1-I_2+I_3,	
\end{multline*}
where $a(s)=\sum_{uv=s\atop u,v\leq U}\mu(u)\Lambda(v)$ and $b(s)=\sum_{uw=s\atop u>U}\mu(u)$ are two arithmetic functions.

 We now estimate $I_1$, $I_2$ and $I_3$ in turns. By decomposing the interval $u\leq U$ in $I_1$ into dyadic intervals $u\sim U_i$ and $U_i\leq U/2$,  rescaling $U$ by $2U$ we may conclude that
\[
|I_1|\ll\log U\log T\sum_{u\sim U\atop (u,d)=1}\sup_{1\leq z\leq \frac{2T}{u}}\biggabs{\sum_{\max\set{z,\frac{T}{u}}<v\leq\frac{2T}{u}\atop v\equiv u^{-1}\mod d}e\Bigbrac{\alpha\Bigbrac{\frac{uv-1}{d}}^k}}.
\]
Let $u^{-1}$ be the inverse of $u$ modulus $d$, that is $uu^{-1}\equiv1\mod d$ (so does $s^{-1}$,  $v^{-1}$ and so on). Let $c_d$ be the integer satisfying $uu^{-1}-1=dc_d$. Taking $v=u^{-1}+dv'$ with $\max\set{\frac{z}{d},\frac{T}{ud}}<v'\leq\frac{2T}{ud}$, it is immediate that
\[
|I_1|\ll\frac{T\log U\log T}{d}\E_{u\sim U} \sup_{1\leq z\leq \frac{2T}{u}} \biggabs{\E_{\max\set{\frac{z}{d},\frac{T}{ud}}<v'\leq\frac{2T}{ud}}e(\alpha(uv'+c_d)^k)},
\]
 we introduce the symbol of averaged sum here, which is defined to be $\E_{x\in A}f(x)=\frac{1}{|A|}\sum_{x\in A}f(x)$ whenever $A$ is a finite set. H\"older's inequality then implies that
\begin{multline*}
\Bigbrac{\frac{d|I_1|}{T\log U\log T}}^{2^{k-1}}\ll\E_{u\sim U}\sup_{1\leq z\leq \frac{2T}{u}}\biggabs{ \E_{\max\set{\frac{z}{d},\frac{T}{ud}}<v'\leq\frac{2T}{ud}}e(\alpha(uv'+c_d)^k) }^{2^{k-1}}\\
\ll\E_{u\sim U}	 \sup_{1\leq z\leq \frac{2T}{u}} \E_{\max\set{\frac{z}{d},\frac{T}{ud}}<v'\leq\frac{2T}{ud}} \E_{|v_1|\cdots,|v_{k-1|\leq \frac{2T}{ud}}}\\e(\bigbrac{k!v'v_1\cdots v_{k-1}+P(v_1,\cdots,v_{k-1})}u^k\alpha+P_{k-1}(u)\alpha),
\end{multline*}
where $P_{k-1}(u)$ is a polynomial of degree at most $k-1$ with respect to the variable $u$.
Switching the order of averages, another application of H\"older's inequality yields that
\begin{multline*}
\Bigbrac{\frac{d|I_1|}{T\log U\log T}}^{2^{2k-2}}\ll\\
\E_{v'\leq\frac{2T}{Ud}}\E_{|v_1|\cdots,|v_{k-1}|\leq \frac{2T}{Ud}}\E_{|u_1|,\cdots,|u_{k-1}|\leq 2U}\Bigabs{\E_{u\sim U}e(k!^2v'v_1\cdots v_{k-1}u_1\cdots u_{k-1}u\alpha)}.
\end{multline*}
If we set $m=k!^2v'v_1\cdots v_{k-1}u_1\cdots u_{k-1}$, then $m\ll\frac{T^k}{Ud^k}$ and Cauchy-Schwarz inequality shows that the above inequality is bounded by
\begin{multline*}
\Bigbrac{\frac{d|I_1|}{T\log U\log T}}^{2^{2k-2}}\ll(\frac{d}{T})^k\sum_{m\ll\frac{T^k}{Ud^k}}\tau_{3k}(m)\Bigabs{\sum_{u\sim U}e(mu\alpha)}\\
\ll (\frac{d}{T})^k \Bigbrac{\sum_{m\ll\frac{T^k}{Ud^k}}\tau_{3k}^2(m)}^{1/2}\Bigbrac{U\sum_{m\ll\frac{T^k}{Ud^k}}\bigabs{\sum_{u\sim U}e(mu\alpha)}}^{1/2},
\end{multline*}
where $\tau_k$ is the $k$-fold divisor function.
On employing \cite[Lemma 1]{Gr05b} and  the estimate $\sum_{n\leq N}\tau_k^2(n)\ll N\log^{2k-1}N$ which is an immediate conclusion of Shiu's bound \cite[Theorem 1]{Shiu}, one has
\begin{align*}
\Bigbrac{\frac{d|I_1|}{T\log U\log T}}^{2^{2k-2}}&\ll(d/T)^k	\bigbrac{\frac{T^k}{Ud^k}\log^{6k-1}T}^{1/2}U^{1/2}\bigbrac{\frac{T^k}{qd^k}+U+(\frac{T^k}{Ud^k}+q)\log q}^{1/2}\\
&\ll\log^{3k}N'\bigbrac{\frac{1}{q}+\frac{Ud^k}{T^k}+\frac{1}{U}+\frac{qd^k}{T^k}}^{1/2}.
\end{align*}
In view of (\ref{q}) and (\ref{deq}) we can conclude that $|I_1|\ll\frac{T}{d}\log^{-D}N'$. Similarly, $|I_2|\ll\frac{T}{d}\log^{-D}N'$.

As for $I_3$, a dyadic argument gives us that the following inequality holds uniformly for all $U\leq L\leq\frac{2T}{U}$ 
\begin{multline*}
|I_3|\ll\log T\biggabs{\sum_{s\sim L\atop(s,d)=1}b(s)\sum_{\max\set{\frac{T}{s}, U}<v\leq\frac{2T}{s}\atop v\equiv s^{-1}\pmod d}\Lambda(v)e\Bigbrac{\alpha\Bigbrac{\frac{sv-1}{d}}^k}}\\
\ll\log T\biggabs{ \sum_{s\sim L\atop(s,d)=1}b(s) \sum_{\max\set{\frac{T}{sd},\frac{U}{d}}<v\leq\frac{2T}{sd}}\Lambda(s^{-1}+dw)e(\alpha(su+c_d)^k)}\\
\ll\frac{T\log T}{d}\E_{s\sim L}b(s)\Bigabs{\E_{\max\set{\frac{T}{sd},\frac{U}{d}}<v\leq\frac{2T}{sd}}\Lambda(s^{-1}+dw)e(\alpha(su+c_d)^k)}.
\end{multline*}
Noting that $|b(s)|\leq\tau_2(s)$ pointwise, it follows from Cauchy-Schwarz inequality that 
\begin{multline*}
\bigbrac{\frac{d|I_3|}{T\log T}}^2\ll\E_{s\sim L}b^2(s)\E_{s\sim L}\Bigabs{\E_{\max\set{\frac{T}{sd},\frac{U}{d}}<v\leq\frac{2T}{sd}}\Lambda(s^{-1}+dw)e(\alpha(su+c_d)^k)}^2\\
\ll \log^3L\, \E_{v\leq\frac{2T}{Ld}}\E_{|h|\leq\frac{2T}{Ld}} \Lambda(s^{-1}+dv) \Lambda(s^{-1}+d(v+h))\bigabs{\E_{s\sim L}e\bigbrac{\alpha(s(v+h)+c_d)^k-\alpha(sv+c_d)^k}}\\
\ll\log^3L\log^2T\,\E_{v\leq\frac{2T}{Ld}}\E_{|h|\leq\frac{2T}{Ld}}\bigabs{\E_{s\sim L}e\bigbrac{(kv^{k-1}h+P_{k-2}(v))s^k\alpha+P_{k-1}(s)}\alpha},
\end{multline*}
here, $P_{k-2}(v)$ is a polynomial of degree at most $k-2$ with respect to $v$ and $P_{k-1}(s)$ is a polynomial of degree at most $k-1$ with respect to $s$. As in the calculation of $I_1$, we can linearize the variables $s$ and $v$ by repeatedly applying  Cauchy-Schwarz inequality to conclude that
\[
|I_3|\ll\frac{T}{d}\log^{-D}N'.
\]
The proof completes by noting that $T$ varies in the range of $dM(\log N')^{-2D}/2\leq T \leq dM/2$.

\section{Conflict of interest statement}
On behalf of all authors, the corresponding author states that there is no conflict of interest.

\vspace{8mm}

\end{document}